\documentclass[11pt, letterpaper]{article}

\usepackage{amssymb,amsmath,amsfonts}
\usepackage{hyperref}
\usepackage{graphicx,color}
\usepackage{calrsfs}		
\usepackage{amsthm}	
\usepackage[round]{natbib}

\usepackage{comment}

\usepackage{geometry}

\addtolength{\textwidth}{2cm}
\addtolength{\hoffset}{-1.0cm}
\addtolength{\voffset}{-1.5cm}
\addtolength{\textheight}{3.05cm}


\flushbottom

\usepackage{enumitem}


\newcommand{\lc}{[\![}
\newcommand{\rc}{]\!]}
\newcommand{\1}[1]{{\boldsymbol 1_{\{#1\}}}}
\newcommand{\oo}{\boldsymbol 1}

\newcommand{\E}{\mathbb{E}}
\newcommand{\F}{\mathbb{F}}
\newcommand{\G}{\mathbb{G}}
\newcommand{\Hb}{\mathbb{H}}
\newcommand{\N}{\mathbb{N}}
\renewcommand{\P}{\mathbb{P}}
\newcommand{\Q}{\mathbb{Q}}
\newcommand{\R}{\mathbb{R}}
\newcommand{\dd}{\mathrm{d}}

\newcommand{\Ecal}{{\mathcal E}}
\newcommand{\Fcal}{{\mathcal F}}
\newcommand{\Gcal}{{\mathcal G}}

\newcommand{\Lcal}{{\mathcal L}}
\newcommand{\Tcal}{{\mathcal T}}

\theoremstyle{plain}
\newtheorem{theorem}{Theorem}
\newtheorem{corollary}[theorem]{Corollary}
\newtheorem{assumption}[theorem]{Assumption}
\newtheorem{lemma}[theorem]{Lemma}

\theoremstyle{definition}
\newtheorem{definition}[theorem]{Definition}

\theoremstyle{definition}
\newtheorem{remark}[theorem]{Remark}
\newtheorem{important remark}[theorem]{Important remark}
\newtheorem{example}[theorem]{Example}

\numberwithin{equation}{section}
\numberwithin{theorem}{section}

\allowdisplaybreaks

\begin{document}

\title{Convergence of local supermartingales and Novikov-Kazamaki type conditions for processes with jumps\thanks{We thank Tilmann Bl\"ummel, Pavel Chigansky, Sam Cohen, Christoph Czichowsky, Freddy Delbaen, Moritz D\"umbgen, Hardy Hulley, Jan Kallsen, Ioannis Karatzas, Kostas Kardaras,  Kasper Larsen,  and Nicolas Perkowski for discussions on the subject matter of this paper. We are indebted to Dominique L\'epingle for sending us the paper \citet{Lepingle_Memin_integrabilite}. M.L.~acknowledges funding from the European Research Council under the European Union's Seventh Framework Programme (FP/2007-2013) / ERC Grant Agreement n.~307465-POLYTE. J.R.~acknowledges generous support from the Oxford-Man Institute of Quantitative Finance, University of Oxford, where a major part of this work was completed.
}}
\author{Martin Larsson\thanks{Department of Mathematics, ETH Zurich, R\"amistrasse 101, CH-8092, Zurich, Switzerland. E-mail: martin.larsson@math.ethz.ch} \and
Johannes Ruf\thanks{Department of Mathematics, University College London, Gower Street, London WC1E 6BT, United Kingdom. E-mail:     j.ruf@ucl.ac.uk}}

\date{November 23, 2014}

\maketitle

\begin{abstract}
We characterize the event of convergence of a local supermartingale. Conditions are given in terms of its predictable characteristics and quadratic variation. The notion of extended local integrability plays a key role.  We then apply these characterizations to provide a novel proof for the sufficiency and necessity of Novikov-Kazamaki type conditions for the martingale property of nonnegative local martingales with jumps.

{\bf Keywords:} Supermartingale convergence, extended localization, stochastic exponential, local martingale, uniform integrability, Novikov-Kazamaki conditions, F\"ollmer measure.

{\bf MSC2010 subject classification:} Primary 60G07; secondary: 60G17, 60G30, 60G44.
\end{abstract}

\section{Introduction}

Among the most fundamental results in the theory of martingales are the martingale and supermartingale convergence theorems of \citet{Doob:1953}. One of Doob's results states that if $X$ is a nonnegative supermartingale, then $\lim_{t\to\infty}X_t$ exists almost surely. If $X$ is not nonnegative, or more generally fails to satisfy suitable integrability conditions, then the limit need not exist, or may only exist with some probability. One is therefore naturally led to search for convenient characterizations of the event of convergence $D=\{\lim_{t\to\infty}X_t \text{ exists in }\R\}$. An archetypical example of such a characterization arises from the Dambis-Dubins-Schwarz theorem: if~$X$ is a continuous local martingale, then $D=\{[X,X]_{\infty-}<\infty\}$ almost surely. This equality fails in general, however, if $X$ is not continuous, in which case it is natural to ask for a description of how the two events differ. The first main goal of the present paper is to address questions of this type: how can one describe the event of convergence of a process $X$, as well as of various related processes of interest? We do this in the setting where $X$ is a {\em local supermartingale on a stochastic interval $\lc0,\tau\lc$}, where $\tau$ is a foretellable time. (Precise definitions are given below, but we remark already here that every predictable time is foretellable.)

While the continuous case is relatively simple, the general case offers a much wider range of phenomena. For instance, there exist locally bounded martingales $X$ for which both $\lim_{t\to \infty}X_t$ exists in $\R$ and $[X,X]_{\infty-}=\infty$, or for which $\liminf_{t\to\infty}X_t=-\infty$, $\limsup_{t\to\infty}X_t=\infty$, and $[X,X]_{\infty-}<\infty$ hold simultaneously almost surely. We provide a large number of examples of this type. To tame this disparate behavior, some form of restriction on the jump sizes is needed. The correct additional property is that of {\em extended local integrability}, which is a modification of the usual notion of local integrability.

Our original motivation for considering questions of convergence came from the study of Novikov-Kazamaki type conditions for a nonnegative local martingale $Z=\Ecal(M)$ to be a uniformly integrable martingale. Here $\Ecal(\cdot)$ denotes the stochastic exponential and $M$ is a local martingale. This problem was originally posed by \citet{Girsanov_1960}, and is of great importance in a variety of applications, for example in mathematical finance, where $Z$ corresponds to the Radon-Nikodym density process of a so-called risk-neutral measure. Early sufficient conditions were obtained by \citet{Gikhman_Skorohod_1972} and \citet{Lip_Shir_1972}. An important milestone is due to \cite{Novikov} who proved that if $M$ is continuous, then $\E[e^{\frac{1}{2}[M,M]_\infty}]<\infty$ implies that $Z$ is a uniformly integrable martingale. \cite{Kazamaki_1977} and \citet{Kazamaki_1983} later proved that $\sup_\sigma \E[e^{\frac{1}{2}M_\sigma}]<\infty$ is in fact sufficient, where the supremum is taken over all bounded stopping times~$\sigma$. These results have been generalized in a number of ways. The general case where $M$ may exhibit jumps has been considered by \cite{Novikov:1975}, \citet{Lepingle_Memin_integrabilite,Lepingle_Memin_Sur}, \citet{Okada_1982}, \citet{Yan_1982}, \citet{Kallsen_Shir}, \citet{Protter_Shimbo},  \citet{Sokol2013_optimal},  and \citet{GlauGrbac_2014}, among others. Approaches related to the one we present here can be found in \cite{Kabanov/Liptser/Shiryaev:1979,Kabanov/Liptser/Shiryaev:1980}, \citet{CFY}, \citet{KMK2010},  \citet{Mayerhofer_2011}, \citet{MU_martingale}, \citet{Ruf_Novikov, Ruf_martingale}, \citet{Blanchet_Ruf_2012}, \citet{Klebaner:2014}, and \citet{Hulley_Ruf:2015}, among others.

Let us indicate how questions of convergence arise naturally in this context, assuming for simplicity that $M$ is continuous and $Z$ strictly positive, which is the situation studied by~\cite{Ruf_Novikov}. For any bounded stopping time $\sigma$ we have
\[
\E_\P\left[ e^{ \frac{1}{2} [M,M]_\sigma } \right] = \E_\P\left[Z_\sigma e^{ -M_\sigma + [M,M]_\sigma} \right].
\]
While {\em a priori} $Z$ need not be a uniformly integrable martingale, one can still find a probability measure~$\Q$, sometimes called the {\em F\"ollmer measure}, under which $Z$ may explode, say at time $\tau_\infty$, and such that $\dd \Q / \dd \P |_{\mathcal F_\sigma} = Z_\sigma$ holds for any bounded stopping time $\sigma<\tau_\infty$. For such stopping times,
\[
\E_\P\left[ e^{ \frac{1}{2} [M,M]_\sigma } \right] = \E_\Q\left[ e^{N_\sigma} \right],
\]
where $N=-M+[M,M]$ is a local $\Q$--martingale on $\lc0,\tau_\infty\lc$. The key point is that  $Z$ is a uniformly integrable martingale under $\P$ if and only if $\Q(\lim_{t\to\tau_\infty}N_t\text{ exists in }\R)=1$. The role of Novikov's condition is to guarantee that the latter holds. In the continuous case there is not much more to say; it is the extension of this methodology to the general jump case that requires more sophisticated convergence criteria for the process $X=N$, as well as for certain related processes. Moreover, the fact that $\tau_\infty$ may {\em a priori} be finite explains why we explicitly allow $X$ to be defined on a stochastic interval when we develop the theory. Our convergence results allow us to give simple and transparent proofs of most Novikov-Kazamaki type conditions that are available in the literature. We are also led to necessary and sufficient conditions of this type, yielding improvements of existing criteria, even in the continuous case.

The rest of the paper is organized as follows. Section~\ref{S:prelim} contains notational conventions and mathematical preliminaries. Section~\ref{S:EL} introduces the notion of extended localization and establishes some general properties. Our main convergence theorems and a number of corollaries are given in Section~\ref{S:convergence}. Section~\ref{S:NK} is devoted to Novikov-Kazamaki type conditions. Section~\ref{S:examp} contains counterexamples illustrating the sharpness of the results obtained in Sections~\ref{S:convergence} and~\ref{S:NK}. Auxiliary material is developed in the appendices: Appendix~\ref{A:SE} reviews stochastic exponentials of semimartingales on stochastic intervals. Appendix~\ref{S:follmer} reviews the F\"ollmer measure associated with a nonnegative local martingales. Appendix~\ref{App:Z} characterizes extended local integrability under the F\"ollmer measure. Finally, Appendix~\ref{app:embed} discusses a path space embedding needed to justify our use of the F\"ollmer measure in full generality.

\section{Notation and preliminaries} \label{S:prelim}

In this section we establish some basic notation that will be used throughout the paper. For further details and definitions the reader is referred to \citet{JacodS}.

We work on a stochastic basis $(\Omega,\mathcal F, \mathbb F, \P)$ where $\mathbb F=(\Fcal_t)_{t\ge0}$ is a right-continuous filtration, not necessarily augmented by the $\P$--nullsets.  Given a c\`adl\`ag process $X=(X_t)_{t\ge0}$ we write $X_-$ for its left limits and $\Delta X=X-X_-$ for its jump process, using the convention $X_{0-}=X_0$.  The jump measure of $X$ is denoted by $\mu^X$, and its compensator by $\nu^X$. We let $X^\tau$ denote the process stopped at a stopping time~$\tau$. If $X$ is a semimartingale, $X^c$ denotes its continuous local martingale part, and $H\cdot X$ is the stochastic integral of an $X$--integrable process $H$ with respect to $X$. The stochastic integral of a predictable function $F$ with respect to a random measure $\mu$ is denoted $F*\mu$. For two stopping times $\sigma$ and $\tau$, the stochastic interval $\lc\sigma,\tau\lc$ is the set
\[
\lc\sigma,\tau\lc = \{ (\omega,t)\in\Omega\times\R_+ : \sigma(\omega)\le t<\tau(\omega)\}.
\]
Stochastic intervals such as $\rc\sigma,\tau\rc$ are defined analogously. Note that all stochastic intervals are disjoint from $\Omega\times\{\infty\}$.

A process on a stochastic interval $\lc0,\tau\lc$, where $\tau$ is a stopping time, is a measurable map $X:\lc0,\tau\lc\to\overline\R$. If $\tau'\le\tau$ is another stopping time, we may view $X$ as a process on $\lc0,\tau'\lc$ by considering its restriction to that set; this is often done without explicit mentioning. We say that $X$ is optional (predictable, progressive) if it is the restriction to $\lc0,\tau\lc$ of an optional (predictable, progressive) process. In this paper, $\tau$ will be a foretellable time; that is, a $[0,\infty]$--valued stopping time that admits a nondecreasing sequence $(\tau_n)_{n \in \N}$ of stopping times with $\tau_n<\tau$ almost surely for all $n \in \N$ on the event $\{\tau>0\}$ and $\lim_{n \to \infty} \tau_n = \tau$ almost surely. Such a sequence is called an announcing sequence. We view the stopped process $X^{\tau_n}$ as a process on $\lc0,\infty\lc$ by setting $X_t=X_{\tau_n}\1{\tau_n<\tau}$ for all $t\ge\tau_n$.

If $\tau$ is a foretellable time and $X$ is a process on $\lc0,\tau\lc$, we say that $X$ is a semimartingale on $\lc0,\tau\lc$ if there exists an announcing sequence $(\tau_n)_{n\in \N}$ for $\tau$ such that $X^{\tau_n}$ is a semimartingale for each $n \in \N$. Local martingales and local supermartingales on $\lc0,\tau\lc$ are defined analogously. Basic notions for semimartingales carry over by localization to semimartingales on stochastic intervals. For instance, if $X$ is a semimartingale on $\lc0,\tau\lc$, its quadratic variation process $[X,X]$ is defined as the process on $\lc0,\tau\lc$ that satisfies $[X,X]^{\tau_n} = [X^{\tau_n}, X^{\tau_n}]$ for each $n \in \N$. Its jump measure~$\mu^X$ and compensator~$\nu^X$ are defined analogously, as are stochastic integrals with respect to $X$ (or $\mu^X$, $\nu^X$, $\mu^X-\nu^X$). In particular, $H$ is called $X$--integrable if it is $X^{\tau_n}$--integrable for each $n\in\N$, and $H\cdot X$ is defined as the semimartingale on $\lc 0, \tau\lc$ that satisfies $(H \cdot X)^{\tau_n} = H \cdot X^{\tau_n}$ for each $n \in \N$. Similarly, $G_{\rm loc}(\mu^X)$ denotes the set of predictable functions $F$ for which the compensated integral $F*(\mu^{X^{\tau_n}}-\nu^{X^{\tau_n}})$ is defined for each $n\in\N$ (see Definition~II.1.27 in~\cite{JacodS}), and $F*(\mu^X-\nu^X)$ is the semimartingale on $\lc0,\tau\lc$ that satisfies $(F*(\mu^X-\nu^X))^{\tau_n}=F*(\mu^{X^{\tau_n}}-\nu^{X^{\tau_n}})$ for all $n\in\N$. One easily verifies that all these notions are independent of the particular sequence $(\tau_n)_{n\in\N}$. We refer to \citet{Maisonneuve1977}, \citet{Jacod_book}, and Appendix~A in~\citet{CFR2011} for further details on local martingales on stochastic intervals.

Since we do not require $\Fcal$ to contain all $\P$--nullsets, we may run into measurability problems with quantities like $\sup_{t<\tau}X_t$ for an optional (predictable, progressive) process $X$ on $\lc0,\tau\lc$. However, the left-continuous process $\sup_{t<\cdot}X_t$ is adapted to the $\P$--augmentation $\overline\F$ of $\F$; see the proof of Theorem~IV.33 in~\cite{Dellacherie/Meyer:1978}. Hence it is $\overline \F$--predictable, so we can find an $\F$--predictable process $U$ that is indistinguishable from it; see Lemma~7 in Appendix~1 of~\cite{Dellacherie/Meyer:1982}. Thus the process $V=U\vee X$ is $\F$-optional (predictable, progressive) and indistinguishable from $\sup_{t\le\cdot}X_t$. When writing the latter, we always refer to the indistinguishable process $V$.

We define the set
\[
\Tcal = \{ \tau : \text{ $\tau$ is a bounded stopping time} \}.
\]
Finally, we emphasize the convention $Y(\omega)\oo_A(\omega)=0$ for all (possibly infinite-valued) random variables~$Y$, events $A\in\Fcal$, and $\omega\in\Omega\setminus A$.

\section{The notion of extended localization} \label{S:EL}
The following strengthening of the notion of local integrability and boundedness turns out to be very useful. It is a mild variation of the notion of $\gamma$-localization introduced by~\cite{CS:2005}, see also \cite{Stricker_1981}.

\begin{definition}[Extended locally integrable / bounded]  \label{D:extended}
Let $\tau$ be a foretellable time and $X$ a progressive process on $\lc0,\tau\lc$. Let $D\in\Fcal$. We call $X$ {\em extended locally integrable on $D$} if there exists a nondecreasing sequence $(\tau_n)_{n\in\N}$ of stopping times as well as a sequence $(\Theta_n)_{n \in \N}$ of integrable random variables such that the following two conditions hold almost surely:\begin{enumerate}
\item $\sup_{t\ge0} |X^{\tau_n}_t|\le\Theta_n$ for each $n\in\N$.
\item\label{D:extended:ii} $D\subset\bigcup_{n\in\N}\{\tau_n \geq \tau\}$.
\end{enumerate}
If $D=\Omega$, we simply say that $X$ is {\em extended locally integrable}. Similarly, we call $X$ {\em extended locally bounded (on $D$)} if $\Theta_n$ can be taken deterministic for each $n \in \N$. \qed
\end{definition}

Extended localization naturally suggests itself when one deals with questions of convergence. The reason is the simple inclusion $D\subset\bigcup_{n\in\N}\{X=X^{\tau_n}\}$, where $D$ and $(\tau_n)_{n \in \N}$ are as in Definition~\ref{D:extended}. This inclusion shows that to prove that $X$ converges on $D$, it suffices to prove that each $X^{\tau_n}$ converges on $D$. If $X$ is extended locally integrable on $D$, one may thus assume when proving such results that $X$ is in fact uniformly bounded by an integrable random variable. This extended localization procedure will be used repeatedly throughout the paper.

It is clear that a process is extended locally integrable if it is extended locally bounded.  We now provide some further observations on this strenghtened notion of localization.

\begin{lemma}[Properties of extended localization]  \label{L:ELI}
Let $\tau$ be a foretellable time, $D\in\Fcal$, and $X$ a process on $\lc0,\tau\lc$.
\begin{enumerate}
\item\label{L:ELI:1} If $X=X'+X''$, where $X'$ and $X''$ are extended locally integrable (bounded) on $D$, then $X$ is extended locally integrable (bounded) on $D$.
\item\label{L:ELI:2} If there exists a nondecreasing sequence $(\tau_n)_{n \in \N}$ of stopping times with $D\subset\bigcup_{n\in\N}\{\tau_n \geq \tau\}$ such that $X^{\tau_n}$ is extended locally integrable (bounded) on $D$ for each $n \in \N$, then $X$ is extended locally integrable (bounded) on $D$. In words, an extended locally extended locally integrable (bounded) process is extended locally integrable (bounded).
\item\label{L:ELI:3} Suppose $X$ is c\`adl\`ag adapted. Then $\sup_{t<\tau}|X_t|<\infty$ on $D$ and $\Delta X$ is extended locally integrable (bounded) on $D$ if and only if $X$ is extended locally integrable (bounded) on $D$.
\item\label{L:ELI:5} Suppose $X$ is c\`adl\`ag adapted. Then $x \oo_{x > 1} * \mu^X$ is extended locally integrable on $D$ if and only if $x \oo_{x > 1} * \nu^X$ is extended locally integrable on $D$.  Any of these two conditions imply that $(\Delta X)^+$ is extended locally integrable on $D$.
\item\label{L:ELI:6} Suppose $X$ is optional. If $\sup_{\sigma\in\Tcal}\E[|X_\sigma|\1{\sigma<\tau}]<\infty$ then $X$ is extended locally integrable.
\item\label{L:ELI:4} Suppose $X$ is predictable. Then  $\sup_{t<\tau}|X_t|<\infty$ on $D$ if and only if $X$ is extended locally bounded on $D$ if and only if $X$ is extended locally integrable on $D$.
\end{enumerate}
\end{lemma}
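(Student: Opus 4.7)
The plan is to address the six items in order: items~\ref{L:ELI:1} and~\ref{L:ELI:2} by routine stopping-time manipulations, the trivial directions of~\ref{L:ELI:3} and~\ref{L:ELI:4} by direct unwinding of the definitions, and the remaining directions together with~\ref{L:ELI:5} and~\ref{L:ELI:6} by combining a carefully chosen localizing sequence with either compensator theory or the optional section theorem. Throughout, the arguments handle the bounded and integrable versions identically, with $\Theta_n$ read as constant or integrable according to context.

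For~\ref{L:ELI:1}, if $(\tau'_n,\Theta'_n)$ and $(\tau''_n,\Theta''_n)$ localize $X'$ and $X''$ respectively, then $\rho_n=\tau'_n\wedge\tau''_n$ is nondecreasing, $\{\tau'_n\geq\tau\}\cap\{\tau''_n\geq\tau\}\subset\{\rho_n\geq\tau\}$ yields the covering, and $\Theta'_n+\Theta''_n$ dominates $|X^{\rho_n}|$. Part~\ref{L:ELI:2} follows by diagonalizing the doubly-indexed family coming from the localizations of each~$X^{\tau_n}$. The easy directions of~\ref{L:ELI:3} and~\ref{L:ELI:4} are immediate from the definition: a localizing pair for~$X$ gives both $\sup_{t<\tau}|X_t|\leq\Theta_n<\infty$ on $D\cap\{\tau_n\geq\tau\}$ and the localizing bound $|\Delta X^{\tau_n}|\leq 2\Theta_n$.

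For the nontrivial direction of~\ref{L:ELI:3}, I would set $\rho_k=\inf\{t:|X_t|>k\}$ and $\tau_n=\rho_n\wedge\sigma_n$, where $(\sigma_n,\Theta_n)$ localizes~$\Delta X$. Then $|X_{t-}|\leq n$ for $t\leq\tau_n$ by the definition of~$\rho_n$, hence $|X_t|\leq n+|\Delta X_t|\leq n+\Theta_n$; the covering $D\subset\bigcup_n\{\tau_n\geq\tau\}$ follows because on $D$ both $\rho_n\geq\tau$ and $\sigma_n\geq\tau$ hold for $n$ large. For~\ref{L:ELI:5}, writing $A=x\1{x>1}*\mu^X$ and $B=x\1{x>1}*\nu^X$, the identity $\E[A_\sigma]=\E[B_\sigma]$ valid for every stopping time~$\sigma$ (standard compensator theory) shows that if $(\tau_n,\Theta_n)$ localizes~$A$, then $\E[B_{\tau_n}]=\E[A_{\tau_n}]\leq\E[\Theta_n]<\infty$, so $\Theta'_n=B_{\tau_n}$ is an integrable envelope for~$B$; the reverse direction is symmetric. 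The bound for $(\Delta X)^+$ comes from observing that on $\{t\leq\tau_n\}$ any jump exceeding~$1$ is majorized by $A_{\tau_n}\leq\Theta_n$ and smaller jumps contribute at most~$1$, yielding envelope $1+\Theta_n$.

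The main effort, which I expect to be the main obstacle, is in~\ref{L:ELI:6}. The nontrivial step here is the a priori bound $\sup_{t<\tau}|X_t|<\infty$ almost surely. To establish it, I would apply the optional section theorem to the optional set $\{(\omega,t):t<\tau(\omega),\ |X_t(\omega)|>n\}$, extracting for each $n$ a stopping time $\sigma_n$ whose graph lies in this set and with $\P(\sigma_n<\infty)\geq\P(\sup_{t<\tau}|X_t|=\infty)-\varepsilon$; feeding $\sigma_n\wedge N\in\Tcal$ into the hypothesis and letting $N\to\infty$ via Fatou gives $n(\P(\sup_{t<\tau}|X_t|=\infty)-\varepsilon)\leq c$, forcing $\P(\sup_{t<\tau}|X_t|=\infty)=0$. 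With this in hand, setting $V_t=\sup_{s\leq t}|X_s|$ as in Section~\ref{S:prelim} and $\rho_n=\inf\{t:V_t>n\}$, the covering $\bigcup_n\{\rho_n\geq\tau\}$ has full probability, the bound $\sup_t|X^{\rho_n}_t|\leq n+|X_{\rho_n}|\1{\rho_n<\tau}$ holds, and the envelope has expectation at most $n+c$ by a further Fatou argument on $\rho_n\wedge N$. Finally, for the remaining nontrivial direction of~\ref{L:ELI:4}, when $X$ is predictable I would take $\rho_n=\inf\{t:|X_t|>n\}$, which is the debut of the predictable set $\{|X|>n\}$: the definition of the debut yields $|X_t|\leq n$ for $t<\rho_n$, hence $\sup_t|X^{\rho_n}_t|\leq n$ on $\lc 0,\rho_n\wedge\tau\lc$, and the hypothesis $\sup_{t<\tau}|X_t|<\infty$ on~$D$ provides the covering.
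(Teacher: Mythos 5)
Your arguments for parts \ref{L:ELI:1}, \ref{L:ELI:3}, \ref{L:ELI:5} and \ref{L:ELI:6} follow essentially the paper's route (the minimum of the two localizing sequences; first-crossing times combined with the bounds $|X^{\tau_n}|\le n+|\Delta X_{\tau_n}|\1{\tau_n<\tau}$ and $|\Delta X^{\tau_n}|\le 2n+|X^{\tau_n}|$; the compensator identity; the optional section theorem plus a limiting argument). Part \ref{L:ELI:2} is only sketched: ``diagonalizing'' hides the one real difficulty, namely that $\tau_n\wedge\tau^{(n)}_{m_n}$ need not be nondecreasing while retaining the covering of $D$ and the integrable envelopes; the paper resolves this by choosing $m_n$ with $\P(D\cap\{\tau^{(n)}_{m_n}<\tau_n\})\le 2^{-n}$, setting $\widehat\tau_n=\tau_n\wedge(\tau^{(n)}_{m_n}\vee\widehat\tau_{n-1})$, and invoking Borel--Cantelli. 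You should make that construction explicit, but the idea is recoverable from what you wrote.

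The genuine gap is in part \ref{L:ELI:4}. You stop at the debut $\rho_n=\inf\{t:|X_t|>n\}$ and deduce that $X^{\rho_n}$ is bounded by $n$ from the inequality $|X_t|\le n$ for $t<\rho_n$. But the stopped process also carries the value $X_{\rho_n}\1{\rho_n<\tau}$, and nothing controls $|X_{\rho_n}|$: the level $n$ may be crossed by a jump of arbitrary size, and Definition~\ref{D:extended} requires $\sup_{t\ge0}|X^{\tau_n}_t|\le\Theta_n$ almost surely on all of $\Omega$, not merely on $\lc0,\rho_n\wedge\tau\lc$. Concretely, for $X=\Theta\oo_{\lc 1,\infty\lc}$ with $\Theta$ finite, unbounded and non-integrable $\Fcal_0$--measurable, one has $\rho_n=1$ on $\{|\Theta|>n\}$ and $X^{\rho_n}$ is neither bounded nor dominated by an integrable variable there, although $X$ is extended locally bounded (stop strictly before time $1$ on $\{|\Theta|>n\}$). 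Note that your argument never actually uses predictability: it would apply verbatim to any c\`adl\`ag adapted process with finite running supremum, for which the conclusion is false (a compound Poisson process with non-integrable jump distribution fails even local integrability). Predictability is exactly what allows the crossing times to be announced from strictly below, which is how the paper concludes: it passes to the left-continuous running supremum $U=\sup_{t<\cdot}|X_t|$, reduces to $\tau<\infty$ by a time change, and follows the proof of Lemma~I.3.10 in \citet{JacodS}, where the stopping occurs strictly before the (predictable) debut so that the terminal value is controlled. This step needs to be supplied; as written the proof of \ref{L:ELI:4} is incorrect.
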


\begin{proof}
The statement in \ref{L:ELI:1} follows by defining a sequence $(\tau_n)_{n \in \N}$ of stopping times by $\tau_n = \tau_n' \wedge \tau_n''$, where $(\tau_n')_{n \in \N}$ and $(\tau_n'')_{n \in \N}$ localize $X'$ and $X''$ extendedly.  For \ref{L:ELI:2}, suppose without loss of generality that $\tau_n\le\tau$ for all $n\in\N$, and let $(\tau_m^{(n)})_{m \in \N}$ localize $X^{\tau_n}$ extendedly, for each $n \in \N$.  Let $m_n$ be the smallest index such that $\P(D \cap \{\tau_{m_n}^{(n)} < \tau_n\}) \leq 2^{-n}$ for each $n \in \N$. Next, define  $\widehat\tau_0=0$ and then iteratively $\widehat\tau_n = \tau_n \wedge (\tau_{m_n}^{(n)} \vee \widehat\tau_{n-1})$ for each $n \in \N$, and check, by applying Borel-Cantelli, that the sequence $(\widehat \tau_n)_{n \in \N}$ satisfies the conditions of Definition~\ref{D:extended}.

For \ref{L:ELI:3} note that the sequence $(\tau_n)_{n \in \N}$ of crossing times, given by $\tau_n=\inf\{t \geq 0:|X_t|\ge n\}$, satisfies Definition~\ref{D:extended}\ref{D:extended:ii}. Thus, by~\ref{L:ELI:2}, it suffices to prove the statement with $X$ replaced by $X^{\tau_n}$. The equivalence then follows directly from the inequalities $|X^{\tau_n}|\le n+|\Delta X_{\tau_n}|\1{\tau_n<\tau}$ and $|\Delta X^{\tau_n}|\le 2 n+|X^{\tau_n}|$

To prove~\ref{L:ELI:5}, suppose first $x\oo_{x>1}*\mu^X$ is extended locally integrable on~$D$. In view of~\ref{L:ELI:2} we may assume by localization that it is dominated by some integrable random variable~$\Theta$, which then yields $\E[x\oo_{x>1}*\nu^X_{\tau-}]\le\E[\Theta]<\infty$. Thus $x\oo_{x>1}*\nu^X$ is dominated by the integrable random variable $x\oo_{x>1}*\nu^X_{\tau-}$, as required. For the converse direction simply interchange $\mu^X$ and $\nu^X$. The fact that $(\Delta X)^+ \leq 1 + x \oo_{x > 1} * \mu^X$ then allows us to conclude.

We now prove \ref{L:ELI:6}, supposing without loss of generality that $X\ge0$. Let $\overline\Fcal$ be the $\P$-completion of $\Fcal$, and write $\P$ also for its extension to $\overline\Fcal$. Define $C=\{\sup_{t<\tau}X_t=\infty\}\in\overline\Fcal$. We first show that $\P(C)=0$, and assume for contradiction that $\P(C)>0$. For each $n\in\N$ define the optional set $O_n=\{t<\tau \text{ and } X_t\ge n\}\subset\Omega\times\R_+$. Then $C=\bigcap_{n\in\N}\pi(O_n)$, where $\pi(O_n)\in\overline \Fcal$ is the projection of $O_n$ onto $\Omega$. The optional section theorem, see Theorem~IV.84 in~\cite{Dellacherie/Meyer:1978}, implies that for each $n\in\N$ there exists a stopping time $\sigma_n$ such that
\begin{equation} \label{eq:L:ELI:section}
\lc\sigma_n\rc\subset O_n \qquad\text{and}\qquad \P\left( \{\sigma_n=\infty\}\cap\pi(O_n) \right) \le \frac{1}{2}\,\P(C).
\end{equation}
Note that the first condition means that $\sigma_n<\tau$ and $X_{\sigma_n}\ge n$ on $\{\sigma_n<\infty\}$ for each $n \in \N$. Thus,
\[
\E[X_{m\wedge\sigma_n}\1{m\wedge\sigma_n<\tau}]
\ge\E[X_{\sigma_n}\oo_{\{\sigma_n\le m\}\cap C}]
\ge n\P(\{\sigma_n<m\}\cap C) \to n\P(\{\sigma_n<\infty\}\cap C)
\]
as $m\to\infty$ for each $n \in \N$. By hypothesis, the left-hand side is bounded by a constant $\kappa$ that does not depend on~$m \in \N$ or~$n \in \N$. Hence, using  that $C\subset\pi(O_n)$ for each $n \in \N$ as well as \eqref{eq:L:ELI:section}, we get
\[
\kappa \ge n\P(\{\sigma_n<\infty\}\cap C) \ge n\Big( \P(C) - \P(\{\sigma_n=\infty\}\cap \pi(O_n))\Big) \ge \frac{n}{2}\,\P(C).
\]
Letting $n$ tend to infinity, this yields a contradiction, proving $\P(C)=0$ as desired. Now define $\tau_n=\inf\{t \geq 0 :X_t\ge n\}$ for each $n\in\N$. By what we just proved, $\P(\bigcup_{n\in\N}\{\tau_n\ge\tau\})=1$. Furthermore, for each $n\in\N$ we have $0\le X^{\tau_n}\le n+X_{\tau_n}\1{\tau_n<\tau}$, which is integrable by assumption. Thus $X$ is extended locally integrable.

For \ref{L:ELI:4}, let $U=\sup_{t<\cdot}|X_t|$. It is clear that extended local boundedness on $D$ implies extended local integrability on $D$ implies $U_{\tau-}<\infty$ on $D$. Hence it suffices to prove that $U_{\tau-}<\infty$ on $D$ implies extended local boundedness on $D$. To this end, we may assume that $\tau < \infty$, possibly after a change of time. We now define a process $U'$ on $\lc 0, \infty \lc$ by $U' = U \oo_{\lc 0, \tau\lc} + U_{\tau-} \oo_{\lc  \tau, \infty\lc} $, and follow the proof of Lemma~I.3.10 in \citet{JacodS} to conclude.
\end{proof}

We do not know whether the implication in Lemma~\ref{L:ELI}\ref{L:ELI:6} holds if $X$ is not assumed to be optional but only progressive.

\begin{example}
	If $X$ is a uniformly integrable martingale then $X$ is extended locally integrable.  This can be seen by considering first crossing times of $|X|$, as in the proof of Lemma~\ref{L:ELI}\ref{L:ELI:3}. \qed 
\end{example}

\section{Convergence of local supermartingales} \label{S:convergence}

In this section we state and prove a number of theorems regarding the event of convergence of a local supermartingale on a stochastic interval. The results are stated in Subsections~\ref{S:conv statements} and~\ref{S:conv locmg}, while the remaining subsections contain the proofs.

\subsection{Convergence results in the general case} \label{S:conv statements}

Our general convergence results will be obtained under the following basic assumption.

\begin{assumption} \label{A:1}
Let $\tau>0$ be a foretellable time with announcing sequence  $(\tau_n)_{n \in \N}$ and $X = M-A$ a local supermartingale on~$\lc 0,\tau\lc$, where $M$ and $A$ are a local martingale and a nondecreasing predictable process on $\lc 0,\tau\lc$, respectively, both starting at zero. 
\end{assumption}

\begin{theorem}[Characterization of the event of convergence] \label{T:conv}
Suppose Assumption~\ref{A:1} holds and fix $D\in\Fcal$. The following conditions are equivalent:
\begin{enumerate}[label={\rm(\alph{*})}, ref={\rm(\alph{*})}]
\item\label{T:conv:a} $\lim_{t\to\tau}X_t$ exists in $\R$ on $D$ and $(\Delta X)^- \wedge X^-$ is extended locally integrable on $D$.
\item\label{T:conv:a'} $\liminf_{t\to\tau}X_t>-\infty$  on $D$ and $(\Delta X)^- \wedge X^-$ is extended locally integrable on $D$.
\item\label{T:conv:b'} $X^-$  is extended locally integrable on $D$.
\item\label{T:conv:b''} $X^+$  is extended locally integrable on $D$ and $A_{\tau-} < \infty$ on $D$.
\item\label{T:conv:c} $[X^c,X^c]_{\tau-} + (x^2\wedge|x|)*\nu^X_{\tau-} + A_{\tau-} <\infty$ on $D$.
\item\label{T:conv:f} $[X,X]_{\tau-}< \infty$ on $D$, $\limsup_{t\to\tau}X_t>-\infty$  on $D$, and $(\Delta X)^- \wedge X^-$ is extended locally integrable on $D$.
\end{enumerate}
If additionally  $X$ is constant after $\tau_J=\inf\{t \geq 0:\Delta X_t=-1\}$, the above conditions are equivalent to the following condition:
	\begin{enumerate}[resume, label={\rm(\alph{*})}, ref={\rm(\alph{*})}]
		\item\label{T:conv:g} Either $\lim_{t\to\tau}\Ecal(X)_t \text{ exists in } \R\setminus\{0\}$ or  $\tau_J < \tau$ on $D$, and $(\Delta X)^- \wedge X^-$ is extended locally integrable on $D$.
	\end{enumerate}
\end{theorem}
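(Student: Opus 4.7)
My plan is to establish the equivalences by proving the cycle (a)$\Rightarrow$(a')$\Rightarrow$(b')$\Rightarrow$(c)$\Rightarrow$(a) among the conditions not involving $A$, then tie in (b'') via the decomposition $X=M-A$, derive (f) from (a) and a standard oscillation argument, and finally handle (g) under the additional assumption by invoking the logarithmic identity for stochastic exponentials (Appendix~\ref{A:SE}).

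The implication (a)$\Rightarrow$(a') is immediate. For (a')$\Rightarrow$(b'), the crux is that on $D$ the c\`adl\`ag property of $X$ together with $\liminf_{t\to\tau}X_t>-\infty$ forces $\sup_{t<\tau}X_t^-<\infty$: any sequence $(t_k)_{k\in\N}$ with $t_k<\tau$ and $X_{t_k}\to-\infty$ would, upon passing to a subsequence converging to some $s\in[0,\tau]$, either contradict finiteness of $X_s$ or $X_{s-}$ (if $s<\tau$) or force $\liminf_{t\to\tau}X_t=-\infty$ (if $s=\tau$). Hence the crossing times $\sigma_n=\inf\{t : X_t^->n\}$ satisfy $D\subset\bigcup_n\{\sigma_n\geq\tau\}$, and at such a crossing the pointwise bound
\[
(X^{\sigma_n})^-_{\sigma_n}\leq n+((\Delta X)^-\wedge X^-)_{\sigma_n}\1{\sigma_n<\tau}
\]
holds (one checks $X^-_{\sigma_n}-n\leq (\Delta X)^-_{\sigma_n}$ from $X^-_{\sigma_n-}\leq n$, and $X^-_{\sigma_n}-n\leq X^-_{\sigma_n}$ trivially). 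Combined with Lemma~\ref{L:ELI}\ref{L:ELI:2} and the extended local integrability of $(\Delta X)^-\wedge X^-$, this yields (b').

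For (b')$\Rightarrow$(c) I extended-localize so that $X^-\leq\Theta$ with $\Theta$ integrable. Then $M=X+A\geq-\Theta$ is (along the localizing sequence) a local martingale bounded below by an integrable random variable, hence a supermartingale by Fatou; this yields $\E[A_{\tau_n}]\leq\E[\Theta]$ and, passing to the limit, $A_{\tau-}<\infty$ on $D$. The bound $[X^c,X^c]_{\tau-}+(x^2\wedge|x|)*\nu^X_{\tau-}<\infty$ on $D$ then comes from applying Doob--Meyer to $(M^{\sigma_n})^2$ after truncation of large jumps. For (c)$\Rightarrow$(a), I decompose $X$ into its continuous martingale part, its compensated small-jump martingale, its (finitely many) large jumps, and the predictable finite-variation piece. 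Finiteness of $[X^c,X^c]_{\tau-}$ gives convergence of $X^c$ on $D$; finiteness of $(x^2\wedge|x|)*\nu^X_{\tau-}$ both bounds the $L^2$-norm of the compensated small jumps and, since $|x|\1{|x|>1}\leq x^2\wedge|x|$, implies via Lemma~\ref{L:ELI}\ref{L:ELI:5} that $|x|\1{|x|>1}*\mu^X_{\tau-}<\infty$ on $D$ so that only finitely many large jumps occur; finiteness of $A_{\tau-}$ handles the drift. The extended local integrability of $(\Delta X)^-\wedge X^-$ is then inherited from that of $(\Delta X)^-$, which is controlled by Lemma~\ref{L:ELI}\ref{L:ELI:5} applied to the negative jumps.

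The equivalence (a)$\Leftrightarrow$(b'') follows from $A=M-X$: (a) together with Doob convergence for $M^{\sigma_n}$ gives $A_{\tau-}<\infty$ and $X^+$ extended locally bounded, while (b'') implies $X^-\leq A_{\tau-}+X^+$, reducing to (b'). For (f), (a) gives $[X,X]_{\tau-}<\infty$ via (c) and $\limsup>-\infty$ trivially, while conversely $[X,X]_{\tau-}<\infty$ combined with $\limsup>-\infty$ prevents $\liminf=-\infty$ (each oscillation of amplitude $\varepsilon$ above a fixed level contributes at least $\varepsilon^2$ to $[X,X]$), so (f) implies (a'). Finally, under the additional assumption on $\tau_J$, the logarithmic identity on $\{\tau_J\geq\tau\}$,
\[
\log\Ecal(X) = X - \tfrac12[X^c,X^c] + (\log(1+x)-x)*\mu^X,
\]
together with (c) and the Taylor estimate $|\log(1+x)-x|\lesssim x^2\wedge|x|$ for $x$ bounded away from $-1$, shows that convergence of $\Ecal(X)$ in $\R\setminus\{0\}$ is equivalent to convergence of $X$ in $\R$; on $\{\tau_J<\tau\}$, $\Ecal(X)$ jumps to zero at $\tau_J$ and remains zero by hypothesis, placing us in the second alternative of (g). The principal obstacle is the careful bookkeeping around extended localization --- converting $L^1$-bounds obtained along an extended localizing sequence into almost-sure finiteness on $D$, managing the non-augmented filtration via the predictable approximations of Section~\ref{S:prelim}, and ensuring that the small-jump decomposition in (c)$\Rightarrow$(a) yields genuine convergence on $D$ alone, not merely up to a $\P$-nullset.
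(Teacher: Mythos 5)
Your implication \ref{T:conv:f} $\Longrightarrow$ \ref{T:conv:a'} is where the argument genuinely fails. The claim that $[X,X]_{\tau-}<\infty$ and $\limsup_{t\to\tau}X_t>-\infty$ together prevent $\liminf_{t\to\tau}X_t=-\infty$ because ``each oscillation of amplitude $\varepsilon$ contributes at least $\varepsilon^2$ to $[X,X]$'' is a continuous-martingale fact that does not survive jumps and drift. A downward excursion can be produced by the nondecreasing compensators $A$ and $x\oo_{x>\kappa}*\nu^X$, which contribute nothing to $[X,X]$, or by $n$ jumps of size $-1/n$, which contribute only $1/n$; indeed, part (iii) of Example~\ref{E:P1} exhibits a locally bounded martingale with $[X,X]_{\infty-}<\infty$, $\limsup_{t\to\infty}X_t=+\infty$ and $\liminf_{t\to\infty}X_t=-\infty$. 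Your argument never invokes the extended local integrability of $(\Delta X)^-\wedge X^-$, which that example shows to be indispensable. The paper's proof of \ref{T:conv:f} $\Longrightarrow$ \ref{T:conv:a} is the hardest step of the theorem: after stopping at the first jump of modulus at least $\kappa_1$ one writes $X=X'+\widehat X+x\oo_{x>\kappa_1}*\mu^X-x\oo_{x>\kappa_1}*\nu^X-A$ with $\widehat X=x\oo_{x<-\kappa_1}*(\mu^X-\nu^X)$, proves convergence of $\widehat X$ by establishing the pointwise bound $\widehat X^-\le(\Delta X)^-\wedge X^-+\kappa_2$ and invoking supermartingale convergence (this is exactly where the integrability hypothesis is consumed), and only then uses $\limsup_{t\to\tau}X_t>-\infty$ to force the two remaining nondecreasing terms to converge. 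Some argument of this kind is unavoidable.

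Two further points. In your reduction of \ref{T:conv:b''} to \ref{T:conv:b'}, the inequality $X^-\le A_{\tau-}+X^+$ is false (take $A=0$ and $M$ deeply negative); the correct route is to note that $A$ is predictable with $A_{\tau-}<\infty$ on $D$, hence extended locally bounded there by Lemma~\ref{L:ELI}\ref{L:ELI:4}, reduce to $A=0$, and apply Lemma~\ref{L:SMC} to $-X=-M$, whose negative part is $X^+$. In \ref{T:conv:b'} $\Longrightarrow$ \ref{T:conv:c}, ``Doob--Meyer applied to $(M^{\sigma_n})^2$ after truncation of large jumps'' controls only $[X^c,X^c]+x^2\oo_{|x|\le1}*\nu^X$; the large-jump part $|x|\oo_{|x|>1}*\nu^X_{\tau-}<\infty$ requires first showing that $X$ converges and is extended locally integrable, so that its closure is a special semimartingale on $[0,\infty]$ and Proposition~II.2.29 in \citet{JacodS} applies --- a step your sketch omits. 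The remaining steps, namely \ref{T:conv:a} $\Longrightarrow$ \ref{T:conv:a'} $\Longrightarrow$ \ref{T:conv:b'}, the implication \ref{T:conv:c} $\Longrightarrow$ \ref{T:conv:a}, and the treatment of \ref{T:conv:g}, are sound and essentially follow the paper's route.
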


\begin{remark} \label{R:4.3}
We make the following observations concerning Theorem~\ref{T:conv}. As in the theorem, we suppose Assumption~\ref{A:1} holds and fix $D\in\Fcal$:
\begin{itemize}
\item For any local supermartingale $X$, the jump process $\Delta X$ is locally integrable. This is however not enough to obtain good convergence theorems as the examples in Section~\ref{S:examp} show. The crucial additional assumption is that localization is in the extended sense. In Subsections~\ref{A:SS:lack} and \ref{A:SS:one}, several examples are collected that illustrate that the conditions of Therorem~\ref{T:conv} are non-redundant, in the sense that the implications fail for some local supermartingale $X$ if some of the conditions is omitted.
\item In \ref{T:conv:c}, we may replace $x^2\oo_{|x|\le \kappa}*\nu^X_{\tau-}$ by $x^2\oo_{|x|\le \kappa}*\mu^X_{\tau-}$, where $\kappa$ is any predictable extended locally integrable process. This follows from a localization argument and Lemma~\ref{L:BJ} below.
\item If any of the conditions \ref{T:conv:a}--\ref{T:conv:f} holds then $\Delta X$ is extended locally integrable on $D$. This is a by-product of the proof of the theorem. The extended local integrability of $\Delta X$ also follows, {\em a posteriori}, from Lemma~\ref{L:ELI}\ref{L:ELI:3} since \ref{T:conv:b'} \& \ref{T:conv:b''} imply that $X$ is extended locally integrable on~$D$.
\item If any of the conditions \ref{T:conv:a}--\ref{T:conv:f} holds and if  $X = M^\prime - A^\prime$ for some local supermartingale $M^\prime$ and some nondecreasing (not necessarily predictable) process $A'$ with $A'_0 = 0$, then $\lim_{t\to\tau} M^\prime_t$ exists in~$\R$ on $D$ and $A^\prime_{\tau-} < \infty$ on $D$. Indeed, $M^\prime \geq X$ and thus the implication {\ref{T:conv:b'}} $\Longrightarrow$  {\ref{T:conv:a}} applied to $M^\prime$ yields that $\lim_{t \to \tau} M^\prime_t$ exists in $\R$, and therefore also $A_{\tau-}^\prime<\infty$.
\item One might conjecture that Theorem~\ref{T:conv} can be generalized to special semimartingales $X=M+A$ on~$\lc0,\tau\lc$ by replacing $A$ with its total variation process ${\rm Var}(A)$ in {\ref{T:conv:b''}} and {\ref{T:conv:c}}. However, such a generalization is not possible in general. As an illustration of what can go wrong, consider the deterministic finite variation process $X_t=A_t=\sum_{n=1}^{[t]}(-1)^n n^{-1}$, where $[t]$ denotes the largest integer less than or equal to~$t$. Then $\lim_{t\to \infty}X_t$ exists in~$\R$, being an alternating series whose terms converge to zero. Thus {\ref{T:conv:a}}--{\ref{T:conv:b'}} \& \ref{T:conv:f} hold with $D=\Omega$. However, the total variation ${\rm Var}(A)_\infty=\sum_{n=1}^\infty n^{-1}$ is infinite, so {\ref{T:conv:b''}} \& {\ref{T:conv:c}} do not hold with $A$ replaced by ${\rm Var}(A)$. Related questions are addressed by~\cite{CS:2005}.
\item One may similarly ask about convergence of local martingales of the form $X=x*(\mu-\nu)$ for some integer-valued random measure $\mu$ with compensator $\nu$. Here nothing can be said in general in terms of $\mu$ and $\nu$; for instance, if $\mu$ is already predictable then $X=0$.
\qed
\end{itemize}
\end{remark}

Therorem~\ref{T:conv} is stated in a  general form and its power appears when one considers specific events $D \in \mathcal F$.  For example, we may let $D = \{\lim_{t\to\tau}X_t \text{ exists in }\R\}$ or $D = \{\liminf_{t\to\tau}X_t>-\infty\}$.  Choices of this kind lead directly to the following corollary.

\begin{corollary}[Extended local integrability from below] \label{C:conv2}
Suppose Assumption~\ref{A:1} holds and $(\Delta X)^- \wedge X^-$ is extended locally integrable on $\{ \text{$\limsup_{t\to\tau}X_t>-\infty$} \}$. Then the following events are almost surely equal:
 \begin{align}
\label{T:conv2:1}
&\Big\{ \text{$\lim_{t\to\tau}X_t$ exists in $\R$} \Big\}; \\
&\Big\{ \text{$\liminf_{t\to\tau}X_t>-\infty$};  \Big\};\label{T:conv2:6}\\
\label{T:conv2:2}
&\Big\{ \text{$[X^c,X^c]_{\tau-} + (x^2 \wedge |x|) * \nu_{\tau-}^X + A_{\tau-} <\infty$} \Big\}; \\
&\Big\{ \text{$[X,X]_{\tau-} <\infty$} \Big\} \bigcap \Big\{\text{$\limsup_{t\to\tau}X_t > -\infty$} \Big\}.\label{T:conv2:5}
\end{align}
\end{corollary}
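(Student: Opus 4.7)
The plan is to apply Theorem~\ref{T:conv} once for each of the four events, choosing $D$ to be that event in turn. Denote the events in~\eqref{T:conv2:1}, \eqref{T:conv2:6}, \eqref{T:conv2:2}, \eqref{T:conv2:5} by $D_1$, $D_2$, $D_3$, $D_4$ respectively. Observe that $D_1\subset D_2\subset\{\limsup_{t\to\tau}X_t>-\infty\}$ and $D_4\subset\{\limsup_{t\to\tau}X_t>-\infty\}$ by construction. Since extended local integrability is inherited by subsets (any localizing sequence that works on $D$ also works on any $D'\subset D$), the hypothesis delivers extended local integrability of $(\Delta X)^-\wedge X^-$ on each of $D_1$, $D_2$, and $D_4$.

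With $D=D_1$, condition~\ref{T:conv:a} holds by the definition of $D_1$ together with the extended local integrability hypothesis, so the equivalence in Theorem~\ref{T:conv} yields \ref{T:conv:a'}, \ref{T:conv:c}, and~\ref{T:conv:f} simultaneously; this already delivers $D_1\subset D_2$, $D_1\subset D_3$, and $D_1\subset D_4$. The reverse inclusions $D_2\subset D_1$ and $D_4\subset D_1$ I would obtain by taking $D=D_2$ (where \ref{T:conv:a'} is verified, from the defining $\liminf$ clause and the extended local integrability hypothesis) and $D=D_4$ (where \ref{T:conv:f} is verified analogously from the definition of $D_4$ and the hypothesis) and in each case reading off~\ref{T:conv:a}.

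The cleanest and most interesting case is $D_3\subset D_1$: here I apply Theorem~\ref{T:conv} with $D=D_3$. Condition~\ref{T:conv:c} then holds tautologically, and crucially it does not itself require extended local integrability of $(\Delta X)^-\wedge X^-$ on $D_3$, so no additional input is needed for this step. The equivalence in the theorem immediately delivers~\ref{T:conv:a}, which asserts both that $\lim_{t\to\tau}X_t\in\R$ on $D_3$ and (as a by-product) that the extended local integrability of $(\Delta X)^-\wedge X^-$ holds on $D_3$. In particular $D_3\subset D_1$, which combined with the inclusions already established gives equality of all four events up to $\P$-null sets.

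There is no serious obstacle; the proof amounts to careful bookkeeping of which of the package-deal conditions in Theorem~\ref{T:conv} bundle in extended local integrability of $(\Delta X)^-\wedge X^-$ and which do not. The fact that~\ref{T:conv:c} does not is precisely what makes the passage $D_3\subset D_1$ go through without any further hypothesis on $D_3$, and in particular explains why the corollary's assumption needs to be imposed only on the larger event $\{\limsup_{t\to\tau}X_t>-\infty\}$ rather than on each of the $D_i$ individually.
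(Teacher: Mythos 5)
Your proof is correct and is exactly the argument the paper intends: its own proof simply says the statement "follows directly from Theorem~\ref{T:conv}, where for each inclusion the appropriate event $D$ is fixed," and your write-up spells out precisely that bookkeeping, including the key point that condition~\ref{T:conv:c} carries no extended-local-integrability requirement, which is why $D_3\subset D_1$ needs no hypothesis and why the assumption need only be imposed on $\{\limsup_{t\to\tau}X_t>-\infty\}$.
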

\begin{proof}
The statement follows directly from Therorem~\ref{T:conv}, where for each inclusion the appropriate event $D$ is fixed.
\end{proof}

We remark that the identity \eqref{T:conv2:1} $=$ \eqref{T:conv2:6} appears already in Theorem~5.19 of \citet{Jacod_book} under slightly more restrictive assumptions, along with the equality
\begin{equation} \label{eq:XMA}
	\Big\{ \text{$\lim_{t\to\tau}X_t$ exists in $\R$} \Big\} = \Big\{ \text{$\lim_{t\to\tau}M_t$ exists in $\R$} \Big\} \bigcap \Big\{ A_{\tau-} < \infty \Big\}.
\end{equation}
Corollary~\ref{C:conv2} yields that this equality in fact holds under assumptions strictly weaker than in \citet{Jacod_book}. Note, however, that some assumption is needed; see Example~\ref{ex:semimartingale}. Furthermore, a special case of the equivalence  \ref{T:conv:f}  $\Longleftrightarrow$  \ref{T:conv:g}  in Therorem~\ref{T:conv} appears in Proposition~1.5 of \citet{Lepingle_Memin_Sur}. Moreover, under additional integrability assumptions on the jumps, Section~4 in \cite{Kabanov/Liptser/Shiryaev:1979} provides related convergence conditions. In general, however, we could not find any of the implications  in Therorem~\ref{T:conv}---except, of course, the trivial implication \ref{T:conv:a} $\Longrightarrow$ \ref{T:conv:a'}---in this generality in the literature. Some of the implications are easy to prove, some of them are more involved. Some of these implications were expected, while others were surprising to us; for example, the limit superior in \ref{T:conv:f} is needed even if~$A=0$ so that~$X$ is a local martingale on $\lc0,\tau\lc$. Of course, whenever the extended local integrability condition appears, then, somewhere in the corresponding proof, so does a reference to the classical supermartingale convergence theorem, which relies on Doob's upcrossing inequality.

\begin{corollary}[Extended local integrability]   \label{C:convergence_QV}
Under Assumption~\ref{A:1}, if $|X| \wedge \Delta X$ is extended locally integrable we have, almost surely,
\begin{align*}
\Big\{ \text{$\lim_{t\to\tau}X_t$ exists in $\R$} \Big\} = \Big\{ \text{$[X,X]_{\tau-} <\infty$} \Big\} \bigcap \Big\{ A_{\tau-}<\infty \Big\}.
\end{align*}
\end{corollary}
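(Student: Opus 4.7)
The plan is to apply Corollary~\ref{C:conv2} and then reconcile its compensator-side criterion with the jump-measure side. First I would verify the hypothesis of Corollary~\ref{C:conv2}, namely extended local integrability of $(\Delta X)^-\wedge X^-$ on $\{\limsup_{t\to\tau}X_t>-\infty\}$. In fact the stronger bound $(\Delta X)^-\le\bigl|\,|X|\wedge\Delta X\,\bigr|$ holds pathwise (with equality on $\{\Delta X\le 0\}$, since there $|X|\wedge\Delta X=\Delta X$, and the left-hand side vanishing on $\{\Delta X>0\}$), so the present hypothesis yields extended local integrability of $(\Delta X)^-$ and \emph{a fortiori} of $(\Delta X)^-\wedge X^-$ on all of $\Omega$. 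Corollary~\ref{C:conv2} then gives
\[
D := \bigl\{\textstyle\lim_{t\to\tau}X_t \text{ exists in }\R\bigr\} = \bigl\{[X^c,X^c]_{\tau-} + (x^2\wedge|x|)*\nu^X_{\tau-} + A_{\tau-}<\infty\bigr\}
\]
almost surely.

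Using $[X,X]=[X^c,X^c]+x^2*\mu^X$, the claim reduces to showing that, almost surely on $D$, the events $\{(x^2\wedge|x|)*\nu^X_{\tau-}<\infty\}$ and $\{x^2*\mu^X_{\tau-}<\infty\}$ coincide. I would split the jumps at $|x|=1$. For the small part, $x^2\wedge|x|=x^2$ and the jumps of $(x^2\oo_{|x|\le 1})*\mu^X$ are bounded by $1$, so finiteness at $\tau-$ and extended local boundedness coincide there, and the equivalence between the $\mu^X$-side and the $\nu^X$-side follows from a direct compensator argument identical to the one used in the proof of Lemma~\ref{L:ELI}\ref{L:ELI:5}. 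For the large part, $x^2\wedge|x|=|x|$; since both $|x|$ and $x^2$ exceed $1$ on $\{|x|>1\}$, the $\mu^X$-integrals $|x|\oo_{|x|>1}*\mu^X_{\tau-}$ and $x^2\oo_{|x|>1}*\mu^X_{\tau-}$ are finite simultaneously (both reduce to finitely many large jumps with finite contributions).

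The main obstacle is the remaining transition from $|x|\oo_{|x|>1}*\mu^X_{\tau-}<\infty$ on $D$ to $|x|\oo_{|x|>1}*\nu^X_{\tau-}<\infty$ on $D$. Here I would use that on $D$ the convergent c\`adl\`ag path of $X$ satisfies $\sup_{t<\tau}|X_t|<\infty$, so $|\Delta X|\le 2\sup_{t<\tau}|X_t|$ there. Combining this pathwise bound with the extended local integrability of $|X|\wedge\Delta X$, and invoking Lemma~\ref{L:ELI}\ref{L:ELI:2} to nest the two localizing sequences (the one from the hypothesis and the sequence of crossing times of $|X|$), I would produce extended local integrability of $\Delta X$, and hence of $X$, on $D$ by Lemma~\ref{L:ELI}\ref{L:ELI:3}. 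Lemma~\ref{L:ELI}\ref{L:ELI:5}, applied separately to $x\oo_{x>1}$ and $(-x)\oo_{x<-1}$, then transfers extended local integrability from $\mu^X$ to $\nu^X$ and closes the argument. This step is precisely where the present hypothesis goes beyond what Corollary~\ref{C:conv2} offers: controlling both signs of large jumps of $X$ is what $|X|\wedge\Delta X$ provides once one capitalizes on pathwise boundedness on $D$.
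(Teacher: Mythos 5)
Your opening steps are fine: the pathwise bound $(\Delta X)^-\le\bigl|\,|X|\wedge\Delta X\,\bigr|$ does hold, so the hypothesis of Corollary~\ref{C:conv2} is met and $D=\{\lim_{t\to\tau}X_t\text{ exists in }\R\}$ coincides almost surely with $\{[X^c,X^c]_{\tau-}+(x^2\wedge|x|)*\nu^X_{\tau-}+A_{\tau-}<\infty\}$. The gap is in the sentence ``the claim reduces to showing that, almost surely on $D$, the events $\{(x^2\wedge|x|)*\nu^X_{\tau-}<\infty\}$ and $\{x^2*\mu^X_{\tau-}<\infty\}$ coincide.'' Proving a statement \emph{on $D$} can only yield the inclusion $D\subset\{[X,X]_{\tau-}<\infty\}\cap\{A_{\tau-}<\infty\}$, which is anyway immediate from \eqref{T:conv2:1} $\subset$ \eqref{T:conv2:5} together with $A_{\tau-}<\infty$ on $D$. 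The substantive half of the corollary is the reverse inclusion: one must show that almost every point of $\{[X,X]_{\tau-}<\infty\}\cap\{A_{\tau-}<\infty\}$ lies in $D$, i.e.\ that finite quadratic variation forces convergence. Your entire treatment of the ``main obstacle'' is carried out on $D$: you invoke $\sup_{t<\tau}|X_t|<\infty$, the finiteness of the number of large jumps, and the extended local integrability of $\Delta X$, all of which you derive \emph{from} convergence. None of this is available on $\{[X,X]_{\tau-}<\infty\}\cap\{A_{\tau-}<\infty\}\setminus D$, which is exactly the set you must show is null, so the argument is circular for that direction.

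The missing ingredient is how to pass from $[X,X]_{\tau-}<\infty$ to convergence when no control on $\limsup_{t\to\tau}X_t$ is given: the event \eqref{T:conv2:5} carries the extra condition $\limsup_{t\to\tau}X_t>-\infty$, and Example~\ref{E:P1} shows that this condition cannot simply be dropped. The paper's proof first reduces to the local martingale part $M$ via \eqref{eq:XMA} and the identity $\{[X,X]_{\tau-}<\infty\}=\{[M,M]_{\tau-}<\infty\}$ on $\{A_{\tau-}<\infty\}$, and then applies the inclusion \eqref{T:conv2:5} $\subset$ \eqref{T:conv2:1} twice: once to $M$ on $\{\limsup_{t\to\tau}M_t>-\infty\}$, and once to $-M$ on $\{\limsup_{t\to\tau}M_t=-\infty\}$, where necessarily $\limsup_{t\to\tau}(-M_t)=+\infty>-\infty$. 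It is precisely in the second application that the two-sided hypothesis on $|X|\wedge\Delta X$ (rather than a one-sided bound on $(\Delta X)^-\wedge X^-$) is consumed. Some device of this kind is indispensable; as written, your proposal establishes only one of the two inclusions.
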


\begin{proof}
Note that $\{[X,X]_{\tau-}<\infty\}=\{[M,M]_{\tau-}<\infty\}$ on $\{A_{\tau-}<\infty\}$. Thus, in view of~\eqref{eq:XMA}, it suffices to show that $\{ \text{$\lim_{t\to\tau}M_t$ exists in $\R$} \} = \{ \text{$[M,M]_{\tau-} <\infty$}\}$. The inclusion ``$\subset$'' is immediate from \eqref{T:conv2:1} $\subset$ \eqref{T:conv2:5} in Corollary~\ref{C:conv2}. The reverse inclusion follows noting that
\begin{align*}
\Big\{ \text{$[M,M]_{\tau-} <\infty$} \Big\} &= \left(\Big\{ \text{$[M,M]_{\tau-} <\infty$} \Big\} \cap \Big\{ \limsup_{t\to \tau } M_t > -\infty \Big\}\right)\\
&\qquad \cup \left(\Big\{ \text{$[M,M]_{\tau-} <\infty$} \Big\} \cap \Big\{ \limsup_{t\to \tau } M_t = -\infty \Big\} \cap \Big\{ \limsup_{t\to \tau } (-M_t) > -\infty \Big\}\right)
\end{align*}
and applying the inclusion \eqref{T:conv2:5} $\subset$ \eqref{T:conv2:1} once to $M$ and once to $-M$.
\end{proof}

\begin{corollary}[$L^1$--boundedness]    \label{C:conv001}
Suppose Assumption~\ref{A:1} holds, and let $f:\R\to\R_+$ be any nondecreasing function with $f(x)\ge x$ for all sufficiently large $x$. Then the following conditions are equivalent:
\begin{enumerate}[label={\rm(\alph*)},ref={\rm(\alph*)}]
\item\label{T:conv1:a} $\lim_{t\to\tau}X_t$ exists in $\R$  and $(\Delta X)^- \wedge X^-$ is extended locally integrable.
\item\label{T:conv1:c} $A_{\tau-}<\infty$ and for some  extended locally integrable optional process $U$,
\begin{align}  \label{eq:T:conv:exp}
\sup_{\sigma \in \mathcal{T}} \E\left[f(X_\sigma - U_\sigma)  \oo_{\{\sigma<\tau\}} \right] < \infty.
\end{align}
\item\label{T:conv1:d} For some  extended locally integrable optional process $U$, \eqref{eq:T:conv:exp} holds with $x\mapsto f(x)$ replaced by $x\mapsto f(-x)$.
\item\label{C:conv001:e}  The process $\overline X=X\oo_{\lc0,\tau\lc}+(\limsup_{t\to\tau}X_t)\oo_{\lc\tau,\infty\lc}$, extended to $[0,\infty]$ by $\overline X_\infty  = \limsup_{t\to\tau}X_t$,  is a semimartingale on $[0,\infty]$ and $(\Delta X)^- \wedge X^-$ is extended locally integrable.
\item\label{C:conv001:d} The process $\overline X=X\oo_{\lc0,\tau\lc}+(\limsup_{t\to\tau}X_t)\oo_{\lc\tau,\infty\lc}$,  extended to $[0,\infty]$ by $\overline X_\infty  = \limsup_{t\to\tau}X_t$,  is a special semimartingale on $[0,\infty]$.
\end{enumerate}
\end{corollary}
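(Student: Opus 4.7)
\emph{Proof proposal.}
The plan is to split the five-way equivalence into two blocks: the ``moment-bound'' conditions \ref{T:conv1:a}--\ref{T:conv1:d}, which I would cycle via Theorem~\ref{T:conv} and Lemma~\ref{L:ELI}\ref{L:ELI:6}; and the ``semimartingale on $[0,\infty]$'' conditions \ref{C:conv001:e} and \ref{C:conv001:d}, which I would tie back to \ref{T:conv1:a} by explicitly extending the decomposition $X = M - A$ across the horizon~$\tau$.

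For the first block, \ref{T:conv1:a} $\Longrightarrow$ \ref{T:conv1:c} and \ref{T:conv1:a} $\Longrightarrow$ \ref{T:conv1:d} come essentially for free upon taking $U = X$: Theorem~\ref{T:conv} tells us that \ref{T:conv1:a} is equivalent to \ref{T:conv:b'} \emph{and} \ref{T:conv:b''}, so $X$ itself is extended locally integrable, and then $f(\pm(X_\sigma - U_\sigma)) = f(0)$ is deterministic. For \ref{T:conv1:c} $\Longrightarrow$ \ref{T:conv1:a}, I would use the elementary estimate $x^+ \le f(x) + C$ (valid for some $C \ge 0$ since $f \ge 0$ and $f(x) \ge x$ for large~$x$) combined with $X^+_\sigma \le (X_\sigma - U_\sigma)^+ + |U_\sigma|$ to get
\[
X^+_\sigma \oo_{\{\sigma<\tau\}} \le f(X_\sigma - U_\sigma)\oo_{\{\sigma<\tau\}} + C + |U_\sigma|\oo_{\{\sigma<\tau\}}.
\]
Substituting $\sigma \wedge \sigma_n \wedge N$ for~$\sigma$ (where $(\sigma_n,\Theta_n)$ extendedly localizes $U$ and $N \in \N$), taking expectations, and passing $N \to \infty$ via Fatou together with right continuity of $X$ yields $\sup_{\sigma \in \Tcal}\E[X^+_{\sigma \wedge \sigma_n}\oo_{\{\sigma\wedge\sigma_n<\tau\}}] \le B + C + \E[\Theta_n]$, with $B$ the supremum appearing in \ref{T:conv1:c}. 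Applying Lemma~\ref{L:ELI}\ref{L:ELI:6} to $X^{+,\sigma_n}$ and then Lemma~\ref{L:ELI}\ref{L:ELI:2} upgrades this to extended local integrability of $X^+$ on~$\Omega$, which together with $A_{\tau-}<\infty$ from \ref{T:conv1:c} gives \ref{T:conv1:a} by Theorem~\ref{T:conv}\ref{T:conv:b''}. The implication \ref{T:conv1:d} $\Longrightarrow$ \ref{T:conv1:a} is symmetric via Theorem~\ref{T:conv}\ref{T:conv:b'} and also delivers $A_{\tau-}<\infty$ as a by-product.

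For the second block, from \ref{T:conv1:a} I would set $\overline A = A\oo_{\lc 0,\tau\lc} + A_{\tau-}\oo_{\lc\tau,\infty\lc}$, which is predictable, nondecreasing, and pathwise dominated by the a.s.\ finite $A_{\tau-}$, hence a predictable finite variation process on $[0,\infty]$. Extended local integrability of $X$ (established in the first block), together with Lemma~\ref{L:ELI}\ref{L:ELI:4} applied to $A$ and part~\ref{L:ELI:1}, transfers to $M = X + A$; the associated extended localizing sequence then dominates each stopped $\overline M^{\sigma_n}$ by an integrable random variable, so $\overline M^{\sigma_n}$ is a uniformly integrable martingale on $[0,\infty]$, promoting $\overline M$ to a local martingale on $[0,\infty]$. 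The decomposition $\overline X = \overline M - \overline A$ is then the canonical special decomposition required by \ref{C:conv001:d}, and \ref{C:conv001:e} follows by specialization to the semimartingale property and by reusing the extended local integrability of $X^-$ to control the smaller $(\Delta X)^- \wedge X^-$. Conversely, both \ref{C:conv001:e} and \ref{C:conv001:d} force $\lim_{t\to\tau}X_t$ to exist in~$\R$, since defining $\overline X$ via $\limsup$ makes it c\`adl\`ag at $\tau$ only when this convergence holds; the remaining extended local integrability condition in \ref{T:conv1:a} is explicit in \ref{C:conv001:e} and, for \ref{C:conv001:d}, follows from the extended local integrability of the local martingale part of the special decomposition.

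The main technical obstacle I anticipate is the transition in \ref{T:conv1:a} $\Longrightarrow$ \ref{C:conv001:d} from the local martingale property of $M$ on $\lc 0,\tau\lc$ to the genuine local martingale property of $\overline M$ on $[0,\infty]$: one must reconcile the stopping convention $M_t = M_{\sigma_n}\oo_{\{\sigma_n<\tau\}}$ past $\sigma_n$ with the value $\overline M_\tau = \lim_{t\to\tau-}M_t$, and justify that the uniform domination afforded by extended local integrability---as opposed to mere ordinary local integrability of $\Delta M$---is precisely what closes $\overline M^{\sigma_n}$ as a uniformly integrable martingale all the way up to and past $\tau$ on the covering event $\{\sigma_n \ge \tau\}$. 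Once this is pinned down, gluing along the covering family $\{\sigma_n \ge \tau\}$ delivers the semimartingale structure of $\overline X$ on $[0,\infty]$ globally.
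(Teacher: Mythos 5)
Your argument is correct and, for most of the corollary, follows the paper's own route: the cycle among \ref{T:conv1:a}--\ref{T:conv1:d} via the choice $U=X$, the elementary bound $x^+\le f(x)+C$ feeding into Lemma~\ref{L:ELI}\ref{L:ELI:6} and the implications \ref{T:conv:b'}/\ref{T:conv:b''} $\Longrightarrow$ \ref{T:conv:a} of Theorem~\ref{T:conv}, and the passage to \ref{C:conv001:e} by closing $\overline M^{\sigma_n}$ as a uniformly integrable martingale under the domination furnished by extended local integrability --- this last step is precisely the content of Lemma~\ref{L:SMC}, which the paper invokes directly where you re-derive it. Two of your routes genuinely differ, both harmlessly. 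First, you close the first block by proving \ref{T:conv1:d} $\Longrightarrow$ \ref{T:conv1:a} directly from $X^-\le(X-U)^-+U^-$, whereas the paper proves \ref{T:conv1:d} $\Longrightarrow$ \ref{T:conv1:c} by first extracting extended local integrability of $M^-$, then of $M$ and of $A$; your version is shorter and still delivers $A_{\tau-}<\infty$ through Theorem~\ref{T:conv}\ref{T:conv:b''}. Second, for the special semimartingale condition \ref{C:conv001:d} the paper does not build the decomposition by hand: it deduces \ref{C:conv001:d} $\Longleftrightarrow$ \ref{T:conv1:a}~\&~\ref{C:conv001:e} from Proposition~II.2.29 of Jacod--Shiryaev combined with the equivalence \ref{T:conv:a} $\Longleftrightarrow$ \ref{T:conv:c} of Theorem~\ref{T:conv}. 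Your explicit construction of $\overline X=\overline M-\overline A$ and your converse via extended local integrability of the local martingale part are also valid, but the converse silently uses that a local martingale on $[0,\infty]$ is extended locally integrable (localize to uniformly integrable martingales, apply first-crossing times as in the example following Lemma~\ref{L:ELI}, then Lemma~\ref{L:ELI}\ref{L:ELI:2}); that observation should be made explicit, since it is the only thing standing between ``special semimartingale on $[0,\infty]$'' and the integrability half of \ref{T:conv1:a}.
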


\begin{proof}
{\ref{T:conv1:a}} $\Longrightarrow$ {\ref{T:conv1:c}} \& {\ref{T:conv1:d}}: The implication {\ref{T:conv:a}} $\Longrightarrow$ {\ref{T:conv:b'}} \& {\ref{T:conv:b''}} in Theorem~\ref{T:conv} yields that $X$ is extended locally integrable, so we may simply take $U=X$.

{\ref{T:conv1:c}} $\Longrightarrow$ {\ref{T:conv1:a}}:
We have $f(x)\ge \1{x\ge \kappa}x^+$ for some constant $\kappa \geq 0$ and all $x\in\R$. Hence~\eqref{eq:T:conv:exp} holds with $f(x)$ replaced by $x^+$. Lemma~\ref{L:ELI}\ref{L:ELI:6} then implies that $(X-U)^+$ is extended locally integrable. Since $X^+\le (X-U)^++U^+$, we have $X^+$ is extended locally integrable.
The implication \ref{T:conv:b''} $\Longrightarrow$ \ref{T:conv:a} in Theorem~\ref{T:conv} now yields~\ref{T:conv1:a}.

{\ref{T:conv1:d}} $\Longrightarrow$ {\ref{T:conv1:c}}:
We now have $f(x)\ge \1{x\le -\kappa}x^-$ for some constant $\kappa \geq 0$ and all $x\in\R$, whence as above, $(X-U)^-$ is extended locally integrable. Since $M^-\le(M-U)^-+U^-\le(X-U)^-+U^-$, it follows that $M^-$ is extended locally integrable. The implication \ref{T:conv:b'} $\Longrightarrow$ \ref{T:conv:a}~\&~\ref{T:conv:b''} in Theorem~\ref{T:conv} yields that $\lim_{t\to\tau}M_t$ exists in $\R$ and that $M$ is extended locally integrable. Hence $A=(U-X+M-U)^+\le(X-U)^-+|M|+|U|$ is extended locally integrable, so Lemma~\ref{L:ELI}\ref{L:ELI:4} yields $A_{\tau-}<\infty$. Thus~{\ref{T:conv1:c}} holds.

\ref{T:conv1:a}   $\Longrightarrow$ \ref{C:conv001:e}:
By \eqref{eq:XMA}, $A$ converges. Moreover, since $M \geq X$, we have  $(\Delta M)^-$ is extended locally integrable by Remark~\ref{R:4.3}, say with localizing sequence $(\rho_n)_{n \in \N}$. Now, it is sufficient to prove that $M^{\rho_n}$ is a local martingale on $[0,\infty]$ for each $n \in \N$, which, however, follows from Lemma~\ref{L:SMC} below.

\ref{C:conv001:e} $\Longrightarrow$ \ref{T:conv1:a}:
Obvious.

\ref{T:conv1:a} \& \ref{C:conv001:e} $\Longleftrightarrow$ \ref{C:conv001:d}:
This equivalence follows from Proposition~II.2.29 in~\cite{JacodS}, in conjunction with the equivalence \ref{T:conv:a}  $\Longleftrightarrow$  \ref{T:conv:c} in Theorem~\ref{T:conv}.
\end{proof}

	Examples~\ref{E:P1} and \ref{ex:semimartingale} below illustrate that the integrability condition is needed in order that \ref{C:conv001}\ref{T:conv1:a} imply the semimartingale property of $X$ on the extended axis. These examples also show that the integrability condition in Corollary~\ref{C:conv001}\ref{C:conv001:e} is not redundant.

\begin{remark} \label{R:bed implied} In Corollary~\ref{C:conv001}, convergence implies not only $L^1$--integrability but also boundedness. Indeed, let $g:\R\to\R_+$ be either $x\mapsto f(x)$ or $x\mapsto f(-x)$. If any of the conditions \ref{T:conv1:a}--\ref{T:conv1:d} in Corollary~\ref{C:conv001} holds then there exists an nondecreasing extended locally integrable optional process $U$ such that the family
\begin{align*}
\left(g(X_\sigma - U_\sigma)  \oo_{\{\sigma<\tau\}} \right)_{\sigma \in \mathcal{T}} \qquad \text{is bounded.}
\end{align*}
To see this, note that if {\ref{T:conv1:a}} holds then $X$ is extended locally integrable. If $g$ is $x\mapsto f(x)$, let $U=\sup_{t \leq \cdot} X_t$, whereas if $g$ is $x\mapsto f(-x)$, let $U=\inf_{t \leq \cdot} X_t$. In either case, $U$ is nondecreasing and extended locally integrable and $(g(X_\sigma-U_\sigma))_{\sigma\in\Tcal}$ is bounded.\qed
\end{remark}

With a suitable choice of $f$ and additional requirements on $U$, condition~\eqref{eq:T:conv:exp} has stronger implications for the tail integrability of the compensator $\nu^X$ than can be deduced, for instance, from Theorem~\ref{T:conv} directly. The following result records the useful case where $f$ is an exponential.

\begin{corollary}[Exponential integrability of~$\nu^X$] \label{C:conv1}
Suppose Assumption~\ref{A:1} holds. If $A_{\tau-}<\infty$ and~\eqref{eq:T:conv:exp} holds with some~$U$ that is extended locally bounded and with $f(x)=e^{cx}$ for some $c\ge1$, then
\begin{equation}\label{T:conv:nu}
(e^x-1-x) * \nu^X_{\tau-} < \infty.
\end{equation}
\end{corollary}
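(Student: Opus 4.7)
The plan is to reduce to $U\equiv 0$, then apply It\^o's formula to $e^{cX}$ to isolate the predictable-compensator term $(e^{cx}-1-cx)*\nu^X$ and bound it using the hypothesized $L^1$-bound on $e^{cX}$ together with the extended local boundedness of $A$. For the reduction, since $U$ is extended locally bounded, choose a nondecreasing sequence $(\rho_n)$ of stopping times with deterministic $|U^{\rho_n}|\le K_n$ and $\bigcup_n\{\rho_n\ge\tau\}=\Omega$ a.s.; on $\{\sigma<\tau\wedge\rho_n\}$ we have $e^{cX_\sigma}\le e^{cK_n}e^{c(X_\sigma-U_\sigma)}$, so \eqref{eq:T:conv:exp} with $U\equiv 0$ holds when $\tau$ is replaced by $\tau\wedge\rho_n$. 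As the desired finiteness is a pathwise assertion that can be checked separately on each $\{\rho_n\ge\tau\}$, I assume $U\equiv 0$ and set $K:=\sup_{\sigma\in\Tcal}\E[e^{cX_\sigma}\oo_{\sigma<\tau}]<\infty$. Since $c\mapsto e^{cx}-1-cx$ is nondecreasing on $[0,\infty)$ for each $x\in\R$, the hypothesis $c\ge 1$ gives $e^{cx}-1-cx\ge e^x-1-x$ pointwise, and it therefore suffices to prove the stronger statement $(e^{cx}-1-cx)*\nu^X_{\tau-}<\infty$.

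By Corollary~\ref{C:conv001} and Lemma~\ref{L:ELI}, the hypothesis implies that $\lim_{t\to\tau}X_t$ exists in $\R$, $\sup_{t<\tau}|X_t|<\infty$ almost surely, $A$ is extended locally bounded, and $M=X+A$ is extended locally integrable. Combining these facts with the hitting times $\inf\{t\ge 0:|X_t|\ge n\}$ and a standard localizing sequence for the local martingale $e^{cX_-}\cdot M$, I construct a nondecreasing sequence $(\sigma_n)$ of stopping times with $\bigcup_n\{\sigma_n\ge\tau\}=\Omega$ a.s., such that $|X_-|\le n$ on $\lc0,\sigma_n\rc$ (hence $e^{-cn}\le e^{cX_-}\le e^{cn}$) and $A\le L_n$ for a deterministic constant $L_n$, and such that $e^{cX_-}\cdot M$ becomes a uniformly integrable martingale after further stopping by a sequence $(\tau_j^*)_j$.

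Apply It\^o's formula to $e^{cX}$, using $X=M-A$:
\begin{equation*}
(e^{cX_-}\cdot V)_t \;=\; e^{cX_t}-1 - c(e^{cX_-}\cdot M)_t + c(e^{cX_-}\cdot A)_t - \tfrac{c^2}{2}(e^{cX_-}\cdot[X^c,X^c])_t,
\end{equation*}
where $V:=(e^{cx}-1-cx)*\mu^X\ge 0$. For an announcing sequence $(\tau_i^{(a)})$ of $\tau$, set $\eta:=\sigma_n\wedge k\wedge\tau_j^*\wedge\tau_i^{(a)}<\tau$; take expectations, drop the nonnegative quadratic-variation term, use that the martingale term has zero expectation, and apply the deterministic bounds to obtain
\begin{equation*}
\E[(e^{cX_-}\cdot V)_\eta] \;\le\; \E[e^{cX_\eta}] - 1 + c\,\E[(e^{cX_-}\cdot A)_\eta] \;\le\; K-1 + c\,e^{cn}L_n.
\end{equation*}
Sending $i,j,k\to\infty$ and invoking monotone convergence yields $\E[(e^{cX_-}\cdot V)_{\sigma_n\wedge\tau-}]\le K-1+c\,e^{cn}L_n<\infty$; the lower bound $e^{cX_-}\ge e^{-cn}$ on $\lc0,\sigma_n\rc$ then gives $\E[V_{\sigma_n\wedge\tau-}]<\infty$. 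Since $V$ is thereby locally integrable, its dual predictable projection $V^p=(e^{cx}-1-cx)*\nu^X$ satisfies $\E[V^p_{\sigma_n\wedge\tau-}]=\E[V_{\sigma_n\wedge\tau-}]<\infty$, so $V^p_{\sigma_n\wedge\tau-}<\infty$ a.s.; on $\{\sigma_n\ge\tau\}$ this coincides with $V^p_{\tau-}$, and taking the union over $n$ yields the desired conclusion almost surely. The main obstacle is orchestrating the multiple localizations so that the deterministic bounds on $e^{cX_-}$ and $A$ coexist with the reduction of $e^{cX_-}\cdot M$ to a true martingale, while preserving the covering $\bigcup_n\{\sigma_n\ge\tau\}=\Omega$ that transfers the localized finiteness to $V^p_{\tau-}$.
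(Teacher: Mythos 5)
Your proposal is correct and follows essentially the same route as the paper's proof: apply It\^o's formula to the exponential of $X$, isolate the term $(e^{cx}-1-cx)*\mu^X$, take expectations using the defining property of the compensator and the $L^1$-bound from \eqref{eq:T:conv:exp}, and conclude by monotone convergence together with a lower bound on $e^{cX_-}$ (which the paper gets from $\inf_{t<\tau}X_t>-\infty$ via Theorem~\ref{T:conv}, and you get from deterministic hitting-time localization). The only differences are organizational — the paper first reduces to $c=1$ by Jensen and to $A=U=0$ by localization, whereas you keep $c$ and $A$ and dispose of them at the end via the monotonicity of $c\mapsto e^{cx}-1-cx$ and the explicit bound $c\,e^{cn}L_n$ — and these do not change the substance of the argument.
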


\begin{proof}
In view of Lemma~\ref{L:ELI}\ref{L:ELI:4} we may assume by localization that $A=U=0$ and by Jensen's inequality that $c = 1$. Lemma~\ref{L:ELI}\ref{L:ELI:6} then implies that $e^X$ and hence $X^+$ is extended locally integrable. Thus by Theorem~\ref{T:conv}, $\inf_{t<\tau} X_t >-\infty$. It\^o's formula yields
\begin{align*}
e^X &= 1 + e^{X_-} \cdot X + \frac{1}{2} e^{X_-}\cdot [X^c,X^c] + \left(e^{X_-}(e^x-1-x)\right)*\mu^X.
\end{align*}
 The second term on the right-hand side is a local martingale on $\lc0,\tau\lc$, so we may find a localizing sequence $(\rho_n)_{n\in\N}$ with $\rho_n<\tau$. Taking expectations and using the defining property of the compensator~$\nu^X$ as well as the associativity of the stochastic integral yield
\[
\E\left[ e^{X_{\rho_n}} \right] = 1+  \E\left[ e^{X_-}\cdot\left(\frac{1}{2} [X^c,X^c] + (e^x-1-x)*\nu^X\right)_{\rho_n} \right]
\]
for each $n \in \N$.
Due to~\eqref{eq:T:conv:exp}, the left-hand side is bounded by a constant that does not depend on~$n \in \N$. We now let~$n$ tend to infinity and recall that $\inf_{t<\tau} X_t >-\infty$ to deduce by the monotone convergence theorem that~\eqref{T:conv:nu} holds.
\end{proof}

\begin{remark}  \label{R:implication}
Extended local integrability of $U$ is not enough in Corollary~\ref{C:conv1}.  For example, consider an integrable random variable $\Theta$ with $\E[\Theta]= 0$ and $\E[e^\Theta] = \infty$ and the process $X =  \Theta \oo_{\lc 1, \infty\lc}$ under its natural filtration. Then $X$ is a martingale. Now, with $U = - X$, \eqref{eq:T:conv:exp} holds with $f(x)=e^x$, but $(e^x - 1 - x) * \nu^X_{\infty-} = \infty$. \qed
\end{remark}

\subsection{Convergence results with jumps bounded below} \label{S:conv locmg}

We now specialize to the case where $X$ is a local martingale on a stochastic interval with jumps bounded from below. The aim is to study a related process $Y$, which appears naturally in connection with the nonnegative local martingale $\Ecal(X)$. We comment on this connection below.

\begin{assumption} \label{A:2}
Let $\tau$ be a foretellable time, and $X$ a local martingale on~$\lc 0,\tau\lc$ with $\Delta X>-1$.  Suppose further that $(x-\log(1+x))*\nu^X$ is finite-valued, and define
\begin{equation*}
Y=X^c+\log(1+x)*(\mu^X-\nu^X)
\end{equation*}
and
\begin{align*}
	\gamma_t= - \int \log(1+x) \nu^X(\{t\},\dd x)
\end{align*}
for all $t<\tau$. 
\end{assumption}

The significance of the process $Y$ originates with the identity
\begin{align} \label{eq:VV}
\Ecal(X)= e^{Y-V} \quad\text{on $\lc0,\tau\lc$,} \quad \text{where} \quad V=\frac{1}{2}[X^c,X^c]+(x-\log(1+x))*\nu^X.
\end{align}
Thus $Y$ is the local martingale part and $-V$ is the predictable finite variation part of the local supermartingale $\log \Ecal(X)$. The process $V$ is called the {\em exponential compensator} of $Y$, and $Y-V$ is called the {\em logarithmic transform} of $X$. These notions play a central role in~\cite{Kallsen_Shir}.

Observe that the jumps of $Y$ can be expressed as
\begin{equation} \label{eq:DY}
\Delta Y_t = \log(1+\Delta X_t) + \gamma_t
\end{equation}
for all $t<\tau$. Jensen's inequality and the fact that $\nu^X(\{t\},\R)\le 1$ imply that $\gamma\ge 0$. If $X$ is quasi-left continuous, then $\gamma\equiv 0$.

In the spirit of our previous results, we now present a theorem that relates convergence of the processes $X$ and $Y$ to the finiteness of various derived quantities.

\begin{theorem}[Joint convergence of a local martingale and its logarithmic transform] \label{T:convYX}
Suppose Assumption~\ref{A:2} holds, and fix $\eta\in(0,1)$ and $\kappa>0$. Then the following events are almost surely equal:
\begin{align}
 	&\Big\{ \lim_{t\to\tau}X_t \text{ exists in }\R\Big\} \bigcap \Big\{ \lim_{t\to\tau}Y_t \text{ exists in }\R\Big\}   \label{T:convYX:1};\\
	&\Big\{\frac{1}{2} [X^c,X^c]_{\tau-}+ (x - \log(1+x)) * \nu^X_{\tau-} < \infty\Big\} ;     \label{T:convYX:2} \\
 	&\Big\{ \lim_{t\to\tau}X_t \text{ exists in }\R\Big\} \bigcap \Big\{ -\log(1+x)\oo_{x< -\eta}*\nu^X_{\tau-} < \infty\Big\}  \label{T:convYX:3};\\
	&\Big\{ \lim_{t\to\tau}Y_t \text{ exists in }\R\Big\} \bigcap   \Big\{x\oo_{x>\kappa} * \nu^X_{\tau-} < \infty\Big\}    .  \label{T:convYX:4}
\end{align}
\end{theorem}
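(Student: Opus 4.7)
I would prove that the four events coincide through the cycle $(2)\Rightarrow(1)\Rightarrow(4)\Rightarrow(2)$ together with $(2)\Leftrightarrow(3)$, the common thread being Theorem~\ref{T:conv} applied to $X$, to $Y$, and to the local supermartingale $\log\Ecal(X)=Y-V$. As a preliminary anchor, Corollary~\ref{C:conv2} applied to $X$ (with $(\Delta X)^-\le 1$ extended locally bounded from $\Delta X>-1$) yields
\[
\{\lim_{t\to\tau}X_t\in\R\} \;=\; \{[X^c,X^c]_{\tau-}+(x^2\wedge|x|)*\nu^X_{\tau-}<\infty\}\quad\text{a.s.},
\]
identifying the $X$-part of (1), (3), and (4) with a concrete $\nu^X$-tail condition.

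The equivalence $(2)\Leftrightarrow(3)$ and the implication $(1)\Rightarrow(4)$ then follow from elementary analytic comparisons on $(-1,\infty)$: $x-\log(1+x)$ is comparable to $x^2$ near $0$, to $x$ as $x\to\infty$, and to $|\log(1+x)|$ as $x\to-1$. Two-sided bounds identify $\{V_{\tau-}<\infty\}$ with $\{(x^2\wedge|x|)*\nu^X_{\tau-}<\infty\}\cap\{(-\log(1+x))\oo_{x<-\eta}*\nu^X_{\tau-}<\infty\}$, giving $(2)\Leftrightarrow(3)$ via the anchor. For $(1)\Rightarrow(4)$, the pointwise inequality $(x^2\wedge|x|)\ge\min(\kappa,1)\,x\oo_{x>\kappa}$ yields the large-positive-$x$ tail directly from the anchor.

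The key step $(2)\Rightarrow(1)$ is where $\lim Y\in\R$ is earned. I would apply Theorem~\ref{T:conv} (implication $(c)\Rightarrow(a)$) to the local supermartingale $\log\Ecal(X)=Y-V$, whose canonical decomposition has $M=Y$ and $A=V$, continuous martingale part $X^c$, and jumps $\Delta(Y-V)=\log(1+\Delta X)$ (the predictable atoms $\gamma_t$ cancel). Consequently, $\nu^{Y-V}$ is the pushforward of $\nu^X$ under $x\mapsto\log(1+x)$. A case analysis on $|\log(1+x)|\lessgtr 1$ establishes the universal bound
\[
(\log(1+x))^2\wedge|\log(1+x)|\;\le\;C\,(x-\log(1+x))\qquad\text{for all }x>-1,
\]
hence $(y^2\wedge|y|)*\nu^{Y-V}_{\tau-}\le CV_{\tau-}<\infty$ on (2). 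Combined with $[X^c,X^c]_{\tau-}\le 2V_{\tau-}$, this verifies condition (c) of Theorem~\ref{T:conv}, so $\lim(Y-V)\in\R$ on (2), and adding back the finite limit of $V$ gives $\lim Y\in\R$; meanwhile $\lim X\in\R$ on (2) follows from the anchor and the reverse inequality $x-\log(1+x)\ge c(x^2\wedge|x|)$.

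The main obstacle is $(4)\Rightarrow(2)$, where $(x-\log(1+x))*\nu^X_{\tau-}<\infty$ must be recovered from $\lim Y\in\R$ together with the positive-$x$ tail. I would apply Theorem~\ref{T:conv} (implication $(a)\Rightarrow(c)$) to $Y$ itself, yielding $(y^2\wedge|y|)*\nu^Y_{\tau-}<\infty$. The technical subtlety is verifying that $(\Delta Y)^-\wedge Y^-$ is extended locally integrable on $\{\lim Y\in\R\}$, which I would handle through Lemma~\ref{L:ELI}\ref{L:ELI:3}, using that $Y$ is bounded on $\lc 0,\tau\lc$ there and that $\Delta Y$ is locally integrable as the jump of a local martingale. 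Translating back through $\Delta Y=\log(1+\Delta X)+\gamma$ and the pushforward-plus-shift structure of $\nu^Y$, the reverse of the estimate from the previous paragraph recovers $(x-\log(1+x))\oo_{x\le\kappa}*\nu^X_{\tau-}<\infty$. The auxiliary tail $x\oo_{x>\kappa}*\nu^X_{\tau-}<\infty$ built into (4) then handles the remainder, and is indispensable precisely because $|\log(1+x)|\sim\log x$ while $x-\log(1+x)\sim x$ as $x\to\infty$, so the log transform is intrinsically insensitive to the positive-$x$ tail of $\nu^X$.
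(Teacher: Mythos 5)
Your cycle is sound except at the step \eqref{T:convYX:4} $\Rightarrow$ \eqref{T:convYX:2}, where two genuine gaps appear, both caused by the predictable-atom process $\gamma$. First, to invoke the implication \ref{T:conv:a} $\Rightarrow$ \ref{T:conv:c} of Theorem~\ref{T:conv} for $Y$ you must verify that $(\Delta Y)^-\wedge Y^-$ is \emph{extended} locally integrable on \eqref{T:convYX:4}; your justification via Lemma~\ref{L:ELI}\ref{L:ELI:3} plus ``$\Delta Y$ is locally integrable as the jump of a local martingale'' conflates ordinary with extended local integrability, and that distinction is exactly what the counterexamples of Section~\ref{S:examp} exploit (there are convergent martingales with infinite quadratic variation). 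The paper's fix (Lemma~\ref{L:convYX}) is to apply Theorem~\ref{T:conv} to $-Y$, for which $(\Delta(-Y))^-=(\Delta Y)^+\le(\Delta X)^++\gamma$: the first term is handled by the positive-tail hypothesis $x\oo_{x>\kappa}*\nu^X_{\tau-}<\infty$ built into \eqref{T:convYX:4} via Lemma~\ref{L:ELI}\ref{L:ELI:5}, and the second by the fact that $\gamma_t\to0$ there. Nothing in your sketch supplies either ingredient.

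Second, and more seriously, ``translating back through the pushforward-plus-shift structure of $\nu^Y$'' conceals the crux of this implication. What $(y^2\wedge|y|)*\nu^Y_{\tau-}<\infty$ controls near zero is $(\log(1+x)+\gamma)^2$, not $(\log(1+x))^2$; the discrepancy is $\sum_{t<\tau}\gamma_t^2$, which must be shown finite before you can recover $x^2\oo_{|x|\le\varepsilon}*\nu^X_{\tau-}<\infty$. The paper needs (i) Lemma~\ref{L:convYX:1}, a nontrivial section-theorem/Borel--Cantelli argument exploiting $\int x\,\nu^X(\{t\},\dd x)=0$ to prove $\gamma_t\to0$ on \eqref{T:convYX:4}, and (ii) a bootstrap in which $\sum_t\gamma_t^2$ is bounded by $\frac{2\varepsilon^2}{(1-\varepsilon)^2}\,x^2\oo_{|x|\le\varepsilon}*\nu^X$ plus a convergent series, so that for $\varepsilon$ small the quadratic term can be absorbed back into the left-hand side. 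Your argument is complete only in the quasi-left-continuous case $\gamma\equiv0$. On the positive side, your step \eqref{T:convYX:2} $\Rightarrow$ \eqref{T:convYX:1}, which applies \ref{T:conv:c} $\Rightarrow$ \ref{T:conv:a} of Theorem~\ref{T:conv} to $\log\Ecal(X)=Y-V$, is correct and is a genuinely different (and cleaner) route than the paper's passage through condition \ref{T:conv:g}: the atoms cancel in $\Delta(Y-V)=\log(1+\Delta X)$, so the pushforward identity is exact there and the universal bound $(\log(1+x))^2\wedge|\log(1+x)|\le C\,(x-\log(1+x))$ does the rest; the steps \eqref{T:convYX:2} $\Leftrightarrow$ \eqref{T:convYX:3} and \eqref{T:convYX:1} $\Rightarrow$ \eqref{T:convYX:4} are also fine.
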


\begin{lemma} \label{L:convYX}
Suppose Assumption~\ref{A:2} holds. For any event $D \in \mathcal F$ with $x\oo_{x>\kappa} * \nu^X_{\tau-} < \infty$ on $D$ for some $\kappa>0$, the following three statements are equivalent:
\begin{enumerate}[label={\rm(\alph{*})}, ref={\rm(\alph{*})}]
\item\label{T:convYX:a} $\lim_{t\to\tau}Y_t$ exists in $\R$ on $D$. 
\item\label{T:convYX:b'} $Y^-$  is extended locally integrable on $D$.
\item\label{T:convYX:b''}  $Y^+$  is extended locally integrable on $D$.
\end{enumerate}
\end{lemma}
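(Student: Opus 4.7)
The plan is to apply Theorem~\ref{T:conv} to the local martingale $Y$ on $\lc 0,\tau\lc$, taking $M=Y$ and $A\equiv 0$ in Assumption~\ref{A:1}. Under this specialization, condition~\ref{T:conv:b'} of Theorem~\ref{T:conv} is precisely Lemma's~\ref{T:convYX:b'}, condition~\ref{T:conv:b''} becomes Lemma's~\ref{T:convYX:b''} (the requirement $A_{\tau-}<\infty$ being automatic), and condition~\ref{T:conv:a} is Lemma's~\ref{T:convYX:a} together with the extended local integrability of $(\Delta Y)^-\wedge Y^-$ on $D$. The whole proof thus reduces to establishing that $(\Delta Y)^-$ is extended locally integrable on $\Omega$: this makes the extra hypothesis in condition~\ref{T:conv:a} automatic by domination, and the three equivalences of the lemma then follow at once from Theorem~\ref{T:conv}.

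For the key step, $\gamma\ge 0$ together with~\eqref{eq:DY} gives $(\Delta Y)^-\le (\log(1+\Delta X))^-$. Fix $\eta\in(0,1)$ and set $c_\eta:=-\log(1-\eta)$. A direct calculation---splitting according to whether $x\in(-\eta,0)$ or $x\in(-1,-\eta]$ and using that $|x|/|\log(1+x)|$ is bounded above by $\eta/c_\eta<1$ on the latter interval---yields the pointwise inequality
\begin{equation*}
(-\log(1+x))\oo_{x<0} \;\le\; c_\eta + C_\eta\,(x-\log(1+x))\oo_{x\le -\eta}, \qquad x>-1,
\end{equation*}
with $C_\eta=c_\eta/(c_\eta-\eta)<\infty$. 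Setting $f(x):=(x-\log(1+x))\oo_{x\le -\eta}$, this bound yields $\sup_{t\le\sigma}(\Delta Y_t)^- \le c_\eta + C_\eta\, f*\mu^X_\sigma$ for every stopping time~$\sigma$. By Assumption~\ref{A:2}, the nonnegative predictable process $f*\nu^X\le(x-\log(1+x))*\nu^X$ is finite-valued and hence extended locally bounded by Lemma~\ref{L:ELI}\ref{L:ELI:4}. Choosing a localizing sequence $(\sigma_n)_{n\in\N}$ with $\Omega=\bigcup_n\{\sigma_n\ge\tau\}$ and $f*\nu^X_{\sigma_n}\le n$, the compensator identity $\E[f*\mu^X_{\sigma_n}]=\E[f*\nu^X_{\sigma_n}]\le n$ then shows that $f*\mu^X_{\sigma_n}$ is an integrable random variable dominating $\sup_{t\le\sigma_n}(\Delta Y_t)^-$, as required.

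The main obstacle is precisely this compensator-to-measure transfer: the negative jumps of $Y$ must be controlled pathwise by an expression involving $\mu^X$, while the only finiteness at our disposal comes through $\nu^X$ via Assumption~\ref{A:2}. The elementary inequality above bridges the gap, separating the bounded ``small-jump'' contribution from a ``large-negative-jump'' contribution which is integrable against $\nu^X$. We remark that the hypothesis $x\oo_{x>\kappa}*\nu^X_{\tau-}<\infty$ on $D$ does not enter the argument above; it is convenient for applying the lemma within the proof of Theorem~\ref{T:convYX}, and one could alternatively use it by a symmetric argument to show directly that $(\Delta Y)^+$ is extended locally integrable on $D$ and apply Theorem~\ref{T:conv} to $-Y$ instead.
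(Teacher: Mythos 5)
Your overall reduction is sound and matches the paper's strategy in spirit: both the lemma's (b) and (c) imply (a) directly via Theorem~\ref{T:conv} applied to the local martingale $Y$ (with $A=0$), and the only real work is to upgrade (a) to condition \ref{T:conv:a} of Theorem~\ref{T:conv} by establishing extended local integrability of the negative part of the jumps of $\pm Y$. The problem is in your key step. You claim that $f*\nu^X\le(x-\log(1+x))*\nu^X$ being \emph{finite-valued} makes it extended locally bounded via Lemma~\ref{L:ELI}\ref{L:ELI:4}. That lemma requires $\sup_{t<\tau}|f*\nu^X_t|=f*\nu^X_{\tau-}<\infty$, whereas Assumption~\ref{A:2} only guarantees finiteness at each $t<\tau$; the limit $(x-\log(1+x))*\nu^X_{\tau-}$ may perfectly well be infinite. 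Indeed, if finite-valuedness implied finiteness at $\tau-$, the event~\eqref{T:convYX:2} in Theorem~\ref{T:convYX} would always equal $\Omega$ and that theorem would be vacuous. So there is no localizing sequence with $f*\nu^X_{\sigma_n}\le n$ and $\bigcup_n\{\sigma_n\ge\tau\}=\Omega$ in general, and your domination of $\sup_{t\le\sigma_n}(\Delta Y_t)^-$ by an integrable random variable collapses. (Your pointwise inequality $(-\log(1+x))\oo_{x<0}\le c_\eta+C_\eta(x-\log(1+x))\oo_{x\le-\eta}$ is correct, but it only transfers the problem to $f*\nu^X_{\tau-}$, which is exactly the quantity that can blow up.)

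Your closing remark that the hypothesis $x\oo_{x>\kappa}*\nu^X_{\tau-}<\infty$ on $D$ ``does not enter the argument'' should have been a warning: the lemma is false without some such hypothesis (a convergent local martingale need not have extended locally integrable negative part --- compare Example~\ref{E:P1}), so any proof that never uses it cannot be right. The paper's proof uses it essentially, and works with the \emph{positive} jumps instead: it applies Theorem~\ref{T:conv} to $-Y$ and shows $(\Delta(-Y))^-=(\Delta Y)^+\le(\Delta X)^++\gamma$ is extended locally integrable on $D$, where $(\Delta X)^+$ is controlled by Lemma~\ref{L:ELI}\ref{L:ELI:5} precisely because $x\oo_{x>\kappa}*\nu^X_{\tau-}<\infty$ on $D$, and $\gamma$ is controlled by Lemma~\ref{L:convYX:1} (which again needs that hypothesis) together with Lemma~\ref{L:ELI}\ref{L:ELI:6}. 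To repair your argument you would have to switch to the positive-jump side and invoke the hypothesis on $D$ in this way; the negative-jump side is genuinely not controllable from Assumption~\ref{A:2} alone.
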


\begin{proof}
The implications follow  from Theorem~\ref{T:conv}.  Only that \ref{T:convYX:a} implies \ref{T:convYX:b'} \& \ref{T:convYX:b''} needs an argument, and it suffices to show that $(\Delta (-Y))^-$ is extended locally integrable on $D$. By \eqref{eq:DY} we have $(\Delta(-Y))^-\le(\Delta X)^++\gamma$; Lemma~\ref{L:ELI}\ref{L:ELI:5} implies that $(\Delta X)^+$ is extended locally integrable; and Lemma~\ref{L:convYX:1} below and Lemma~\ref{L:ELI}\ref{L:ELI:6} imply that $\gamma$ is extended locally bounded on~$D$.
\end{proof}

\begin{corollary}[$L^1$--boundedness] \label{C:convYX2}
Suppose Assumption~\ref{A:2} holds, and fix $c\ne0$, $\eta\in(0,1)$, and $\kappa>0$. The following conditions are equivalent:
\begin{enumerate}[label={\rm(\alph*)},ref={\rm(\alph*)}]
\item \label{T:convYX2:a} $\lim_{t\to\tau}X_t$ exists in $\R$ and $-\log(1+x)\oo_{x< -\eta}*\nu^X_{\tau-} < \infty$.
\item\label{T:convYX2:b} $x\oo_{x>\kappa} * \nu^X_{\tau-}< \infty$ and for some extended locally integrable optional process $U$ on $\lc0,\tau\lc$ we have
\begin{equation}  \label{eq:T:convYX:eYp}
\sup_{\sigma \in \mathcal{T}} \E\left[e^{cY_\sigma - U_\sigma}  \1{\sigma<\tau} \right] < \infty.
\end{equation}
\end{enumerate}
If $c\ge 1$, these conditions are implied by the following:
\begin{enumerate}[label={\rm(\alph*)},ref={\rm(\alph*)},resume]
\item\label{T:convYX2:c} \eqref{eq:T:convYX:eYp} holds for some extended locally bounded optional process $U$ on $\lc0,\tau\lc$.
\end{enumerate}
Finally, the conditions \ref{T:convYX2:a}--\ref{T:convYX2:b} imply that $(e^{cY_\sigma-U_\sigma})_{\sigma\in\Tcal}$ is bounded for some extended locally integrable optional process $U$ on $\lc0,\tau\lc$.
\end{corollary}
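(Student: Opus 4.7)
The plan is to mirror the structure of Corollary~\ref{C:conv001}, with $Y$ playing the role of $X$, and to use Theorem~\ref{T:convYX} as a translation tool. Since $Y$ is a local martingale on $\lc0,\tau\lc$ under Assumption~\ref{A:2}, Assumption~\ref{A:1} holds for $Y$ with $A\equiv 0$, so Corollaries~\ref{C:conv001} and~\ref{C:conv1} can be applied directly to $Y$. The key observation is the equality of the events \eqref{T:convYX:3} and \eqref{T:convYX:4} from Theorem~\ref{T:convYX}, which shows that \ref{T:convYX2:a} is equivalent to the combination of $\lim_{t\to\tau}Y_t$ existing in $\R$ and $x\oo_{x>\kappa}*\nu^X_{\tau-}<\infty$.

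For \ref{T:convYX2:a}~$\Longrightarrow$~\ref{T:convYX2:b} (and simultaneously the final boundedness assertion), Lemma~\ref{L:convYX} gives that $Y^+$ and $Y^-$ are extended locally integrable, so $|Y|$ and hence $\sup_{s\leq\cdot}|Y_s|$ are finite on $\Omega$. Because the running supremum is predictable (via the convention of Section~\ref{S:prelim}), Lemma~\ref{L:ELI}\ref{L:ELI:4} renders it extended locally bounded. I would then set $U_t=|c|\sup_{s\leq t}Y_s$ for $c>0$ and $U_t=|c|\sup_{s\leq t}(-Y_s)$ for $c<0$, obtaining a nondecreasing, extended locally bounded $U$ with $cY_\sigma-U_\sigma\leq 0$, hence $e^{cY_\sigma-U_\sigma}\leq 1$ uniformly in $\sigma\in\Tcal$. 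This establishes \eqref{eq:T:convYX:eYp} and the uniform boundedness of $(e^{cY_\sigma-U_\sigma})_{\sigma\in\Tcal}$ in one stroke.

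For \ref{T:convYX2:b}~$\Longrightarrow$~\ref{T:convYX2:a}, I would apply Corollary~\ref{C:conv001} to $Y$ with $f(x)=e^{|c|x}$ and $V=U/c$ in place of $U$, noting that $V$ is extended locally integrable whenever $U$ is. For $c>0$ the hypothesis \eqref{eq:T:convYX:eYp} becomes exactly condition \ref{T:conv1:c} of Corollary~\ref{C:conv001}, while for $c<0$ the identity $cY-U=|c|(V-Y)$ puts us into \ref{T:conv1:d}. In either case Corollary~\ref{C:conv001} yields $\lim_{t\to\tau}Y_t\in\R$, which combined with the hypothesis $x\oo_{x>\kappa}*\nu^X_{\tau-}<\infty$ and Theorem~\ref{T:convYX} gives \ref{T:convYX2:a}.

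The remaining implication \ref{T:convYX2:c}~$\Longrightarrow$~\ref{T:convYX2:b} (for $c\geq 1$) is the most delicate, since the strengthened hypothesis on $U$ must be used to produce the tail bound $x\oo_{x>\kappa}*\nu^X_{\tau-}<\infty$. I would apply Corollary~\ref{C:conv1} to $Y$ with $V=U/c$ (still extended locally bounded, since $c\geq 1$) to obtain $(e^y-1-y)*\nu^Y_{\tau-}<\infty$. The main obstacle is transferring this estimate from $\nu^Y$ to $\nu^X$: using the jump identity $\Delta Y_t=\log(1+\Delta X_t)+\gamma_t$ with $\gamma\geq 0$, and accounting separately for predictable atoms of $\nu^X$, one identifies $(e^y-1-y)*\nu^Y_{\tau-}$ with $((1+x)e^{\gamma}-1-\log(1+x)-\gamma)*\nu^X_{\tau-}$, whose integrand dominates $x-\log(1+x)$ on $\{x>0\}$. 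Since $(x-\log(1+x))/x$ is nondecreasing on $(0,\infty)$, the resulting finiteness of $(x-\log(1+x))\oo_{x>\kappa}*\nu^X_{\tau-}$ upgrades to the desired $x\oo_{x>\kappa}*\nu^X_{\tau-}<\infty$, completing \ref{T:convYX2:b}.
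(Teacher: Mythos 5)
Your proof is correct and follows essentially the same route as the paper's: Theorem~\ref{T:convYX} translates \ref{T:convYX2:a} into convergence of $Y$ plus the tail condition on $\nu^X$, Corollary~\ref{C:conv001} applied to $Y$ (with the sign of $c$ handled via its conditions \ref{T:conv1:c}/\ref{T:conv1:d}) gives the equivalence with \ref{T:convYX2:b} and the final boundedness claim, and Corollary~\ref{C:conv1} applied to $Y$ yields $(e^y-1-y)*\nu^Y_{\tau-}<\infty$ for \ref{T:convYX2:c} $\Longrightarrow$ \ref{T:convYX2:b}; your subsequent integrand comparison (dominating $x-\log(1+x)$ on $\{x>0\}$ and using monotonicity of $(x-\log(1+x))/x$) is an equivalent reorganization of the paper's estimate, which instead bounds $(e^y-1)\oo_{y>\log(1+\kappa)}*\nu^Y$ and compares jump measures after localization. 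One small slip: $\sup_{s\le\cdot}Y_s$ is optional but not predictable (only the left-continuous supremum is), so Lemma~\ref{L:ELI}\ref{L:ELI:4} does not yield extended local \emph{boundedness} of your $U$; however, only extended local \emph{integrability} is needed in \ref{T:convYX2:b} and in the final assertion, and that follows directly from the extended local integrability of $Y$ supplied by Lemma~\ref{L:convYX}.
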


\begin{proof}
The equivalence of \ref{T:convYX2:a} and \ref{T:convYX2:b} is obtained from \eqref{T:convYX:3} = \eqref{T:convYX:4} in Theorem~\ref{T:convYX}. Indeed, Corollary~\ref{C:conv001} with $X$ replaced by $Y$ and $f(x)=e^{cx}$, together with Lemma~\ref{L:convYX}, yield that \ref{T:convYX2:b} holds if and only if \eqref{T:convYX:4} has full probability. In order to prove that \ref{T:convYX2:c} implies~\ref{T:convYX2:b}  we assume that~\eqref{eq:T:convYX:eYp} holds with $c\geq 1$ and $U$ extended locally bounded. Corollary~\ref{C:conv1} yields
\[
\left(1-\frac{1}{\kappa}\log (1+\kappa)\right)\, (e^y-1)\oo_{y>\log (1+\kappa)}*\nu^Y_{\tau-}\le(e^y-1-y)*\nu^Y_{\tau-}<\infty,
\]
so by a localization argument using Lemma~\ref{L:ELI}\ref{L:ELI:4} we may assume that $(e^y-1)\oo_{y>\log (1+\kappa)}*\nu^Y_{\tau-}\le \kappa_1$ for some constant $\kappa_1>0$. Now, \eqref{eq:DY} yields
\[
\Delta X \1{\Delta X>\kappa} = \left(e^{\Delta Y - \gamma}-1\right)\1{e^{\Delta Y}>(1+\kappa)e^\gamma} \le (e^{\Delta Y}-1)\1{\Delta Y>\log (1+\kappa)},
\]
whence $\E[x\oo_{x>\kappa}*\nu^X_{\tau-}]\le \E[(e^y-1)\oo_{y>\log (1+\kappa)}*\nu^Y_{\tau-}]\le\kappa_1$. Thus~\ref{T:convYX2:b} holds. The last statement of the corollary follows as in Remark~\ref{R:bed implied} after recalling Lemma~\ref{L:convYX}.
\end{proof}

\subsection{Some auxiliary results}
In this subsection, we collect some observations that will be useful for the proofs of the convergence theorems of the previous subsection.

\begin{lemma}[Supermartingale convergence] \label{L:SMC}
Under Assumption~\ref{A:1}, suppose $\sup_{n \in \N} \E[X^-_{\tau_n}]<\infty$. Then the limit $G=\lim_{t\to\tau}X_t$ exists in~$\R$ and the process $\overline X=X\oo_{\lc0,\tau\lc} + G\oo_{\lc\tau,\infty\lc}$, extended to $[0,\infty]$ by $\overline X_\infty=G$, is a supermartingale on $[0,\infty]$ and extended locally integrable. If, in addition, $X$ is a local martingale on $\lc 0,\tau\lc$ then $\overline X$ is a local martingale.
\end{lemma}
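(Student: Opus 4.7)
The plan is to tackle four sub-claims in sequence: convergence of $X_t$ to an integrable limit, the supermartingale property of $\bar X$ on $[0,\infty]$, extended local integrability, and (conditionally on $X$ being a local martingale) the local martingale upgrade. After a diagonal localization argument in the spirit of Lemma~\ref{L:ELI}\ref{L:ELI:2}, I may assume without loss of generality that each $X^{\tau_n}$ is a true supermartingale (and a true martingale in the additional local martingale case). Optional stopping then shows that $(X_{\tau_n})_{n \in \N}$ is a discrete-parameter supermartingale with respect to $(\Fcal_{\tau_n})_{n \in \N}$, so Doob's classical supermartingale convergence theorem, combined with the hypothesis $\sup_n\E[X_{\tau_n}^-]<\infty$, yields $X_{\tau_n}\to G$ almost surely with $G\in L^1$. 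Doob's upcrossing inequality applied to each $X^{\tau_n}$ bounds the expected number of upcrossings of any interval $[a,b]$ by $(|a|+\sup_n\E[X_{\tau_n}^-])/(b-a)$; letting $n\to\infty$ via monotone convergence shows that $X$ upcrosses $[a,b]$ only finitely often on $\lc0,\tau\lc$, so $\lim_{t\to\tau}X_t$ exists in $[-\infty,\infty]$ almost surely, and by identification of the subsequential limit along $(\tau_n)$ it must equal $G$.

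Next I would establish the supermartingale property of $\bar X$ on $[0,\infty]$. The first step is to verify that each $X^{\tau_n}$ extends to a supermartingale on $[0,\infty]$ closed by $X_{\tau_n}$: since $(X^{\tau_n})^-$ is a submartingale closed by $X_{\tau_n}^-\in L^1$, it converges to $X_{\tau_n}^-$ in $L^1$, and Fatou's lemma (in its conditional form) applied to $(X^{\tau_n})^+_T$ combined with the supermartingale inequality at finite times $T$ yields $\E[X_{\tau_n}\mid\Fcal_t]\le X^{\tau_n}_t$. A similar conditional Fatou argument gives $\E[G^-\mid\Fcal_{\tau_k}]\ge X_{\tau_k}^-$, so $\{X_{\tau_k}^-\}_k$ is dominated by a uniformly integrable family and is itself uniformly integrable. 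For any fixed $t$, the submartingale bound $(\bar X^{\tau_k}_t)^-\le\E[X_{\tau_k}^-\mid\Fcal_t]$ then shows that $\{(\bar X^{\tau_k}_t)^-\}_k$ is uniformly integrable as well. A second application of the conditional Fatou lemma, to the nonnegative family $(\bar X^{\tau_k}_t)^+=\bar X^{\tau_k}_t+(\bar X^{\tau_k}_t)^-$ combined with the $L^1$-convergence of the negative parts, then gives
\[
\E[\bar X_t\mid\Fcal_s]\le\liminf_k\E[\bar X^{\tau_k}_t\mid\Fcal_s]\le\liminf_k\bar X^{\tau_k}_s=\bar X_s \quad\text{almost surely}
\]
for all $s\le t$ in $[0,\infty]$. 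Integrability of $\bar X_s$ follows by Fatou applied to the uniform $L^1$ bound $\E[|\bar X^{\tau_k}_s|]\le X_0+2\sup_k\E[X_{\tau_k}^-]$.

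For the extended local integrability, set $\sigma_n=\inf\{t\ge 0:|\bar X_t|\ge n\}$. Since $\bar X$ is c\`adl\`ag on $[0,\infty]$ with finite endpoint value $\bar X_\infty=G$, we have $\sup_t|\bar X_t|<\infty$ almost surely, so $\sigma_n=\infty$ eventually in $n$; that is, $\bigcup_n\{\sigma_n\ge\infty\}=\Omega$ almost surely. The elementary bound $\sup_t|\bar X^{\sigma_n}_t|\le n+|\bar X_{\sigma_n}|$ together with $\E[|\bar X_{\sigma_n}|]\le X_0+2\E[G^-]<\infty$, which follows from optional sampling applied to the supermartingale $\bar X$ closed by $G$, provides an integrable dominant and proves that $\bar X$ is extended locally integrable.

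Under the additional assumption that $X$ is a local martingale I would use the same $\sigma_n$: each stopped process $X^{\tau_k\wedge\sigma_n}$ is a true martingale dominated in absolute value by $n+|\bar X_{\sigma_n}|$, and converges pathwise to $\bar X^{\sigma_n}_t$ as $k\to\infty$. Dominated convergence yields $\E[\bar X^{\sigma_n}_t]=X_0$ for every $t\in[0,\infty)$, so the supermartingale $\bar X^{\sigma_n}$ has constant expectation and is therefore a true martingale on $[0,\infty)$, extending to a martingale on $[0,\infty]$ by a further dominated-convergence argument. Since $(\sigma_n)$ localizes to $\infty$ almost surely, $\bar X$ is a local martingale on $[0,\infty]$. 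The hardest step is the supermartingale property on the extended axis, where one must identify the correct uniform integrability statement flowing from the hypothesis $\sup_n\E[X_{\tau_n}^-]<\infty$; the key observation is that this hypothesis propagates via submartingale closability to the uniform integrability of $\{(\bar X^{\tau_k}_t)^-\}_k$ for each fixed $t$.
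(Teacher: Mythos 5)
Your convergence argument (upcrossings plus identification of the limit along $(\tau_n)$), your localization $\sigma_n=\inf\{t:|\overline X_t|\ge n\}$ for the extended local integrability, and your dominated-convergence upgrade to a local martingale all follow the same route as the paper. The gap is in your proof of the supermartingale property on $[0,\infty]$, specifically the step ``$\E[G^-\mid\Fcal_{\tau_k}]\ge X_{\tau_k}^-$''. Conditional Fatou applied to the nonnegative variables $X_{\tau_n}^-\to G^-$ gives $\E[G^-\mid\Fcal_{\tau_k}]\le\liminf_n\E[X_{\tau_n}^-\mid\Fcal_{\tau_k}]$, i.e.\ the \emph{opposite} inequality; what you are actually asserting is that the $L^1$--bounded nonnegative submartingale $(X_{\tau_n}^-)_n$ is closed by its almost sure limit, which is false in general. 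Concretely, let $Z$ be the reciprocal of a three-dimensional Bessel process started at $1$, and take $\tau=\infty$, $X=M=1-Z$, $A=0$, $\tau_n=n\wedge\inf\{t:Z_t\ge n\}$. Then $\E[X_{\tau_n}^-]\le\E[Z_{\tau_n}]=1$, so the hypothesis holds, but $G=1$, $\E[G^-\mid\Fcal_{\tau_k}]=0<X_{\tau_k}^-$ on $\{Z_{\tau_k}>1\}$, and $(X_{\tau_k}^-)_k=((Z_{\tau_k}-1)^+)_k$ converges a.s.\ to $0$ while its expectations tend to $1$, so it is not uniformly integrable. The whole chain built on that uniform integrability (including the integrability of $\overline X_{\sigma_n}$ via optional sampling of the ``closed'' supermartingale) therefore collapses.

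The failure is not an accident of your method: under the literal hypothesis $\sup_n\E[X_{\tau_n}^-]<\infty$ the conclusion itself breaks down, since in the example above $\E[\overline X_t]=1-\E[Z_t]$ is strictly increasing, so $\overline X$ is not a supermartingale even on $[0,\infty)$. The paper's own (very terse) proof invokes ``Fatou's lemma, applied as in Theorem~1.3.15 of Karatzas and Shreve'', which presupposes a one-sided integrable bound; and in every place the lemma is actually used in the paper one has first localized so that $X\ge-\Theta$ for an integrable random variable $\Theta$, i.e.\ $X^-$ is \emph{dominated} by an integrable random variable --- strictly stronger than $L^1$--boundedness of $(X_{\tau_n}^-)_n$. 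Under that domination your uniform-integrability detour is unnecessary: a single application of conditional Fatou to $X^{\tau_n}_t+\Theta\ge0$ yields $\E[\overline X_t\mid\Fcal_s]\le\overline X_s$ for all $s\le t\le\infty$, and the remainder of your proof goes through verbatim. So you should either work under the dominated form of the hypothesis (which is how the lemma is deployed) or abandon the attempt to derive closability from $L^1$--boundedness of the negative parts alone.
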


\begin{proof}
Supermartingale convergence implies that $G$ exists; see the proof of Proposition~A.4 in~\citet{CFR2011} for a similar statement. 
 Fatou's lemma, applied as in Theorem~1.3.15 in \citet{KS1}, yields the integrability of $\overline X_\rho$ for each $[0,\infty]$--valued stopping time $\rho$, as well as the supermartingale property of $\overline{X}$.
Now, define a sequence of stopping times $(\rho_m)_{m \in \N}$ by
\[
\rho_m = \inf\{ t\ge 0: |\overline{X}_t| > m \}
\]
and note that $\bigcup_{m\in\N}\{\rho_m=\infty\}=\Omega$.  Thus, $\overline{X}$ is extended locally integrable, with the corresponding sequence $(|\overline{X}_{\rho_m}| + m)_{m \in \N}$ of integrable random variables.

Assume now that $X$ is a local martingale and, without loss of generality, that  $\overline{X}^{\tau_n}$ is a uniformly integrable martingale for each $n \in \N$. 
Fix $m \in \N$ and note that  $\lim_{n\to\infty} \overline{X}_{\rho_m\wedge\tau_n}=\overline{X}_{\rho_m}$. Next, the inequality $|\overline{X}_{\rho_m \wedge \tau_n}| \leq |\overline{X}_{\rho_m}| + m$ for each $n \in \N$ justifies an application of dominated convergence as follows:
\[
E\left[\overline{X}_{\rho_m}\right] = E\left[\lim_{n \to \infty} \overline{X}_{\rho_m \wedge \tau_n}\right]  = \lim_{n \to \infty} E\left[\overline{X}_{\rho_m \wedge \tau_n}\right] = 0.
\]
Hence, $\overline{X}$ is a local martingale, with localizing sequence $(\rho_m)_{m \in \N}$.
\end{proof}

For the proof of the next lemma, we are not allowed to use Corollary~\ref{C:convergence_QV}, as it relies on Theorem~\ref{T:conv}, which we have not yet proved.
\begin{lemma}[Continuous case] \label{L:CC}
Let $X$ be a continuous local martingale on $\lc0,\tau\lc$. If $[X,X]_{\tau-}<\infty$ then the limit $\lim_{t\to\tau}X_t$  exists in $\R$.
\end{lemma}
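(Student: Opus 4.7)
The plan is to reduce to the case where $[X,X]_{\tau-}$ is bounded and then apply Doob's $L^2$ maximal inequality to differences $X^{\tau_l}-X^{\tau_k}$ along an announcing sequence, showing that the path of $X$ is Cauchy as $t\to\tau-$.

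First I would reduce to a uniform bound on the quadratic variation. Since $[X,X]$ is continuous and nondecreasing on $\lc0,\tau\lc$, the hitting time $\sigma_n=\inf\{t\ge 0:[X,X]_t\ge n\}$ is a stopping time, and $\{[X,X]_{\tau-}<\infty\}\subset\bigcup_{n\in\N}\{\sigma_n\ge\tau\}$. On $\{\sigma_n\ge\tau\}$ we have $X=X^{\sigma_n}$ on $\lc0,\tau\lc$, so it suffices to prove the result after replacing $X$ by $X^{\sigma_n}$, i.e.\ under the extra assumption that $[X,X]_{\tau-}\le n$ almost surely.

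Next, pick an announcing sequence $(\tau_k)_{k\in\N}$ for $\tau$. Under the reduction, each $X^{\tau_k}$ is a continuous local martingale on $[0,\infty)$ with $[X^{\tau_k},X^{\tau_k}]_\infty\le n$, hence a genuine $L^2$-bounded martingale. For $k<l$, the difference $X^{\tau_l}-X^{\tau_k}$ is a continuous $L^2$ martingale whose supremum over $t\ge 0$ equals $\sup_{t\in[\tau_k,\tau_l]}|X_t-X_{\tau_k}|$. Doob's $L^2$ maximal inequality and the It\^o isometry therefore give
\[
\E\Big[\sup_{t\in[\tau_k,\tau_l]}(X_t-X_{\tau_k})^2\Big]\le 4\,\E\bigl[[X,X]_{\tau_l}-[X,X]_{\tau_k}\bigr].
\]
Both sides are monotone in $l$, and the right-hand side is bounded by $4n$; letting $l\to\infty$ and using monotone convergence on each side (with $[X,X]_{\tau_l}\nearrow[X,X]_{\tau-}\le n$) yields
\[
\E\Big[\sup_{t\in[\tau_k,\tau)}(X_t-X_{\tau_k})^2\Big]\le 4\,\E\bigl[[X,X]_{\tau-}-[X,X]_{\tau_k}\bigr].
\]

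Finally, since $[X,X]_{\tau_k}\nearrow[X,X]_{\tau-}\le n$, bounded convergence shows that the right-hand side tends to $0$ as $k\to\infty$. Hence $\sup_{t\in[\tau_k,\tau)}|X_t-X_{\tau_k}|\to 0$ in $L^2$, and therefore in probability; being monotone nonincreasing in $k$, the convergence is in fact almost sure. This forces the path $(X_t)_{t<\tau}$ to be Cauchy as $t\to\tau-$, so $\lim_{t\to\tau}X_t$ exists in $\R$. The only point requiring care is the stochastic-interval bookkeeping --- verifying that $\sigma_n$ is a stopping time on $\lc0,\tau\lc$ and that each $X^{\tau_k}$ is a bona fide $L^2$ martingale on $[0,\infty)$ so that classical Doob applies; the rest is a standard Cauchy-in-$L^2$ argument.
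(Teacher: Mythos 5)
The paper does not actually prove this lemma; it simply cites Exercise~IV.1.48 in Revuz--Yor. Your argument is the standard self-contained proof that this exercise has in mind (stop when the quadratic variation hits level $n$, then use the $L^2$ isometry and Doob's maximal inequality to get a Cauchy criterion), and all the substantive steps are correct: the reduction via $\sigma_n$, the identification of $X^{\tau_l}-X^{\tau_k}$ as an $L^2$-bounded continuous martingale with terminal second moment $\E[[X,X]_{\tau_l}-[X,X]_{\tau_k}]$, and the two monotone-convergence passages.

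One small imprecision in the last step: the quantity $D_k=\sup_{t\in[\tau_k,\tau)}|X_t-X_{\tau_k}|$ is \emph{not} monotone in $k$, because the reference point $X_{\tau_k}$ changes with $k$ (one only gets $D_{k+1}\le 2D_k$ in general), so you cannot upgrade convergence in probability to almost sure convergence by monotonicity of $D_k$ itself. The fix is immediate: either pass to a subsequence along which $D_k\to0$ almost surely (which already forces the path to be Cauchy, since $|X_t-X_s|\le 2D_{k_j}$ for $s,t\in[\tau_{k_j},\tau)$), or observe that the oscillation $\mathrm{osc}_k=\sup_{s,t\in[\tau_k,\tau)}|X_t-X_s|\le 2D_k$ \emph{is} nonincreasing in $k$ and tends to $0$ in probability, hence almost surely. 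With that one-line repair the proof is complete.
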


\begin{proof}
See Exercise~IV.1.48 in \citet{RY}.
\end{proof}

The next lemma will serve as a tool to handle truncated jump measures.

\begin{lemma}[Bounded jumps] \label{L:BJ}
Let $\mu$ be an integer-valued random measure such that $\mu(\R_+ \times [-1,1]^c) = 0$, and let $\nu$ be its compensator. Assume either $x^2*\mu_{\infty-}$ or $x^2*\nu_{\infty-}$ is finite. Then so is the other one, we have $x\in G_{\rm loc}(\mu)$, and the limit $\lim_{t\to\infty}x*(\mu-\nu)_t$  exists in $\R$.
\end{lemma}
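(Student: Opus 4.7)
The plan is to exploit the key feature that $\mu$, and hence also its compensator $\nu$, is supported on $\R_+\times[-1,1]$, so that both $x^2*\mu$ and $x^2*\nu$ are nondecreasing processes whose jumps are bounded by one. The bridge between them is the defining property of the compensator, namely $\E[(f*\mu)_\infty] = \E[(f*\nu)_\infty]$ for nonnegative predictable $f$, applied after a suitable localization.

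To show that finiteness of $x^2*\mu_{\infty-}$ is equivalent to finiteness of $x^2*\nu_{\infty-}$, I would first assume $x^2*\mu_{\infty-}<\infty$ almost surely and introduce the stopping times $\rho_n=\inf\{t\ge 0: (x^2*\mu)_t > n\}$. The bound $\Delta(x^2*\mu)\le 1$ gives $(x^2*\mu)^{\rho_n}\le n+1$, and the events $\{\rho_n=\infty\}$ exhaust a full-probability subset of $\Omega$. Since the compensator of the stopped process is $(x^2*\nu)^{\rho_n}$, one obtains $\E[(x^2*\nu)^{\rho_n}_\infty]\le n+1$ and hence $x^2*\nu_{\infty-}<\infty$ on $\{\rho_n=\infty\}$. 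Letting $n\to\infty$ yields the desired conclusion. For the converse, I would use the stopping times $\rho_n=\inf\{t\ge 0:(x^2*\nu)_t>n\}$, noting that $x^2*\nu$ is adapted and has jumps bounded by one (since $\nu(\{t\},\R)\le 1$ for the compensator of an integer-valued random measure), so the same reasoning applies symmetrically.

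For the final two assertions, I would localize by the $\rho_n$ obtained from either side. On each stopped interval, $x^2*\mu^{\rho_n}$ is bounded, so $\sqrt{x^2*\mu}$ is locally bounded; in view of $|x|\le 1$, the truncation appearing in the general definition of $G_{\rm loc}(\mu)$ (Definition~II.1.27 of~\cite{JacodS}) is essentially vacuous and membership follows from this local boundedness. Moreover, each stopped compensated integral $(x*(\mu-\nu))^{\rho_n}$ is a purely discontinuous local martingale whose quadratic variation $(x^2*\mu)^{\rho_n}$ is bounded by $n+1$, hence a square-integrable martingale closed by an $L^2$ limit at infinity. Consequently $(x*(\mu-\nu))^{\rho_n}_t$ converges almost surely as $t\to\infty$; because $\{\rho_n=\infty\}\uparrow\Omega$ up to a null set, $x*(\mu-\nu)_t$ itself converges almost surely in $\R$.

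The main subtlety I expect is the bookkeeping for the compensator of the stopped process and verifying that the bounded-jumps hypothesis genuinely trivialises the defining condition for $G_{\rm loc}(\mu)$; once these points are settled, the remainder is a clean application of the $L^2$-theory for purely discontinuous martingales with bounded predictable quadratic variation.
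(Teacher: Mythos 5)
Your proof is correct in substance, and its first half takes a genuinely different route from the paper's. For the equivalence of $\{x^2*\mu_{\infty-}<\infty\}$ and $\{x^2*\nu_{\infty-}<\infty\}$ you stop at the first crossing time $\rho_n$ of whichever of the two increasing processes is assumed finite, use the bounded-jumps property to get $(x^2*\mu)^{\rho_n}\le n+1$ (resp.\ for $\nu$), and transfer the bound to the other process through the defining identity $\E[(f*\mu)_\infty]=\E[(f*\nu)_\infty]$ for nonnegative predictable $f$. The paper instead forms the local martingale $F=x^2*\mu-x^2*\nu$, stops when $x^2*\nu$ crosses $n$ so that $F^{\rho_n}\ge -n-1$, and invokes supermartingale convergence to conclude that $F$, hence $x^2*\mu$, converges on $\{\rho_n=\infty\}$. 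Your version is more elementary (monotone convergence plus the definition of the compensator, no convergence theorem), and it delivers the same event-wise inclusions; both work. The convergence of $x*(\mu-\nu)$ is then handled the same way in both arguments: localize and appeal to $L^2$--martingale theory, the paper citing Theorem~II.1.33 of \citet{JacodS} for $x\in G_{\rm loc}(\mu)$ and for $\langle x*(\mu-\nu),x*(\mu-\nu)\rangle\le x^2*\nu$.

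Two points need tightening. First, $[N,N]$ for $N=x*(\mu-\nu)$ is \emph{not} $x^2*\mu$ in general: $\Delta N_t=\int x\,\mu(\{t\},\dd x)-\int x\,\nu(\{t\},\dd x)$, and the compensator term is nonzero whenever $\nu$ charges fixed times (which it does in relevant examples in this paper, e.g.\ Subsection~6.1). The fix is immediate: by Jensen's inequality and $\nu(\{t\},\R)\le1$ one has $[N,N]\le 2\,x^2*\mu+2\,x^2*\nu$, so $\E\bigl[[N,N]^{\rho_n}_{\infty}\bigr]<\infty$ and $N^{\rho_n}$ is still an $L^2$--bounded, hence almost surely convergent, martingale. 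Second, the claim that the truncation in the definition of $G_{\rm loc}(\mu)$ is ``essentially vacuous'' is too quick for the same reason: one must still control $\hat x_t=\int x\,\nu(\{t\},\dd x)$. The clean statement is that local integrability of $x^2*\nu$ (equivalently of $x^2*\mu$, which your localization provides) implies $x\in G_{\rm loc}(\mu)$; this is exactly the content of Theorem~II.1.33 in \citet{JacodS}, which is what the paper cites. Neither issue affects the validity of your overall strategy.
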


\begin{proof}
First, the condition on the support of $\mu$ implies that both $x^2*\mu$ and $x^2*\nu$ have jumps bounded by one. 
Now, let $\rho_n$ be the first time $x^2*\nu$ crosses some fixed level $n \in \N$, and consider the local martingale $F=x^2*\mu-x^2*\nu$. Since $F^{\rho_n}\ge-n-1$, the supermartingale convergence theorem implies that $F^{\rho_n}_{\infty-}$ exists in $\R$, whence $x^2*\mu_{\infty-}=F_{\infty-}+x^2*\nu_{\infty-}$ exists and is finite on $\{\rho_n=\infty\}$. This yields
\begin{equation*}
\left\{ x^2*\nu_{\infty-}<\infty\right\}  \subset\left\{ x^2*\mu_{\infty-}<\infty\right\}.
\end{equation*}
The reverse inclusion is proved by interchanging $\mu$ and $\nu$ in the above argument.

Next, the local boundedness of $x^2*\nu$ implies that $x*(\mu-\nu)$ is well-defined and a local martingale with $\langle x*(\mu-\nu),x*(\mu-\nu)\rangle\le x^2*\nu$; see Theorem~II.1.33 in \citet{JacodS}. Hence, for each $n \in \N$, with $\rho_n$ as above, $x*(\mu-\nu)^{\rho_n}$ is a uniformly integrable martingale and thus convergent. Therefore $x*(\mu-\nu)$ is convergent on the set $\{\rho_n=\infty\}$, which completes the argument.
\end{proof}

\subsection{Proof of Theorem~\ref{T:conv}}

We start by proving that {\ref{T:conv:a}}  yields that $\Delta X$ is extended locally integrable on $D$. By localization, in conjunction with Lemma~\ref{L:ELI}\ref{L:ELI:2}, we may assume that $(\Delta X)^-\wedge X^-\le\Theta$ for some integrable random variable $\Theta$ and that $\sup_{t<\tau} |X_t| < \infty$. With $\rho_n=\inf\{t \geq 0:  X_t\le-n\}$ we have $X^{\rho_n}\ge -n-(\Delta X_{\rho_n})^-\1{\rho_n<\tau}$ and $X^{\rho_n}\ge -n-X_{\rho_n}^-\1{\rho_n<\tau}$. Hence $X^{\rho_n}  \geq -n-\Theta$ and thus, by Lemma~\ref{L:SMC}, $X^{\rho_n}$ is extended locally integrable and Lemma~\ref{L:ELI}\ref{L:ELI:3} yields that $\Delta X^{\rho_n}$ is as well, for each $n \in \N$. We have $\bigcup_{n\in\N}\{\rho_n=\tau\}=\Omega$, and another application of Lemma~\ref{L:ELI}\ref{L:ELI:2} yields the implication.

We now verify the claimed implications.

{\ref{T:conv:a}} $\Longrightarrow$ {\ref{T:conv:a'}}:  Obvious.

{\ref{T:conv:a'}} $\Longrightarrow$ {\ref{T:conv:a}}:   By localization we may assume that $(\Delta X)^-\wedge X^-\le\Theta$ for some integrable random variable $\Theta$ and that $\sup_{t<\tau} X_t^- < \infty$ on $\Omega$.
 With $\rho_n=\inf\{t \geq 0:  X_t\le-n\}$ we have $X^{\rho_n}  \geq -n-\Theta$, for each $n \in \N$.
 The supermartingale convergence theorem (Lemma~\ref{L:SMC}) now implies that $X$ converges.

{\ref{T:conv:a}} $\Longrightarrow$ {\ref{T:conv:b'}}: This is an application of Lemma~\ref{L:ELI}\ref{L:ELI:3}, after recalling that {\ref{T:conv:a}} implies that $\Delta X$ is extended locally integrable on $D$.

{\ref{T:conv:b'}} $\Longrightarrow$ {\ref{T:conv:a}}: This is an application of a localization argument and the supermartingale convergence theorem stated in Lemma~\ref{L:SMC}.

{\ref{T:conv:a}} $\Longrightarrow$ {\ref{T:conv:b''}} \& {\ref{T:conv:c}} \& {\ref{T:conv:f}}:
By localization, we may assume that $|\Delta X|\le \Theta$ for some integrable random variable $\Theta$ and that $X = X^{\rho}$ with $\rho=\inf\{t \geq 0:|X_t|\ge \kappa\}$ for some fixed $\kappa \geq 0$. Next, observe that $X\ge -\kappa-\Theta$. Lemma~\ref{L:SMC} yields that $G=\lim_{t\to \tau}X_t$ exists in $\R$ and that the process $\overline X=X\oo_{\lc0,\tau\lc}+G\oo_{\lc\tau,\infty\lc}$, extended to $[0,\infty]$ by $\overline X_\infty=G$, is a supermartingale on $[0,\infty]$. Let $\overline X=\overline M-\overline A$ denote its canonical decomposition. Then $A_{\tau-}=\overline A_\infty<\infty$ and $[X,X]_{\tau-}= [\overline X,\overline X]_\infty < \infty$. Moreover, since $\overline X$ is a special semimartingale on $[0,\infty]$, Proposition~II.2.29 in~\cite{JacodS} yields $(x^2\wedge|x|)*\nu^X_{\tau-}=(x^2\wedge|x|)*\nu^{\overline X}_\infty<\infty$. Thus~{\ref{T:conv:c}} and~{\ref{T:conv:f}} hold. Now, {\ref{T:conv:b''}} follows again by an application of Lemma~\ref{L:ELI}\ref{L:ELI:3}.

{\ref{T:conv:b''}} $\Longrightarrow$ {\ref{T:conv:a}}:  By Lemma~\ref{L:ELI}\ref{L:ELI:4} we may assume that $A = 0$, so that $-X$ is a local supermartingale. The result then follows again from Lemma~\ref{L:SMC} and  Lemma~\ref{L:ELI}\ref{L:ELI:3}.

{\ref{T:conv:c}} $\Longrightarrow$ {\ref{T:conv:a}}:
The process $B=[X^c,X^c]+(x^2\wedge|x|)*\nu^X+A$ is predictable and converges on~$D$. Hence, by Lemma~\ref{L:ELI}\ref{L:ELI:4}, $B$ is  extended locally bounded on $D$. By localization we may thus assume that $B\le\kappa$ for some constant $\kappa>0$. Lemma~\ref{L:CC} implies that $X^c$ converges and Lemma~\ref{L:BJ} implies that $x\oo_{|x|\le 1}*(\mu^X-\nu^X)$ converges. Furthermore,
\[
\E\left[\, |x|\oo_{|x|>1}*\mu^X_{\tau-}\, \right] = \E\left[\, |x|\oo_{|x|>1}*\nu^X_{\tau-}\, \right] \le \kappa,
\]
whence $|x|\oo_{|x|>1}*(\mu^X-\nu^X)=|x|\oo_{|x|>1}*\mu^X-|x|\oo_{|x|>1}*\nu^X$ converges. We deduce that~$X$ converges. It now suffices to show that $\Delta X$ is extended locally integrable. Since
\begin{align*}
\sup_{t<\tau} |\Delta X_t|
\leq 1 + |x|\oo_{|x|\ge 1}*\mu^X_{\tau-},
\end{align*}
we have $\E[\sup_{t<\tau} |\Delta X_t|] \le 1+\kappa$. 

{\ref{T:conv:f}} $\Longrightarrow$ {\ref{T:conv:a}}:  By a localization argument we may assume that $(\Delta X)^- \wedge X^-\leq \Theta$ for some integrable random variable~$\Theta$.  Moreover, since $[X,X]_{\tau-}<\infty$ on $D$, $X$ can only have finitely many large jumps on $D$. Thus after further localization we may assume that $X = X^{\rho}$, where $\rho=\inf\{t \geq 0:|\Delta X_t|\ge \kappa_1\}$ for some large $\kappa_1>0$. Now, Lemmas~\ref{L:CC} and~\ref{L:BJ} imply that  $X' = X^c + x \oo_{|x| \leq \kappa_1} * (\mu^X-\nu^X)$  converges on $D$.  Hence Lemma~\ref{L:ELI}\ref{L:ELI:3} and a further localization argument let us assume that $|X'|\le\kappa_2$ for some constant $\kappa_2>0$.  Define $\widehat X = x \oo_{x<-\kappa_1} * (\mu^X-\nu^X)$ and suppose for the moment we know that~$\widehat X$ converges on~$D$. Consider the decomposition
\begin{equation} \label{eq:T:conv:ftob}
X = X' + \widehat X + x\oo_{x>\kappa_1}*\mu^X-x\oo_{x>\kappa_1}*\nu^X - A.
\end{equation}
The first two terms on the right-hand side converge on $D$, as does the third term since $X=X^\rho$. However, since $\limsup_{t\to\tau}X_t>-\infty$ on $D$ by hypothesis, this forces also the last two terms to converge on~$D$, and we deduce~\ref{T:conv:a} as desired.  It remains to prove that $\widehat X$ converges on~$D$, and for this we will rely repeatedly on the equality $X=X^\rho$ without explicit mentioning. In view of~\eqref{eq:T:conv:ftob} and the bound $|X'|\le\kappa_2$, we have
\[
\widehat X  \geq X - \kappa_2  -  x \oo_{x>\kappa_1} * \mu^X = X - \kappa_2 -(\Delta X_\rho)^+\oo_{\lc\rho,\tau\lc}.
\]
Moreover,  by definition of $\widehat X$  and $\rho$ we have $\widehat X\ge 0$ on $\lc0,\rho\lc$; hence $\widehat X\ge \Delta X_\rho\oo_{\lc\rho,\tau\lc}$. We deduce that on $\{\rho<\tau\text{ and }\Delta X_\rho<0\}$ we have $\widehat X^-\le X^-+\kappa_2$ and $\widehat X^-\le(\Delta X)^-$. On the complement of this event, it follows directly from the definition of $\widehat X$ that $\widehat X\ge0$. To summarize, we have $\widehat X^- \leq   (\Delta X)^-  \wedge  X^-  +  \kappa_2 \leq \Theta + \kappa_2.$  Lemma~\ref{L:SMC} now implies that $\widehat X$ converges, which proves the stated implication.

{\ref{T:conv:a}} \& {\ref{T:conv:f}} $\Longrightarrow$    {\ref{T:conv:g}}: We now additionally assume that $X$ is constant on $\lc\tau_J,\tau\lc$. First, note that $\Ecal(X)$ changes sign finitely many times on $D$ since $ \oo_{x < -1} * \mu_{\tau-}^X \leq  x^2  *\mu_{\tau-}^X < \infty$ on $D$. Therefore, it is sufficient to check that $\lim_{t\to\tau}|\Ecal(X)_t|$ exists in $(0,\infty)$ on $D \cap \{\tau_J = \infty\}.$  However, this follows from the fact that  $\log|\Ecal(X)|=X - [X^c,X^c] /2 - (x - \log |1+x|)  * \mu^X$ on $\lc 0, \tau_J\lc$ and the inequality $x-\log(1+x)\le x^2$ for all $x\ge-1/2$.

{\ref{T:conv:g}}  $\Longrightarrow$    {\ref{T:conv:a'}}: Note that we have  $\lim_{t \to \tau} X_t - [X^c,X^c]_t/2 - (x - \log (1+x))\oo_{x>-1} * \mu_t^X$ exists in $\R$ on $D$, which then yields the implication.
\qed

\subsection{Proof of Theorem~\ref{T:convYX}}

The proof relies on a number of intermediate lemmas. We start with a special case of Markov's inequality that is useful for estimating conditional probabilities in terms of unconditional probabilities. This inequality is then applied in a general setting to control conditional probabilities of excursions of convergent processes.

\begin{lemma}[A Markov type inequality] \label{L:cprob}
Let $\Gcal\subset\Fcal$ be a sub-sigma-field, and let $G\in\Gcal$, $F\in\Fcal$, and $\delta>0$. Then
\[
\P\left(\oo_G\,\P( F\mid\Gcal ) \geq \delta \right)\ \le\ \frac{1}{\delta}\P(G \cap F).
\]
\end{lemma}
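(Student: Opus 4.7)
The plan is to recognize this as a direct application of Markov's inequality to the nonnegative $\Gcal$-measurable random variable $Y = \oo_G\, \P(F \mid \Gcal)$.

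First I would invoke the classical Markov inequality, which gives
\[
\P(Y \geq \delta) \leq \frac{1}{\delta}\, \E[Y].
\]
Then the task reduces to computing $\E[Y]$. Since $\oo_G$ is $\Gcal$-measurable, it can be pulled inside the conditional expectation, yielding
\[
\E[Y] = \E\bigl[\oo_G\,\P(F \mid \Gcal)\bigr] = \E\bigl[\E[\oo_G \oo_F \mid \Gcal]\bigr] = \E[\oo_G \oo_F] = \P(G \cap F)
\]
by the tower property. Combining the two displays gives the claim.

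There is no real obstacle here; the only substantive observation is that $\oo_G$ is $\Gcal$-measurable so that it can be absorbed into the conditional expectation. The result is essentially bookkeeping, and its utility comes from the subsequent applications to bounding conditional probabilities of excursions, as flagged in the preamble to the lemma.
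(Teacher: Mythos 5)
Your proposal is correct and is essentially identical to the paper's proof: both apply Markov's inequality to the nonnegative random variable $\oo_G\,\P(F\mid\Gcal)$ and use the $\Gcal$-measurability of $\oo_G$ together with the tower property to identify its expectation as $\P(G\cap F)$. No gaps.
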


\begin{proof} 
We have $\P(G\cap F) = \E\left[ \oo_G\,\P(F\mid\Gcal) \right] \ge \delta\, \P\left(\oo_G\,\P(F\mid\Gcal)\geq \delta\right)$.
\end{proof}

\begin{lemma} \label{L:cprob2}
Let $\tau$ be a foretellable time, let $W$ be a measurable process on $\lc0,\tau\lc$, and let $(\rho_n)_{n\in\N}$ be a nondecreasing sequence of stopping times with $\lim_{n\to\infty}\rho_n\ge\tau$. Suppose the event
\[
C=\Big\{\lim_{t\to\tau} W_t=0 \text{ and } \rho_n<\tau \text{ for all }n\in\N\Big\}
\]
lies in $\Fcal_{\tau-}$. Then for each $\varepsilon>0$,
\[
\P\left( W_{\rho_n} \le \varepsilon \mid \Fcal_{\rho_n-}\right) \ge \frac{1}{2} \quad\text{for infinitely many }n\in\N
\]
holds almost surely on $C$.
\end{lemma}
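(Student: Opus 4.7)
The plan is to argue by contradiction: if the conclusion fails, there exists $N \in \N$ such that
\[
	B := C \cap \bigcap_{n \geq N} E_n, \qquad E_n := \{\P(A_n \mid \Fcal_{\rho_n-}) < 1/2\} \in \Fcal_{\rho_n-}, \qquad A_n := \{W_{\rho_n} \leq \varepsilon\},
\]
has positive probability. The first task will be to establish the measurability inclusion $\Fcal_{\tau-} \subset \Gcal_\infty := \bigvee_{n \in \N} \Fcal_{\rho_n-}$, so that $B \in \Gcal_\infty$, since each $E_n \in \Gcal_\infty$ and $C \in \Fcal_{\tau-}$.

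For this inclusion, I will fix an announcing sequence $(\sigma_k)_{k \in \N}$ for $\tau$ and use the standard identity $\Fcal_{\tau-} = \sigma(\Fcal_0 \cup \bigcup_k \Fcal_{\sigma_k})$, reducing matters to showing $\Fcal_{\sigma_k} \subset \Gcal_\infty$ for each $k$. Given $A \in \Fcal_{\sigma_k}$, the decomposition $A \cap \{\sigma_k < \rho_n\} = \bigcup_{q \in \mathbb{Q}_+}\bigl(A \cap \{\sigma_k \leq q\} \cap \{q < \rho_n\}\bigr)$ places $A \cap \{\sigma_k < \rho_n\}$ in $\Fcal_{\rho_n-}$, because each set in the union lies in $\Fcal_q$ and is contained in $\{q < \rho_n\}$. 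Since $\lim_n \rho_n \geq \tau > \sigma_k$ on $\{\tau > 0\}$, the events $\{\sigma_k < \rho_n\}$ exhaust $\{\tau > 0\}$ as $n \to \infty$ almost surely, yielding $A \in \Gcal_\infty$ (modulo null behaviour on $\{\tau = 0\}$, where $\Fcal_0 \subset \Gcal_\infty$ and $C$ is empty because $\rho_n \geq 0$ must lie strictly below $\tau$ on $C$).

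With $B \in \Gcal_\infty$, Doob's martingale convergence theorem applied to the martingale $q_n := \E[\oo_B \mid \Fcal_{\rho_n-}]$ gives $q_n \to \oo_B$ both almost surely and in $L^1$. Two applications of the tower property, using that both $q_n$ and $\P(A_n^c \mid \Fcal_{\rho_n-})$ are $\Fcal_{\rho_n-}$-measurable, then yield
\[
	\E[q_n \oo_{A_n^c}] \;=\; \E[q_n \P(A_n^c \mid \Fcal_{\rho_n-})] \;=\; \E[\oo_B \P(A_n^c \mid \Fcal_{\rho_n-})] \;>\; \P(B)/2
\]
for every $n \geq N$, the strict inequality coming from $\P(A_n^c \mid \Fcal_{\rho_n-}) > 1/2$ on $B \subset E_n$. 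On the other hand, $\oo_B \oo_{A_n^c} \to 0$ pointwise because $W_{\rho_n} \to 0$ on $B \subset C$, so dominated convergence forces $\E[\oo_B \oo_{A_n^c}] \to 0$; combined with $q_n \to \oo_B$ in $L^1$ this yields $\E[q_n \oo_{A_n^c}] \to 0$, contradicting the lower bound. I expect the main obstacle to be the measurability inclusion $\Fcal_{\tau-} \subset \Gcal_\infty$, which is the only step that genuinely exploits the foretellable structure of $\tau$ and the hypothesis $\lim_n \rho_n \geq \tau$; everything else is a short chain of conditional-expectation identities together with dominated convergence.
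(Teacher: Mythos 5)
Your proof is correct, but it follows a genuinely different route from the paper's. The paper applies its Markov-type inequality (Lemma~\ref{L:cprob}) with $G_n=\{\P(C\mid\Fcal_{\rho_n-})>1/2\}$ and $F_n=\{W_{\rho_n}>\varepsilon,\ \rho_n<\tau\}$, uses L\'evy's theorem to get $\P(C\mid\Fcal_{\rho_n-})\to\oo_C$, and then runs a Borel--Cantelli argument along a subsequence; it also first replaces $\tau$ by an almost surely equal predictable time so that $\{\rho_n<\tau\}\in\Fcal_{\rho_n-}$, which is what lets it identify $\P(F_n\mid\Fcal_{\rho_n-})$ with $1-\P(W_{\rho_n}\le\varepsilon\mid\Fcal_{\rho_n-})$ on $C$. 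You instead argue by contradiction on a fixed positive-probability set $B\in\bigvee_n\Fcal_{\rho_n-}$, and play the $L^1$-convergence $\E[\oo_B\mid\Fcal_{\rho_n-}]\to\oo_B$ against dominated convergence of $\oo_B\oo_{A_n^c}$; this dispenses with Lemma~\ref{L:cprob}, with Borel--Cantelli, and with the predictability reduction altogether (the only residue of that last point is that $\{W_{\rho_n}\le\varepsilon\}$ should be read as the event $\{W_{\rho_n}\le\varepsilon\}\cap\{\rho_n<\tau\}$, which your argument handles automatically since $B\subset C\subset\{\rho_n<\tau\}$). The price you pay is having to verify $\Fcal_{\tau-}\subset\bigvee_n\Fcal_{\rho_n-}$ explicitly, but the paper uses exactly the same fact implicitly when it writes $\P(C\mid\Fcal_{\rho_\infty-})=\oo_C$, so nothing is lost; both proofs ultimately rest on L\'evy's martingale convergence theorem. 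Two small points to tighten: since $\F$ is not completed, your inclusion holds only up to null sets (the announcing-sequence relations are themselves only almost sure), which suffices because all you need is that $B$ coincide almost surely with a set in $\Gcal_\infty$ so that $\E[\oo_B\mid\Gcal_\infty]=\oo_B$ a.s.; and the $\{\tau=0\}$ part of a set $A\in\Fcal_{\sigma_k}$ is handled by noting that $A\cap\{\tau=0\}$ is a.s.\ equal to $A\cap\{\sigma_k\le0\}\cap\{\tau=0\}\in\Fcal_0$.
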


\begin{proof}
By Theorem~IV.71 in~\cite{Dellacherie/Meyer:1978}, $\tau$ is almost surely equal to some predictable time $\tau'$. We may thus assume without loss of generality that $\tau$ is already predictable. Define events $F_n = \{W_{\rho_n} > \varepsilon \text{ and } \rho_n<\tau\}$ and $G_n = \{\P(C\mid\Fcal_{\rho_n-}) > 1/2\}$ for each $n \in \N$ and some fixed $\varepsilon > 0$.  By Lemma~\ref{L:cprob}, we have
\begin{equation}\label{eq:L:cprob2}
\P\left( \oo_{G_n} \P( F_n \mid \Fcal_{\rho_n-}) > \frac{1}{2} \right) \le 2\,\P(G_n\cap F_n) \le 2\,\P(F_n\cap C) + 2\,\P(G_n\cap C^c).
\end{equation}
Clearly, we have $\lim_{n\to \infty}\P(F_n\cap C)= 0$. Also, since $\rho_\infty=\lim_{n\to\infty}\rho_n\ge\tau$, we have $\lim_{n\to\infty}\P(C\mid\Fcal_{\rho_n-})=\P(C\mid\Fcal_{\rho_\infty-})=\oo_C$. Thus $\oo_{G_n}=\oo_C$ for all sufficiently large $n \in \N$, and hence $\lim_{n\to\infty}\P(G_n\cap C^c)=0$ by bounded convergence. The left-hand side of~\eqref{eq:L:cprob2} thus tends to zero as $n$ tends to infinity, so that, passing to a subsequence if necessary, the Borel-Cantelli lemma yields $\oo_{G_n} \P( F_n \mid \Fcal_{\rho_n-})\le1/2$ for all but finitely many $n \in \N$. Thus, since $\oo_{G_n}=\oo_C$ eventually, we have $\P( F_n \mid \Fcal_{\rho_n-})\le1/2$ for infinitely many $n \in \N$ on $C$. Since $\tau$ is predictable we have $\{\rho_n<\tau\}\in\Fcal_{\rho_n-}$ by Theorem~IV.73(b) in \cite{Dellacherie/Meyer:1978}. Thus $\P( F_n \mid \Fcal_{\rho_n-})=\P( W_{\rho_n}>\varepsilon \mid \Fcal_{\rho_n-})$ on $C$, which yields the desired conclusion.
\end{proof}

Returning to the setting of Theorem~\ref{T:convYX}, we now show that $\gamma$ vanishes asymptotically on the event~\eqref{T:convYX:4}.

\begin{lemma} \label{L:convYX:1}
Under Assumption~\ref{A:2}, we have $\lim_{t\to\tau}\gamma_t=0$ on~\eqref{T:convYX:4}.
\end{lemma}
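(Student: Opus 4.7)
The plan is to argue by contradiction using Lemma~\ref{L:cprob2}. Recall first that $\gamma\ge 0$: at a predictable jump time $t$ the measure $\nu^X(\{t\},\cdot)$ is a sub-probability measure on $\R$, and the local martingale property of $X$ together with $\Delta X>-1$ yields $\int x\,\nu^X(\{t\},dx)=0$; Jensen's inequality applied to the concave function $x\mapsto\log(1+x)$ then gives $\gamma_t=-\int\log(1+x)\,\nu^X(\{t\},dx)\ge 0$. It therefore suffices to show $\limsup_{t\to\tau}\gamma_t\le 0$ on the event in~\eqref{T:convYX:4}.

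First I would assume for contradiction that $\limsup_{t\to\tau}\gamma_t>2\varepsilon$ on some set $C\subset$~\eqref{T:convYX:4} of positive probability, for some $\varepsilon>0$. Since $\gamma$ is a nonnegative predictable process supported on the graphs of countably many predictable times (those at which $\nu^X$ has an atom in the time variable), the predictable section theorem yields a nondecreasing sequence $(\rho_n)_{n\in\N}$ of predictable stopping times with $\rho_n\uparrow\tau$, $\rho_n<\tau$ on $C$, and $\gamma_{\rho_n}>2\varepsilon$ on $C$; after shrinking $C$ if necessary, one may also arrange $C\in\Fcal_{\tau-}$, using predictability of $\gamma$ and $(\rho_n)$.

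Next I would apply Lemma~\ref{L:cprob2} to the measurable process $W_t=|\Delta Y_t|$ on $\lc 0,\tau\lc$. Since $Y$ converges on $C\subset$~\eqref{T:convYX:4}, the cadlag property gives $\lim_{t\to\tau}W_t=0$ on $C$. The lemma then produces, on $C$ and for infinitely many $n$, the bound $\P(|\Delta Y_{\rho_n}|\le\varepsilon\mid\Fcal_{\rho_n-})\ge 1/2$. Because $\gamma_{\rho_n}$ is $\Fcal_{\rho_n-}$-measurable with $\gamma_{\rho_n}>2\varepsilon$ on $C$, identity~\eqref{eq:DY} translates this into
$$\nu^X\bigl(\{\rho_n\},\,[-1,\,e^{-\varepsilon}-1]\bigr)\ge 1/2 \qquad\text{on $C$, infinitely often.}$$

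The main obstacle is converting this concentration of substantial negative compensator mass into an outright contradiction. My plan here is to combine the martingale balance $\int x\,\nu^X(\{\rho_n\},dx)=0$ with the vanishing of the jumps $\int_{x>\kappa}x\,\nu^X(\{\rho_n\},dx)$ on~\eqref{T:convYX:4} (a consequence of $x\oo_{x>\kappa}*\nu^X_{\tau-}<\infty$), forcing $\nu^X(\{\rho_n\},(0,\kappa])$ to be bounded below by a positive constant on $C$, infinitely often. On that region $\log(1+x)+\gamma_{\rho_n}>2\varepsilon$, so each such $n$ contributes a fixed positive amount to $(y^2\wedge|y|)*\nu^Y$; this eventually violates the finiteness of $(y^2\wedge|y|)*\nu^Y_{\tau-}$ obtained by applying the implication \ref{T:conv:a}~$\Longrightarrow$~\ref{T:conv:c} of Theorem~\ref{T:conv} to the local martingale $Y$ on $\{Y\text{ converges}\}$. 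The delicate part is justifying the integrability inputs behind this balancing step — both the identity $\int x\,\nu^X(\{\rho_n\},dx)=0$ and the extended local integrability of $(\Delta Y)^-\wedge Y^-$ required to invoke Theorem~\ref{T:conv} — under only the standing hypotheses available on $C$.
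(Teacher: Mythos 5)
Your setup---arguing by contradiction, covering $\{\gamma\ge2\varepsilon\}$ by predictable times $(\rho_n)_{n\in\N}$, invoking Lemma~\ref{L:cprob2} with $W=|\Delta Y|$, and using the balance $\int x\,\nu^X(\{\rho_n\},\dd x)=0$ together with \eqref{eq:DY} to force a definite amount of positive compensator mass at each $\rho_n$---is exactly the first half of the paper's argument (up to its display~\eqref{eq:convYX:1:1}). The gap is in the step you yourself flag as delicate. Your contradiction is taken against the finiteness of $(y^2\wedge|y|)*\nu^Y_{\tau-}$ on~\eqref{T:convYX:4}, which you propose to extract from the implication \ref{T:conv:a}~$\Longrightarrow$~\ref{T:conv:c} of Theorem~\ref{T:conv} applied to $Y$. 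That implication requires the extended local integrability of $(\Delta(\pm Y))^-\wedge(\pm Y)^-$, and the route the paper uses for this (applied to $-Y$) is the bound $(\Delta(-Y))^-=(\Delta Y)^+\le(\Delta X)^++\gamma$: the $(\Delta X)^+$ part is handled by Lemma~\ref{L:ELI}, but the $\gamma$ part is controlled in the paper precisely by Lemma~\ref{L:convYX:1}---see the proofs of Lemma~\ref{L:convYX} and Lemma~\ref{L:convYX:0 new}, both of which cite the lemma you are proving. As written, your final step is therefore circular, and no independent justification of the needed integrability is offered.

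The paper closes the argument without ever touching $\nu^Y$: it bounds $\sum_n\int x\oo_{x>0}\,\nu^X(\{\rho_n\},\dd x)$ above by $x\oo_{x\ge0\vee(e^{\varepsilon-\gamma}-1)}*\nu^X_{\tau-}$ (note $0\vee(e^{\varepsilon-\gamma_{\rho_n}}-1)=0$ on $C$ because $\gamma_{\rho_n}\ge2\varepsilon$), and proves this majorant finite on~\eqref{T:convYX:4} from two self-contained facts: since $\lim_{t\to\tau}\Delta Y_t=0$ there are only finitely many jumps with $x>e^{\varepsilon-\gamma}-1$, so Lemma~\ref{L:BJ} applied to the truncated measure $\oo_{0\vee(e^{\varepsilon-\gamma}-1)\le x\le\kappa}\,\mu^X$ gives $x\oo_{0\vee(e^{\varepsilon-\gamma}-1)\le x\le\kappa}*\nu^X_{\tau-}<\infty$; and $x\oo_{x>\kappa}*\nu^X_{\tau-}<\infty$ holds by definition of~\eqref{T:convYX:4}. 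This already contradicts your (correct) intermediate conclusion that $\int x\oo_{0<x\le\kappa}\,\nu^X(\{\rho_n\},\dd x)$ is bounded below by a positive constant for infinitely many $n$, so the subsequent detour through $\nu^X(\{\rho_n\},(0,\kappa])$ and $(y^2\wedge|y|)*\nu^Y$ is both unnecessary and the source of the circularity. Replace that last step by the Lemma~\ref{L:BJ} bound and the proof closes; otherwise you must supply an independent proof that $\gamma$ (equivalently $(\Delta Y)^+$) is extended locally integrable on~\eqref{T:convYX:4}, which is essentially the statement of the lemma itself.
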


\begin{proof}
As in the proof of Lemma~\ref{L:cprob} we may assume that $\tau$ is predictable. We now argue by contradiction. To this end, assume there exists $\varepsilon>0$ such that $\P(C)>0$ where $C=\{\gamma_t\ge 2 \varepsilon\text{ for infinitely many $t$}\} \cap \eqref{T:convYX:4}$. Let $(\rho_n)_{n\in\N}$ be a sequence of predictable times covering the predictable set $\{\gamma\ge2\varepsilon\}$. By \eqref{eq:DY} and since $X$ and $Y$ are c\`adl\`ag, any compact subset of $[0,\tau)$ can only contain finitely many time points~$t$ for which $\gamma_t\ge 2\varepsilon$. We may thus take the $\rho_n$ to satisfy $\rho_n<\rho_{n+1}<\tau$ on $C$ for all $n\in\N$, as well as $\lim_{n\to\infty}\rho_n\ge\tau$.

We now have, for each $n\in\N$ on $\{\rho_n<\tau\}$,
\begin{align*}
0
&= \int x\nu^X(\{\rho_n\}, \dd x) 
\le -(1-e^{-\varepsilon}) \P\left(\Delta X_{\rho_n} \le e^{-\varepsilon}-1\mid\Fcal_{\rho_n-}\right) + \int x\oo_{x>0}\, \nu^X(\{\rho_n\},\dd x) \\
&\le -(1-e^{-\varepsilon}) \P\left(\Delta Y_{\rho_n} \le \varepsilon\mid\Fcal_{\rho_n-}\right) + \int x\oo_{x>0}\, \nu^X(\{\rho_n\},\dd x),
\end{align*}
where the equality uses the local martingale property of $X$, the first inequality is an elementary bound involving Equation~II.1.26 in~\citet{JacodS}, and the second inequality follows from~\eqref{eq:DY}. 

Thus on $C$,
\begin{equation} \label{eq:convYX:1:1}
x \oo_{x \geq 0 \vee (e^{\varepsilon-\gamma}-1)}*\nu^X_{\tau-}
\ge \sum_{n\in\N}\int x\oo_{x>0}\, \nu^X(\{\rho_n\},\dd x) 
\ge (1-e^{-\varepsilon})\sum_{n\in\N}\P\left(\Delta Y_{\rho_n} \le \varepsilon\mid\Fcal_{\rho_n-}\right).
\end{equation}
With $W = \Delta Y$, Lemma~\ref{L:cprob2} implies that the right-hand side of~\eqref{eq:convYX:1:1} is infinite almost surely on $C$. 

We now argue that the left-hand is finite almost surely on \eqref{T:convYX:4} $\supset C$, yielding the contradiction. To this end, since $\lim_{t\to\tau}\Delta Y_t=0$ on~\eqref{T:convYX:4}, we have $\oo_{x > e^{\varepsilon - \gamma} - 1} * \mu^X_{\tau-} < \infty$ on~\eqref{T:convYX:4}. Lemma~\ref{L:BJ} and an appropriate localization argument applied to the random measure
\[
\mu=\oo_{0 \vee (e^{\varepsilon-\gamma}-1)\le x\le \kappa} \oo_{\lc 0, \tau\lc}\,\mu^X
\]
yield $x \oo_{0 \vee (e^{\varepsilon-\gamma}-1)\le x\le \kappa}*\nu^X_{\tau-}<\infty$; here $\kappa$ is as in Theorem~\ref{T:convYX}.  Since also $x\oo_{x>\kappa}*\nu^X_{\tau-}<\infty$ on~\eqref{T:convYX:4} by definition, the left-hand side of~\eqref{eq:convYX:1:1} is finite. 
\end{proof}

\begin{lemma} \label{L:convYX:0 new}
Fix $\varepsilon \in (0,1)$. Under Assumption~\ref{A:2}, we have
\[
[X^c, X^c]_{\tau-} + (\log(1+x)+\gamma)^2 \oo_{|x| \leq \varepsilon} * \nu^X_{\tau-}  - \log(1+x) \oo_{x \leq -\varepsilon} * \nu^X_{\tau-} + x  \oo_{x \geq  \varepsilon} * \nu^X_{\tau-}  < \infty
\]
 on~\eqref{T:convYX:4}.
 \end{lemma}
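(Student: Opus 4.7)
The plan is to work on the event $D$ defined by~\eqref{T:convYX:4}. By Lemma~\ref{L:convYX:1}, $\gamma_t\to 0$, and convergence of $Y$ on $D$ gives $\Delta Y_t\to 0$; since $\Delta Y_t=\log(1+\Delta X_t)+\gamma_t$, this forces $\Delta X_t\to 0$ on $D$, so only finitely many jumps of $X$ satisfy $|\Delta X|>\varepsilon$ there. Lemma~\ref{L:convYX} yields extended local integrability of $Y^{\pm}$ on $D$, and applying Theorem~\ref{T:conv}\ref{T:conv:a}$\Longrightarrow$\ref{T:conv:c}\&\ref{T:conv:f} to the local martingale $Y$ gives both $[X^c,X^c]_{\tau-}=[Y^c,Y^c]_{\tau-}<\infty$ (the first claimed term) and $[Y,Y]_{\tau-}<\infty$, whence $\sum_{t<\tau}(\Delta Y_t)^2<\infty$. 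Since $\gamma$ is predictable with $\sup_{t<\tau}|\gamma_t|<\infty$ on $D$, Lemma~\ref{L:ELI}\ref{L:ELI:4} allows a further localization so that $|\gamma|\le K$ for any constant $K>0$ to be chosen below.

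\textbf{Second and fourth terms.} Combining $\Delta Y=\log(1+\Delta X)+\gamma$ with $[Y,Y]_{\tau-}<\infty$ yields $(\log(1+x)+\gamma)^2\oo_{|x|\le\varepsilon}*\mu^X_{\tau-}<\infty$; the integrand is bounded after localization. A dual-predictable-projection argument as in the proof of Lemma~\ref{L:BJ}---localize at $\rho_n=\inf\{t:F*\mu^X_t\ge n\}$ to make $F*\mu^X_{\rho_n}$ bounded, then apply $\E[F*\mu^X_{\rho_n}]=\E[F*\nu^X_{\rho_n}]$, which holds for any nonnegative predictable $F$---transfers this to the $\nu^X$-integral, proving the second term. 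For the fourth term, split $x\oo_{x\ge\varepsilon}=x\oo_{\varepsilon\le x\le M}+x\oo_{x>M}$ with $M:=\max(\kappa,1)$: the tail is dominated by $x\oo_{x>\kappa}*\nu^X_{\tau-}<\infty$ from the definition of $D$, and the bounded piece is handled by the same projection since $\oo_{\varepsilon\le x\le M}*\mu^X_{\tau-}<\infty$ (only finitely many jumps).

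\textbf{Third term---the main obstacle.} Here $-\log(1+x)$ is unbounded as $x\downarrow-1$, so a direct projection fails. The idea is to apply Theorem~\ref{T:conv}\ref{T:conv:a}$\Longrightarrow$\ref{T:conv:c} to the purely discontinuous local martingale $L:=Y-X^c=\log(1+x)*(\mu^X-\nu^X)$, which converges on $D$ since both $Y$ and $X^c$ do; this yields $|y|\oo_{y\le-1}*\nu^L_{\tau-}<\infty$. Since $\Delta L_t=\log(1+\Delta X_t)+\gamma_t$, a direct computation relating $\nu^L$ to $\nu^X$ shows that $|y|\oo_{y\le-1}*\nu^L_{\tau-}$ dominates the $\nu^X$-integral of $(-\log(1+x)-\gamma_t)\oo_{\log(1+x)\le-1-\gamma_t}$; here one uses that $\gamma_t\ge 0$, so the ``no-jump'' atom of $\mu^L$ at $y=\gamma_t$ (arising at predictable atoms of $\nu^X$ when $\Delta X_t=0$) never enters the region $\{y\le-1\}$. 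Now choose $K$ small enough that $-\log(1-\varepsilon)\ge 2K$, so that $|\log(1+x)|\ge 2K\ge 2\gamma_t$ for all $x\le-\varepsilon$; the elementary bound $|\log(1+x)+\gamma_t|\ge|\log(1+x)|/2$ then gives
\[
(-\log(1+x))\oo_{x\le e^{-1-K}-1}*\nu^X_{\tau-}\ \le\ 2\,|y|\oo_{y\le-1}*\nu^L_{\tau-}\ <\ \infty.
\]
On the residual bounded range $x\in(e^{-1-K}-1,-\varepsilon]\subset(-1,0)$, the function $-\log(1+x)$ is bounded and the $\mu^X$-integral is finite, so one last compensator projection yields finiteness there. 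Combining the two ranges gives the third term. The main obstacle is precisely this last part, which requires carefully relating $\nu^L$ to $\nu^X$ through the $\gamma$-shift at predictable atoms of $\nu^X$.
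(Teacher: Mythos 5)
Your overall strategy is sound and close in spirit to the paper's: both arguments rest on applying Theorem~\ref{T:conv} to $Y$ (the paper uses $-Y$), on Lemma~\ref{L:convYX:1} to control $\gamma$, and on the identity $\Delta Y=\log(1+\Delta X)+\gamma$ together with the observation that $\gamma\ge0$ keeps the ``no-jump'' atoms of $\mu^L$ out of the region $\{y\le-1\}$. Your detour through $[Y,Y]_{\tau-}<\infty$ and $\mu^X$--integrals followed by compensator projections, and through $\nu^L$ for the unbounded part of the third term, is more roundabout than the paper's single identity $(y^2\wedge|y|)*\nu^Y_{\tau-}=((\log(1+x)+\gamma)^2\wedge|\log(1+x)+\gamma|)*\nu^X_{\tau-}+\sum_{s<\tau}(\gamma_s^2\wedge\gamma_s)\oo_{\{\Delta X_s=0\}}$, from which all the bounds are read off, but it is legitimate.

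There is, however, one genuine gap: the claim that ``Lemma~\ref{L:ELI}\ref{L:ELI:4} allows a further localization so that $|\gamma|\le K$ for any constant $K>0$ to be chosen below,'' followed by the choice of $K$ with $-\log(1-\varepsilon)\ge 2K$. Extended local boundedness produces a bound by \emph{some} deterministic constant $\Theta_n$ on each $\lc0,\tau_n\rc$; it does not let you prescribe that constant. If $\nu^X$ has a predictable atom at which $\gamma_t>-\log(1-\varepsilon)/2$ (perfectly possible), no stopping time covering $D$ up to $\tau$ can remove it, and at such times both ingredients of your third-term estimate fail: $x\le e^{-1-K}-1$ no longer implies $\log(1+x)+\gamma_t\le-1$, and $-\log(1+x)-\gamma_t\ge-\log(1+x)/2$ no longer holds. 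The repair is either to tie the splitting point to the actual bound $\Theta$ on $\gamma$ (split at $x\le e^{-1-2\Theta}-1$, so that $\log(1+x)+\gamma\le -1-\Theta$ and $-\log(1+x)\ge 2\gamma$ there), or, as the paper does, to observe that on \eqref{T:convYX:4} there are only finitely many times with $\gamma_t\ge-\log(1-\varepsilon)/2$ (by Lemma~\ref{L:convYX:1} and the c\`adl\`ag property), each contributing a finite amount because $(x-\log(1+x))*\nu^X$ is finite-valued and $\int|x|\,\nu^X(\{t\},\dd x)<\infty$ by the local martingale property of $X$. A smaller omission: when invoking Theorem~\ref{T:conv}\ref{T:conv:a} for $Y$ and for $L$ you must verify that $(\Delta Y)^-\wedge Y^-$, respectively $(\Delta L)^-\wedge L^-$, is extended locally integrable on $D$; this does follow from Lemma~\ref{L:convYX} together with the convergence of $X^c$ and Lemma~\ref{L:ELI}, but it is a hypothesis of the theorem, not a freebie.
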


\begin{proof}
As in Lemma~\ref{L:convYX} we argue that the condition \ref{T:conv:a} in Theorem~\ref{T:conv} holds with $X$ replaced by $-Y$. Using the equivalence with Theorem~\ref{T:conv}\ref{T:conv:c}, we obtain that
 $[X^c, X^c]_{\tau-}  = [Y^c, Y^c]_{\tau-} < \infty$ and 
\begin{align}\label{eq: L:convYX:0 new proof}
\Big( (\log(1+x)+\gamma)^2 \wedge |\log(1+x)+\gamma|\Big) * \nu^X_{\tau-}  + \sum_{s<\tau} (\gamma_s^2\wedge\gamma_s)\1{\Delta X_s=0}
= (y^2\wedge|y|)*\nu^Y_{\tau-}  < \infty
\end{align}
on \eqref{T:convYX:4}, where the equality in~\eqref{eq: L:convYX:0 new proof} follows from \eqref{eq:DY}.  Now, by localization, Lemma~\ref{L:convYX:1}, and Lemma~\ref{L:ELI}\ref{L:ELI:4}, we may assume that $\sup_{t < \tau} \gamma_t$ is bounded.  We then obtain from~\eqref{eq: L:convYX:0 new proof} that $(\log(1+x)+\gamma)^2 \oo_{|x| \leq \varepsilon} * \nu^X_{\tau-} < \infty$ on \eqref{T:convYX:4}. 

Next, note that
\begin{align*}  
  -\log(1+x)\oo_{x\le -\varepsilon} * \nu^X_{\tau-} &=   -\log(1+x)\oo_{x\le-\varepsilon}  \oo_{\{\gamma<-\log(1-\varepsilon)/2\}}  * \nu^X_{\tau-} \\
  	&\qquad +  \sum_{t < \tau} \int -\log(1+x)\oo_{x\le-\varepsilon}  \oo_{\{\gamma\geq -\log(1-\varepsilon)/2\}}\,  \nu^X(\{t\},\dd x) < \infty
\end{align*}
on \eqref{T:convYX:4}. Indeed, an argument based on \eqref{eq: L:convYX:0 new proof} shows that the first summand is finite. The second summand is also finite since it consists of finitely many terms due to Lemma~\ref{L:convYX:1}, each of which is finite. The latter follows since $(x-\log(1+x))*\nu^X$ is a finite-valued process by assumption and $\int|x|\nu^X(\{t\},dx)<\infty$ for all $t<\tau$ due to the local martingale property of $X$.
Finally, a calculation based on \eqref{eq: L:convYX:0 new proof} yields $x\oo_{\varepsilon\le x\le \kappa}*\nu^X_{\tau-}<\infty$ on the event \eqref{T:convYX:4}, where $\kappa$ is as in Theorem~\ref{T:convYX}. This, together with the definition of \eqref{T:convYX:4}, implies that $x  \oo_{x \geq  \varepsilon} * \nu^X_{\tau-}  < \infty$ there, completing the proof.
\end{proof}

We are now ready to verify the claimed inclusions of Theorem~\ref{T:convYX}.

\eqref{T:convYX:1} $\subset$ \eqref{T:convYX:2}: The implication \ref{T:conv:a} $\Longrightarrow$ \ref{T:conv:g} of Theorem~\ref{T:conv} shows that $\Ecal(X)_{\tau-}>0$ on~\eqref{T:convYX:1}. The desired inclusion now follows from~\eqref{eq:VV}.

\eqref{T:convYX:2} $\subset$ \eqref{T:convYX:1}: By the inclusion \eqref{T:conv2:2} $\subset$ \eqref{T:conv2:1} of Corollary~\ref{C:conv2} and the implication \ref{T:conv:a} $\Longrightarrow$ \ref{T:conv:g} of Theorem~\ref{T:conv}, $X$ converges and $\Ecal(X)_{\tau-}>0$ on \eqref{T:convYX:2}. Hence by \eqref{eq:VV}, $Y$ also converges on \eqref{T:convYX:2}.

\eqref{T:convYX:1}  $\cap$ \eqref{T:convYX:2} $\subset$ \eqref{T:convYX:3}: Obvious.

\eqref{T:convYX:3} $\subset$ \eqref{T:convYX:2}: The inclusion \eqref{T:conv2:1} $\subset$ \eqref{T:conv2:2} of Corollary~\ref{C:conv2} implies $[X^c,X^c]_{\tau-}+(x^2\wedge|x|)*\nu^X_{\tau-}<\infty$ on~\eqref{T:convYX:3}. Since also $-\log(1+x)\oo_{x\le -\eta}*\nu^X_{\tau-} < \infty$ on~\eqref{T:convYX:3} by definition, the desired inclusion follows.

\eqref{T:convYX:1} $\cap$ \eqref{T:convYX:2} $\subset$ \eqref{T:convYX:4}: Obvious.

\eqref{T:convYX:4} $\subset$ \eqref{T:convYX:1}: We need to show that $X$ converges on \eqref{T:convYX:4}. By Theorem~\ref{T:conv} it is sufficient to argue that $[X^c,X^c]_{\tau-}+(x^2\wedge|x|)*\nu^X_{\tau-}<\infty$ on \eqref{T:convYX:4}.  Lemma~\ref{L:convYX:0 new} yields directly that $[X^c,X^c]_{\tau-}< \infty$, so we focus on the jump component. To this end, using that $\int x\nu^X(\{t\},dx)=0$ for all $t<\tau$, we first observe that, for fixed $\varepsilon\in (0,1)$,
\begin{align*}
\gamma_t
&= \int (x-\log(1+x)) \,\nu^X(\{t\},\dd x) \\
&\le \frac{1}{1-\varepsilon} \int x^2\oo_{|x|\le \varepsilon}\,\nu^X(\{t\},\dd x) + \int x\oo_{x>\varepsilon}\,\nu^X(\{t\},\dd x)  +  \int -\log(1+x)\oo_{x<-\varepsilon}\,\nu^X(\{t\},\dd x)
\end{align*}
for all $t<\tau$.
Letting $\Theta_t$ denote the last two terms for each $t < \tau$, Lemma~\ref{L:convYX:0 new} implies that $\sum_{t<\tau}\Theta_t<\infty$, and hence also $\sum_{t<\tau}\Theta_t^2<\infty$, hold on~\eqref{T:convYX:4}. Furthermore, the inequality $(a+b)^2\le 2a^2 + 2b^2$ yields that
\begin{align} \label{eq: sum gamma2}
\sum_{t<\tau_n}  \gamma_t^2
&\le  \frac{2}{(1-\varepsilon)^2} \sum_{t<\tau_n}  \left( \int x^2\oo_{ |x| \le \varepsilon}\,\nu^X(\{t\},dx) \right)^2 + 2 \sum_{t<\tau_n}  \Theta_t^2 
 \le  \frac{2 \varepsilon^2}{(1-\varepsilon)^2}  x^2\oo_{|x|\le \varepsilon} * \nu^X_{\tau_n} + 2 \sum_{t<\tau}  \Theta_t^2
\end{align} 
for all $n \in \N$, where $(\tau_n)_{n\in\N}$ denotes an announcing sequence for~$\tau$.

Also observe that, for all $n \in \N$,
\begin{align*}
\frac{1}{16}  x^2 \oo_{|x|\le \varepsilon} * \nu^X_{\tau_n} 
&\leq (\log(1+x))^2 \oo_{|x|\leq \varepsilon} * \nu^X_{\tau_n} 
\leq 2 (\log(1+x) + \gamma)^2 \oo_{|x| \leq \varepsilon} * \nu^X_{\tau_n} + 2 \sum_{t \leq \tau_n} \gamma_t^2,
\end{align*}
which yields, thanks to \eqref{eq: sum gamma2},
\begin{align*}
	\left(\frac{1}{16} - \frac{4 \varepsilon^2}{(1-\varepsilon)^2} \right) x^2 \oo_{|x|\le \varepsilon} * \nu^X_{\tau_n} \leq 2 (\log(1+x) + \gamma)^2 \oo_{|x| \le \varepsilon} * \nu^X_{\tau_n} + 4 \sum_{t<\tau}  \Theta_t^2.
\end{align*}
Choosing $\varepsilon$ small enough and letting $n$ tend to infinity, we obtain that $x^2 \oo_{|x|\le \varepsilon} * \nu^X_{\tau} < \infty$ on \eqref{T:convYX:4} thanks to Lemma~\ref{L:convYX:0 new}. The same lemma also yields  $|x| \oo_{|x|\geq \varepsilon} * \nu^X_{\tau} < \infty$, which concludes the proof.
\qed

\section{Novikov-Kazamaki conditions} \label{S:NK}

We now apply our convergence results to prove general Novikov-Kazamaki type conditions. Throughout this section we fix a nonnegative local martingale $Z$ with $Z_0=1$, and define $\tau_0=\inf\{t \geq 0:Z_t=0\}$. We assume that $Z$ does not jump to zero, meaning that $Z_{\tau_0-}=0$ on $\{\tau_0<\infty\}$. The stochastic logarithm $M=\Lcal(Z)$ is then a local martingale on $\lc0,\tau_0\lc$ with $\Delta M>-1$; see Appendix~\ref{A:SE}. We let $\tau_\infty=\lim_{n\to \infty}\inf\{t \geq 0:Z_t\ge n\}$ denote the explosion time of $Z$; clearly, $\P(\tau_\infty<\infty)=0$. To distinguish between different probability measures we now write $\E_R[\,\cdot\,]$ for the expectation operator under a probability measure $R$.

\subsection{General methodology} \label{S:method}

The idea of our approach is to use $Z$ as the density process of a measure change, without knowing {\em a priori} whether~$Z$ is a uniformly integrable martingale. This can be done whenever the filtration $\F$ is the right-continuous modification of a standard system, for instance if $\F$ is the right-continuous canonical filtration on the space of (possibly explosive) paths; see \citet{F1972} and \citet{Perkowski_Ruf_2014}.  This assumption rules out that $\F$ is augmented with the $\P$--nullsets, which is one reason for avoiding the ``usual conditions'' in the preceding theory. We assume $\F$ has this property, and emphasize that no generality is lost; see Appendix~\ref{app:embed}. The resulting measure, denoted $\Q$ throughout this section, is sometimes called the {\em F\"ollmer measure}; Appendix~\ref{S:follmer} reviews some relevant facts. Its crucial property is that $Z$ explodes with positive $\Q$--probability if and only if $Z$ is not a uniformly integrable martingale under~$\P$. This is where our convergence results enter the picture: under Novikov-Kazamaki type conditions, they are used to exclude explosions under~$\Q$. 

The following ``code book'' contains the basic definitions and facts that are used to translate previous sections into the current setting. It will be used extensively throughout this section.
\begin{enumerate}
\item Consider the process
\begin{equation} \label{eq:N}
N = -M+\langle M^c,M^c\rangle+\frac{x^2}{1+x} * \mu^M
\end{equation}
on $\lc0,\tau_0\lc$. Theorem~\ref{T:conv} in conjunction with Theorems~\ref{T:SE}, \ref{T:reciprocal}, and \ref{T numeraire} readily imply:
\begin{itemize}
\item $1/Z=\Ecal(N)$ and $N$ is a local martingale on $\lc0,\tau_\infty\lc$ under~$\Q$.
\item $Z$ is a uniformly integrable martingale under~$\P$ if and only if $\Q(\lim_{t\to\tau_\infty}N_t\text{ exists in }\R)=1$.
\end{itemize}
\item Consider the process
\begin{equation} \label{eq:L}
L = -M+\langle M^c,M^c\rangle+(x-\log(1+x)) * \mu^M + ((1+x)\log(1+x)-x)*\nu^M
\end{equation}
on $\lc0,\tau_0\lc$. This is always well-defined, but the last term may be infinite-valued. By Theorem~\ref{T:reciprocal} the last term equals $(y-\log(1+y))*\widehat\nu^N$, where $\widehat\nu^N=\nu^N/(1+y)$. If it is finite-valued, we have:
\begin{itemize}
\item $\widehat\nu^N$ is the predictable compensator of~$\mu^N$ under~$\Q$; see Lemma~\ref{L:predc}.
\item $L=N^c+\log(1+y)*(\mu^N-\widehat\nu^N)$. Note that $\log(1+y)$ lies in $G_{\rm loc}(\mu^N)$ under~$\Q$ since both $y$ and $y-\log(1+y)$ do.
\item We are in the setting of Assumption~\ref{A:2} under~$\Q$, with $\tau=\tau_\infty$, $X=N$ (hence $\nu^X=\widehat\nu^N$), and $Y=L$.
\end{itemize}
\end{enumerate}
With this ``code book'' at our disposal we may now give a quick proof of the following classical result due to \citet{Lepingle_Memin_Sur}. The proof serves as an illustration of the general technique and as a template for proving the more sophisticated results presented later on.

\begin{theorem}[The \citet{Lepingle_Memin_Sur} conditions] \label{T:LepMem}
On $\lc0,\tau_0\lc$, define the nondecreasing processes
\begin{align}
A &= \frac{1}{2}\langle M^c,M^c\rangle + \Big(\log(1+x) - \frac{x}{1+x}\Big)*\mu^M; \label{eq:LM_A} \\
B &= \frac{1}{2}\langle M^c,M^c\rangle + ((1+x)\log(1+x) - x)*\nu^M.  \label{eq:LM_B} 
\end{align}
If either $\E_\P[e^{A_{\tau_0-}}]<\infty$ or $\E_\P[e^{B_{\tau_0-}}]<\infty$, then $Z$ is a uniformly integrable martingale.
\end{theorem}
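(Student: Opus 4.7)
The approach follows the ``code book'' of Section~\ref{S:method}: work under the F\"ollmer measure $\Q$ and show that the process $N$ from~\eqref{eq:N} converges in $\R$ as $t\to\tau_\infty$ under $\Q$. By the code book this is equivalent to $Z$ being a uniformly integrable martingale under $\P$, which is exactly the conclusion sought.

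The crux is a pair of identities, valid on the stochastic interval $\lc 0,\tau_0\lc$:
\begin{align*}
Z\,e^{N}=e^{A},\qquad Z\,e^{L}=e^{B}.
\end{align*}
The second identity follows immediately from the relation $1/Z=\Ecal(N)=e^{L-V}$ in~\eqref{eq:VV}, after observing that the exponential compensator $V$ coincides with $B$ (using $\nu^{X}=\widehat\nu^{N}$ and the jump correspondence $y=-x/(1+x)$, hence $1+y=1/(1+x)$, to rewrite $(y-\log(1+y))*\widehat\nu^{N}$ as $((1+x)\log(1+x)-x)*\nu^{M}$). For the first identity, the same jump correspondence transforms $A$ into $A=\tfrac{1}{2}[N^{c},N^{c}]+(y-\log(1+y))*\mu^{N}$, and then It\^o's formula applied to $\log\Ecal(N)$ yields $\log\Ecal(N)=N-A$, i.e.\ $\Ecal(N)=e^{N-A}$, whence $Z\,e^{N}=e^{A}$. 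Combined with the defining relation $\E_\Q[F\oo_{\sigma<\tau_\infty}]=\E_\P[Z_\sigma F]$ of the F\"ollmer measure (cf.\ Appendix~\ref{S:follmer}), valid for $\sigma\in\Tcal$ and nonnegative $\Fcal_\sigma$--measurable~$F$, and the convention $Z_\sigma=0$ on $\{\sigma\ge\tau_0\}$, the two identities immediately yield the uniform bounds
\begin{align*}
\E_\Q\!\left[e^{N_\sigma}\oo_{\sigma<\tau_\infty}\right]=\E_\P\!\left[e^{A_\sigma}\oo_{\sigma<\tau_0}\right]\le\E_\P\!\left[e^{A_{\tau_0-}}\right],\qquad \E_\Q\!\left[e^{L_\sigma}\oo_{\sigma<\tau_\infty}\right]\le\E_\P\!\left[e^{B_{\tau_0-}}\right],
\end{align*}
for every $\sigma\in\Tcal$, using monotonicity of $A$ and $B$ in the final estimates.

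The conclusion is then a direct invocation of the convergence corollaries under $\Q$. If $\E_\P[e^{A_{\tau_0-}}]<\infty$, then since $N$ is a $\Q$--local martingale on $\lc 0,\tau_\infty\lc$, Corollary~\ref{C:conv001} applied with $X=N$, $f(x)=e^{x}$, and $U\equiv 0$ shows that condition~\ref{T:conv1:c} is satisfied, so~\ref{T:conv1:a} yields convergence of $N$ in $\R$ under $\Q$. If $\E_\P[e^{B_{\tau_0-}}]<\infty$, we first observe that Assumption~\ref{A:2} is in force under $\Q$, because $B_{\tau_0-}<\infty$ $\P$--almost surely forces $((1+x)\log(1+x)-x)*\nu^{M}$ to be finite-valued (transferring to $\Q$ on $\lc 0,\tau_\infty\lc$ by absolute continuity on $\Fcal_\sigma$ for $\sigma<\tau_\infty$); Corollary~\ref{C:convYX2}\ref{T:convYX2:c} with $c=1$ and $U\equiv 0$ then produces~\ref{T:convYX2:a}, and in particular convergence of $N$ under $\Q$. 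The main obstacle is the discovery and verification of the bridging identities $Z\,e^{N}=e^{A}$ and $Z\,e^{L}=e^{B}$; once they are available, transferring the $\P$--integrability hypotheses to $\Q$--uniform boundedness and invoking the general convergence machinery of Section~\ref{S:convergence} is essentially bookkeeping.
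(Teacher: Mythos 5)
Your proposal is correct and follows essentially the same route as the paper: the identities $A=\log Z+N$ and $B=\log Z+L$ (the $a=0$ case of Lemma~\ref{L:ABdecomp}), the transfer of the $\P$--integrability hypotheses to uniform $L^1$--bounds on $e^{N}$ and $e^{L}$ under the F\"ollmer measure $\Q$, and the invocation of Corollary~\ref{C:conv001} (with $f(x)=e^x$, $U=0$) and Corollary~\ref{C:convYX2}\ref{T:convYX2:c} to obtain convergence of $N$ under $\Q$. The verification of the bridging identities via Theorem~\ref{T:reciprocal} and the involution $y=-x/(1+x)$ matches the paper's computation.
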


\begin{proof}
We start with the criterion using $A$. A brief calculation gives the identity
\[
A = \left( M - \frac{1}{2}\langle M^c,M^c\rangle - (x-\log(1+x))*\mu^M \right) + \left( -M + \langle M^c,M^c\rangle + \frac{x^2}{1+x}*\mu^M \right)
=\log Z + N.
\]
Thus, using also that $A$ is nondecreasing, we obtain
\[
\infty > \sup_{\sigma\in\Tcal} \E_\P\left[e^{A_\sigma}\1{\sigma<\tau_0}\right]
= \sup_{\sigma\in\Tcal} \E_\Q\left[e^{N_\sigma}\1{\sigma<\tau_\infty}\right].
\]
The implication \ref{T:conv1:c} $\Longrightarrow$ \ref{T:conv1:a} in Corollary~\ref{C:conv001} now shows that $\lim_{t\to\tau_\infty}N_t$ exists in~$\R$, $\Q$--almost surely. Thus $Z$ is a uniformly integrable martingale under~$\P$.

The criterion using $B$ is proved similarly. First, the assumption implies that~$B$ and hence~$L$ are finite-valued. Theorem~\ref{T:reciprocal} and a calculation yield
\[
B = \frac{1}{2}\langle N^c,N^c\rangle + ((1+\phi(y))\log(1+\phi(y)) - \phi(y))*\nu^N
=\frac{1}{2}\langle N^c,N^c\rangle + (y-\log(1+y))*\widehat\nu^N,
\]
where $\phi$ is the involution in~\eqref{eq:phi}. Observing that $L = N - (y - \log(1+y))* (\mu^N-\widehat\nu^N)$, we obtain $-\log Z = \log \Ecal(N) = L - B$, and hence
\[
\infty > \sup_{\sigma\in\Tcal} \E_\P\left[e^{B_\sigma}\1{\sigma<\tau_0}\right]
= \sup_{\sigma\in\Tcal} \E_\Q\left[e^{L_\sigma}\1{\sigma<\tau_\infty}\right].
\]
The implication \ref{T:convYX2:c} $\Longrightarrow$ \ref{T:convYX2:a} in Corollary~\ref{C:convYX2} now shows that $N$ converges $\Q$--almost surely.
\end{proof}

\begin{remark}We make the following observations concerning Theorem~\ref{T:LepMem}:
\begin{itemize}  
\item Note that we have the following representation by Theorem~\ref{T:reciprocal}, with $\widehat{\nu}^N = \nu^N/(1+y)$.
\begin{align*}
A &= \frac{1}{2}\langle N^c,N^c\rangle + (y- \log(1+y))*\mu^N;\\
B &= \frac{1}{2}\langle N^c,N^c\rangle + (y-\log(1+y) )*\widehat\nu^N.
\end{align*}
Thus, by Lemma~\ref{L:predc}, $B$ is the predictable compensator (if it exists) of $A$ under the F\"ollmer measure $\Q$; see also Remarque~III.12 in \citet{Lepingle_Memin_Sur}.
\item Any of the two conditions in Theorem~\ref{T:LepMem} implies that   $Z_\infty>0$ and thus $\tau_0 = \infty$, thanks to Theorem~\ref{T:conv}.  This has already been observed in Lemmes~III.2 and III.8
 in \citet{Lepingle_Memin_Sur}.
\item For the condition involving $B$, \citet{Lepingle_Memin_Sur} allow $Z$ to jump to zero. This case can be treated using our approach as well, albeit with more intricate arguments. For reasons of space, we focus on the case where $Z$ does not jump to zero. \qed
\end{itemize} 
\end{remark}

\citet{Protter_Shimbo} and \citet{Sokol2013_optimal} observe that if $\Delta M \geq -1+\delta$ for some $\delta \geq 0$ then the expressions $\log(1+x) - x/(1+x)$ and $(1+x) \log(1+x) - x$ appearing in \eqref{eq:LM_A} and \eqref{eq:LM_B}, respectively, can be bounded by simplified (and more restrictive) expressions.

\subsection{An abstract characterization and its consequences}

In a related paper, \citet{Lepingle_Memin_integrabilite} embed the processes $A$ and $B$ from Theorem~\ref{T:LepMem} into a parameterized family of processes $A^a$ and $B^a$, which can be defined on $\lc0,\tau_0\lc$ for each $a\in\R$ by
\begin{align}\label{eq:A}
A^{a} &= aM + \left( \frac{1}{2}-a\right) [M^c,M^c] + \left( \log(1+x) - \frac{ax^2+x}{1+x}\right) * \mu^M; \\
\nonumber
B^{a}  &= aM + \left( \frac{1}{2}-a\right) [M^c,M^c] - a\left( x-\log(1+x) \right) * \mu^M   +(1-a)\left((1+x)\log(1+x)  - x\right) * \nu^M.  \nonumber
\end{align}
Note that $A^0=A$ and $B^0=B$. They then prove that uniform integrability of $(e^{A^a_\sigma})_{\sigma\in\Tcal}$ or $(e^{B^a_\sigma})_{\sigma\in\Tcal}$ for some $a\in[0,1)$ implies that $Z$ is a uniformly integrable martingale. Our present approach sheds new light on this result and enables us to strengthen it. A key observation is that $A^a$ and $B^a$ satisfy the following identities, which extend those for $A$ and $B$ appearing in the proof of Theorem~\ref{T:LepMem}. Recall that $N$ and $L$ are given by~\eqref{eq:N} and~\eqref{eq:L}.

\begin{lemma} \label{L:ABdecomp}
The processes $A^a$ and $B^a$ satisfy, $\Q$--almost surely on $\lc0,\tau_\infty\lc$,
\begin{align*}
A^a &= \log Z + (1-a)N;\\
B^{a} &= \log Z + (1-a)L.
\end{align*}
\end{lemma}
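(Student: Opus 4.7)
The plan is to reduce both identities to direct algebraic manipulations, starting from the standard formula for the logarithm of a stochastic exponential. Since $Z = \Ecal(M)$ and $Z > 0$ on $\lc 0, \tau_\infty \lc$ under $\Q$ (by definition of $\tau_\infty$ as the explosion time), the Doléans-Dade formula gives, $\Q$-almost surely on $\lc 0, \tau_\infty \lc$,
\[
\log Z \;=\; M - \tfrac{1}{2} \langle M^c, M^c \rangle - (x - \log(1+x)) * \mu^M,
\]
where we use $[M^c, M^c] = \langle M^c, M^c \rangle$ for the continuous local martingale part. This is the key building block; all the rest is bookkeeping.

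For the first identity, I would substitute this expression for $\log Z$ into $\log Z + (1-a) N$, using \eqref{eq:N} to expand $N = -M + \langle M^c, M^c\rangle + \frac{x^2}{1+x} * \mu^M$. Collecting the $M$-coefficients gives $a M$; collecting the $\langle M^c, M^c\rangle$-coefficients gives $(\tfrac{1}{2} - a)$; and collecting the $\mu^M$-integrand gives
\[
\log(1+x) - x + (1-a) \frac{x^2}{1+x} \;=\; \log(1+x) - \frac{x(1+x) - (1-a) x^2}{1+x} \;=\; \log(1+x) - \frac{a x^2 + x}{1+x},
\]
which is exactly the integrand in the definition of $A^a$. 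This matches \eqref{eq:A}.

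For the second identity, I would proceed identically, substituting the formula for $\log Z$ and using \eqref{eq:L} for $L$. The $M$-coefficient is again $a$; the $\langle M^c, M^c\rangle$-coefficient is $(\tfrac{1}{2} - a)$; the $\mu^M$-integrand collapses to $-a(x - \log(1+x))$; and a new term $(1-a)((1+x)\log(1+x) - x) * \nu^M$ appears directly from $L$. This is precisely the definition of $B^a$.

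The only potential obstacle is ensuring that the integrals with respect to $\mu^M$ and $\nu^M$ are all well-defined on $\lc 0, \tau_\infty \lc$ under $\Q$, but this is already guaranteed by the setup in the ``code book'' of Subsection~\ref{S:method}: $N$ is defined on $\lc 0, \tau_0 \lc$ under $\P$ and extends to a local $\Q$-martingale on $\lc 0, \tau_\infty \lc$, and $L$ is well-defined on $\lc 0, \tau_\infty\lc$ precisely because $((1+x)\log(1+x) - x) * \nu^M$ is there finite-valued (the case $B^a = \infty$ being vacuous). No analytical difficulty remains beyond the two arithmetic identities above.
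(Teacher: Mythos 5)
Your proof is correct and is essentially the computation the paper omits: the paper's own template (visible in the proof of Theorem~\ref{T:LepMem} for the case $a=0$) is exactly your expansion of $\log Z=\log\Ecal(M)$ via the Dol\'eans--Dade formula followed by collecting coefficients. Since you work directly with the definition \eqref{eq:L} of $L$ in terms of $\mu^M$ and $\nu^M$, you do not even need the appeal to Theorem~\ref{T:reciprocal} that the paper mentions, and your handling of the possibly infinite-valued term in $B^a$ is consistent with the paper's conventions.
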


\begin{proof}
The identities follow from Theorem~\ref{T:reciprocal} and basic computations that we omit here.
\end{proof}

We now state a general result giving necessary and sufficient conditions for $Z$ to be a uniformly integrable martingale. It is a direct consequence of the convergence results in Section~\ref{S:convergence}. In combination with Lemma~\ref{L:ABdecomp} this will yield improvements of the result by \citet{Lepingle_Memin_integrabilite} and give insight into how far such results can be generalized.

\begin{theorem}[Abstract \citet{Lepingle_Memin_integrabilite} conditions]   \label{T:NK}
Let $f:\R\to\R_+$ be any nondecreasing function with $f(x)\ge x$ for all sufficiently large~$x$, and let $\epsilon \in \{-1,1\}$, $\kappa > 0$, and $\eta \in (0,1)$.
Then the following  conditions are equivalent:
\begin{enumerate}[label={\rm(\alph*)},ref={\rm(\alph*)}]
	\item\label{T:NK:1} $Z$ is a uniformly integrable martingale.
	\item\label{T:NK:2} There exists an optional process $U$, extended locally integrable on $\lc0,\tau_\infty\lc$ under $\Q$, such that 
\begin{equation}  \label{eq:4.7}
\sup_{\sigma\in\Tcal} \E_\P\left[ Z_\sigma f(\epsilon N_\sigma-U_\sigma)\1{\sigma < \tau_0}\right] < \infty.
\end{equation}
\end{enumerate}
Moreover, the following conditions are equivalent:
\begin{enumerate}[label={\rm(\alph*)},ref={\rm(\alph*)},resume]
\item\label{T:NK:3}  $Z$ is a uniformly integrable martingale and  $(1+x)\log(1+x) \oo_{x>\kappa} * \nu^M$ is extended locally integrable on $\lc0,\tau_\infty\lc$ under $\Q$.
\item\label{T:NK:4} $(1+x)\log(1+x) \oo_{x > \kappa} * \nu^M$ is finite-valued,  $-x \oo_{x<-\eta} * \nu^M$ is extended locally integrable on $\lc0,\tau_\infty\lc$ under $\Q$, and
\begin{align*} 
	\sup_{\sigma \in \mathcal{T}} \E_\P\left[Z_\sigma f(\epsilon L_\sigma-U_\sigma)\right] < \infty
\end{align*}
for some optional process $U$, extended locally integrable on $\lc0,\tau_\infty\lc$ under~$\Q$.
\end{enumerate}
\end{theorem}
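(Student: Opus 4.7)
The plan is to transport everything to the F\"ollmer measure $\Q$, on which $1/Z=\Ecal(N)$ has explosion time $\tau_\infty$, and then invoke the convergence results of Section~\ref{S:convergence}. Recall from the code book of Subsection~\ref{S:method} that $Z$ is a uniformly integrable $\P$--martingale if and only if $N$ converges $\Q$--almost surely on $\lc 0,\tau_\infty\lc$, and that the change-of-measure formula of Appendix~\ref{S:follmer} gives
\[
\E_\P[Z_\sigma h\1{\sigma<\tau_0}]=\E_\Q[h\1{\sigma<\tau_\infty}]
\]
for every $\sigma\in\Tcal$ and every nonnegative $\Fcal_\sigma$--measurable~$h$. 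Under $\Q$ the process $N$ is a local martingale on $\lc0,\tau_\infty\lc$, so Assumption~\ref{A:1} is satisfied with $A=0$; as soon as $(x-\log(1+x))*\nu^M$ is finite-valued (which, as in the proof of Theorem~\ref{T:LepMem}, is automatic under either hypothesis below), Assumption~\ref{A:2} also holds with $X=N$, $Y=L$, the compensator of $\mu^N$ under~$\Q$ being $\widehat\nu^N$ by Lemma~\ref{L:predc}.

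For \ref{T:NK:1} $\Longleftrightarrow$ \ref{T:NK:2} the change of measure rewrites~\eqref{eq:4.7} as $\sup_{\sigma\in\Tcal}\E_\Q[f(\epsilon N_\sigma-U_\sigma)\1{\sigma<\tau_\infty}]<\infty$. I would then apply Corollary~\ref{C:conv001} to the local $\Q$--martingale $\epsilon N$, invoking condition~\ref{T:conv1:c} when $\epsilon=1$ and condition~\ref{T:conv1:d} (after replacing $U$ by $-U$) when $\epsilon=-1$. Either way this sup condition is equivalent to the $\Q$--almost sure existence of $\lim_{t\to\tau_\infty}N_t$ in~$\R$, which is~\ref{T:NK:1}.

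For \ref{T:NK:3} $\Longleftrightarrow$ \ref{T:NK:4} I would first translate the two $\nu^M$--conditions into equivalent $\widehat\nu^N$--conditions using the involution $\phi(x)=-x/(1+x)$ from Appendix~\ref{A:SE}. Since $\Delta N=\phi(\Delta M)$ and $\widehat\nu^N=\nu^N/(1+z)$, a direct substitution yields, for $\eta'=\kappa/(1+\kappa)$ and $\kappa''=\eta/(1-\eta)$, the pathwise identities
\begin{align*}
(1+x)\log(1+x)\1{x>\kappa}*\nu^M &= -\log(1+z)\1{z<-\eta'}*\widehat\nu^N,\\
-x\1{x<-\eta}*\nu^M &= z\1{z>\kappa''}*\widehat\nu^N.
\end{align*}
Each side is nondecreasing and predictable, so Lemma~\ref{L:ELI}\ref{L:ELI:4} makes ``finite-valued at $\tau_\infty-$'' and ``extended locally integrable under $\Q$'' interchangeable for either display. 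The change of measure also turns the sup condition in~\ref{T:NK:4} into $\sup_{\sigma\in\Tcal}\E_\Q[f(\epsilon L_\sigma-U_\sigma)\1{\sigma<\tau_\infty}]<\infty$. Under these translations \ref{T:NK:3} becomes precisely the event~\eqref{T:convYX:3} of Theorem~\ref{T:convYX}, while~\ref{T:NK:4} asserts~\eqref{T:convYX:4} together with the sup condition on $\epsilon L$. The equality \eqref{T:convYX:3}~$=$~\eqref{T:convYX:4} of Theorem~\ref{T:convYX}, combined with Corollary~\ref{C:conv001} applied to the local $\Q$--martingale $\epsilon L$ (which converts convergence of $L$ into the sup condition and back), delivers the equivalence.

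The delicate point, as in the proof of Lemma~\ref{L:convYX}, will be verifying the extended local integrability of $(\Delta L)^-\wedge L^-$ required to legitimately apply Corollary~\ref{C:conv001} to $L$: this has to be extracted from the hypothesis that $z\1{z>\kappa''}*\widehat\nu^N$ is extended locally integrable, by combining the jump identity $\Delta L=\log(1+\Delta N)+\gamma$ with Lemma~\ref{L:convYX:1} and Lemma~\ref{L:ELI}\ref{L:ELI:5}.
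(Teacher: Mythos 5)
Your proposal is correct and follows essentially the same route as the paper: pass to the F\"ollmer measure via the code book, identify \eqref{eq:4.7} with \eqref{eq:T:conv:exp} for $X=N$ and apply Corollary~\ref{C:conv001} (the condition on $(\Delta N)^-\wedge N^-$ being automatic since $\Delta N>-1$), and for \ref{T:NK:3}$\Leftrightarrow$\ref{T:NK:4} use the involution identities to recast the hypotheses as the events \eqref{T:convYX:3} and \eqref{T:convYX:4} of Theorem~\ref{T:convYX}, closing the loop with Corollary~\ref{C:conv001} applied to $L$ via Lemma~\ref{L:convYX}. The identities you derive and the ``delicate point'' you flag about $(\Delta L)^-\wedge L^-$ are exactly the ones the paper handles (the latter in Lemma~\ref{L:convYX}), so there is no gap.
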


\begin{important remark}[Characterization of extended local integrability] \label{R:ELI Q P}
The extended local integrability under $\Q$ in Theorem~\ref{T:NK}\ref{T:NK:2}--\ref{T:NK:4} can also be phrased in terms of the ``model primitives'', that is, under $\P$. As this reformulation is somewhat subtle---in particular, extended local integrability under $\P$ is {\em not} equivalent---we have opted for the current formulation in terms of~$\Q$. A characterization under~$\P$ is provided in Appendix~\ref{App:Z}, which should be consulted by any reader who prefers to work exclusively under~$\P$. A crude, but simple, sufficient condition for $U$ to be extended locally integrable under~$\Q$ is that $U$ is bounded.
\qed
\end{important remark}

\begin{proof}[Proof of Theorem~\ref{T:NK}]
We use the ``code book'' from Subsection~\ref{S:method} freely. Throughout the proof, suppose $\epsilon=1$; the case $\epsilon=-1$ is similar. Observe that \eqref{eq:4.7} holds if and only if \eqref{eq:T:conv:exp} holds under~$\Q$ with $X=N$ and $\tau=\tau_\infty$. Since $\Delta N>-1$, Corollary~\ref{C:conv001} thus shows that \ref{T:NK:2} is equivalent to the convergence of $N$ under $\Q$, which is equivalent to~\ref{T:NK:1}.

To prove the equivalence of \ref{T:NK:3} and \ref{T:NK:4}, first note that $((1+x)\log(1+x) - x) * \nu^M$ is finite-valued under either condition, and hence so is~$L$. Note also the equalities
\[
(1+x)\log(1+x)\oo_{x\ge \kappa}*\nu^M=-\log(1+y)\oo_{y\le-\kappa/(1+\kappa)}*\widehat\nu^N
\]
and
\[
-x\oo_{x<-\eta}*\nu^M = y\oo_{y>\eta/(1-\eta)}*\widehat\nu^N.
\]
With the identifications in the ``code book'', \ref{T:NK:3} now states that the event~\eqref{T:convYX:3} has full probability under~$\Q$. By Theorem~\ref{T:convYX} this is equivalent to the event~\eqref{T:convYX:4} having full probability under $\Q$. Due to Corollary~\ref{C:conv001} this is equivalent to \ref{T:NK:4}.
\end{proof}

\begin{remark} \label{R:Uspecs}
We observe that, given condition~\ref{T:NK:1} or~\ref{T:NK:3} in Theorem~\ref{T:NK}, we may always choose $U$ to equal $(1-a)N$ or $(1-a)L$, respectively.
\qed
\end{remark}

\begin{corollary}[Generalized \citet{Lepingle_Memin_integrabilite} conditions] \label{C:NK1}
Fix $a\ne1$ and $\eta \in (0,1)$. The following condition is equivalent to Theorem~\ref{T:NK}\ref{T:NK:1}:
\begin{enumerate}[label={\rm(b$'\;\!$)},ref={\rm(b$'\;\!$)}]
\item\label{C:NK1:2} There exists an optional process $U$, extended locally integrable on $\lc0,\tau_\infty\lc$ under~$\Q$, such that 
\begin{equation}  \label{eq:NPN}
\sup_{\sigma\in\Tcal} \E_\P\left[ e^{A^a_\sigma - U_\sigma} \1{\sigma<\tau_0}\right] < \infty.
\end{equation}
\end{enumerate}
Moreover, the following conditions are equivalent to Theorem~\ref{T:NK}\ref{T:NK:3}:
\begin{enumerate}[label={\rm(d$'\;\!$)},ref={\rm(d$'\;\!$)}]
\item\label{C:NK1:4} $(1+x)\log(1+x) \oo_{x > \kappa} * \nu^M$ is finite-valued, $-x \oo_{x<-\eta} * \nu^M$ is extended locally integrable on $\lc0,\tau_\infty\lc$ under $\Q$, and
\begin{align}  \label{T:NK:eq1}
	\sup_{\sigma \in \mathcal{T}} \E_\P\left[e^{B^a_\sigma - U_\sigma}\1{\sigma<\tau_0}\right] < \infty
\end{align}
for some optional process $U$, extended locally integrable on $\lc0,\tau_\infty\lc$ under~$\Q$.
\end{enumerate}
\begin{enumerate}[label={\rm(d$''\;\!$)},ref={\rm(d$''\;\!$)}]
 \item\label{C:NK1:5} $(1+x)\log(1+x) \oo_{x > \kappa} * \nu^M$ is finite-valued and there exists an optional process $U$, extended locally integrable on~$\lc0,\tau_\infty\lc$ under~$\Q$, such that the family $(e^{B_\sigma^{a} - U_\sigma} \1{\sigma<\tau_0})_{\sigma\in\Tcal}$ is uniformly integrable.
 \end{enumerate}
If $a\leq 0$, these conditions are implied by the following:
 \begin{enumerate}[label={\rm(d$'''\;\!$)},ref={\rm(d$'''\;\!$)}]
\item\label{C:NK1:6}  \eqref{T:NK:eq1} holds for some optional process $U$ that is extended locally bounded on~$\lc0,\tau_\infty\lc$ under~$\Q$.
\end{enumerate}
\end{corollary}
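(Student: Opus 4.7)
The plan is to reduce each of the four conditions \ref{C:NK1:2} and \ref{C:NK1:4}--\ref{C:NK1:6} to a statement already covered by Theorem~\ref{T:NK} or Corollary~\ref{C:convYX2}, with Lemma~\ref{L:ABdecomp} serving as the bridge. That lemma gives, on $\{\sigma<\tau_0\}$, the identities $e^{A^a_\sigma - U_\sigma}= Z_\sigma e^{(1-a)N_\sigma - U_\sigma}$ and $e^{B^a_\sigma - U_\sigma}= Z_\sigma e^{(1-a)L_\sigma - U_\sigma}$, which match the integrands appearing in Theorem~\ref{T:NK}\ref{T:NK:2} and \ref{T:NK:4} once $f$ and $\epsilon$ are chosen to absorb the constant $1-a$. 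Throughout I would take $f(x) = e^{|1-a|x}$ (nondecreasing, with $f(x)\geq x$ for large $x$) and $\epsilon = \operatorname{sgn}(1-a)$, and rescale the optional process $U$ by the nonzero factor $1/|1-a|$, which preserves extended local integrability under $\Q$. With these choices, the equivalences Theorem~\ref{T:NK}\ref{T:NK:1}~$\Leftrightarrow$~\ref{C:NK1:2} and Theorem~\ref{T:NK}\ref{T:NK:3}~$\Leftrightarrow$~\ref{C:NK1:4} become direct reformulations of the Theorem~\ref{T:NK}\ref{T:NK:2} and \ref{T:NK:4} conditions, respectively.

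For \ref{C:NK1:5}, the implication from Theorem~\ref{T:NK}\ref{T:NK:3} follows by applying the last statement of Corollary~\ref{C:convYX2} under the F\"ollmer measure $\Q$ with $c = 1-a$: it produces an extended locally integrable $U$ such that $(e^{(1-a)L_\sigma - U_\sigma})_{\sigma\in\Tcal}$ is uniformly bounded by a constant~$K$ on $\{\sigma<\tau_\infty\}$. Multiplying by $Z_\sigma$ and using that $Z$ is a uniformly integrable $\P$--martingale then upgrades this bound to uniform integrability of $(e^{B^a_\sigma - U_\sigma}\oo_{\sigma<\tau_0})_{\sigma\in\Tcal}$ under $\P$. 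The converse implication is the main obstacle of the proof: uniform integrability trivially yields the $L^1$--boundedness required by Theorem~\ref{T:NK}\ref{T:NK:4}, but to recover the accompanying extended local integrability of $-x\oo_{x<-\eta}*\nu^M$ under $\Q$ (which is also part of \ref{T:NK:4}), one has to exploit the stronger tightness provided by uniform integrability, in combination with the characterization of extended local integrability under the F\"ollmer measure developed in Appendix~\ref{App:Z}.

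Finally, for \ref{C:NK1:6} the restriction $a\leq 0$ is equivalent to $1-a\geq 1$, which places us squarely in the regime of Corollary~\ref{C:convYX2}\ref{T:convYX2:c}. I would translate \eqref{T:NK:eq1} via $\E_\P[Z_\sigma\,\cdot\,\oo_{\sigma<\tau_0}] = \E_\Q[\,\cdot\,\oo_{\sigma<\tau_\infty}]$ into $\sup_{\sigma\in\Tcal}\E_\Q[e^{(1-a)L_\sigma - U_\sigma}\oo_{\sigma<\tau_\infty}]<\infty$, which is exactly the hypothesis \ref{T:convYX2:c} of Corollary~\ref{C:convYX2} under $\Q$ with $c=1-a\geq 1$, $Y=L$, and the same extended locally bounded $U$. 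The implication \ref{T:convYX2:c}~$\Rightarrow$~\ref{T:convYX2:a}~\&~\ref{T:convYX2:b} then delivers both the convergence of $L$ under $\Q$ and the finiteness of $y\oo_{y>\kappa}*\widehat\nu^N_{\tau_\infty-}$; translating back via $-x\oo_{x<-\eta}*\nu^M = y\oo_{y>\eta/(1-\eta)}*\widehat\nu^N$ and the analogous identity $(1+x)\log(1+x)\oo_{x\geq \kappa}*\nu^M = -\log(1+y)\oo_{y\leq -\kappa/(1+\kappa)}*\widehat\nu^N$ (together with Lemma~\ref{L:ELI}\ref{L:ELI:4} for the passage from finiteness to extended local integrability of the predictable terms) produces exactly the hypotheses required by \ref{C:NK1:4}.
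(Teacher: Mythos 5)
Your handling of the equivalences \ref{C:NK1:2} $\Longleftrightarrow$ Theorem~\ref{T:NK}\ref{T:NK:1} and \ref{C:NK1:4} $\Longleftrightarrow$ Theorem~\ref{T:NK}\ref{T:NK:3} via Lemma~\ref{L:ABdecomp}, the implication Theorem~\ref{T:NK}\ref{T:NK:3} $\Longrightarrow$ \ref{C:NK1:5} via the last statement of Corollary~\ref{C:convYX2}, and the treatment of \ref{C:NK1:6} via Corollary~\ref{C:convYX2}\ref{T:convYX2:c} all coincide with the paper's argument; your choice $f(x)=e^{|1-a|x}$ with the rescaled $U$ is in fact a cleaner rendering of the paper's $f(x)=e^{(1-a)x}$.

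The genuine gap is the direction \ref{C:NK1:5} $\Longrightarrow$ Theorem~\ref{T:NK}\ref{T:NK:3}, which you correctly flag as the main obstacle but then leave unresolved: ``exploit the stronger tightness \ldots{} in combination with Appendix~\ref{App:Z}'' is not an argument, and the route you sketch---recovering the extended local integrability of $-x\oo_{x<-\eta}*\nu^M$ under $\Q$ so as to fall back on \ref{C:NK1:4}---is not how the paper proceeds and is not obviously feasible. The paper argues by contradiction. From $L^1$--boundedness alone (implied by uniform integrability), Corollary~\ref{C:conv001} applied to $X=L$ under~$\Q$ already gives that $L$ converges $\Q$--almost surely, so $\inf_{t<\tau_\infty}((1-a)L_t-U_t)=-\Theta$ for some finite $\Theta\ge0$. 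If $N$ did not converge under~$\Q$, then $Z$ would explode with positive $\Q$--probability, so for every $\kappa_1$ the first time $\sigma$ that $Z$ exceeds $\kappa_1$ satisfies $\sigma<\tau_\infty$ on an event $C$ with $\Q(C)>0$, and
\[
\E_\Q\left[e^{(1-a)L_\sigma-U_\sigma}\oo_{\{\sigma<\tau_\infty\}\cap\{Z_\sigma\ge\kappa_1\}}\right] \ \ge\ \E_\Q\left[e^{-\Theta}\oo_C\right]\ >\ 0
\]
uniformly in $\kappa_1$; by Lemma~\ref{L:ABdecomp} the left-hand side equals $\E_\P[e^{B^a_\sigma-U_\sigma}\oo_{\{\sigma<\tau_0\}\cap\{Z_\sigma\ge\kappa_1\}}]$, which the assumed uniform integrability forces to be arbitrarily small for large $\kappa_1$---a contradiction. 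Hence $N$ converges under~$\Q$; together with the convergence of $L$ this puts full $\Q$--measure on the event \eqref{T:convYX:1} of Theorem~\ref{T:convYX}, hence on \eqref{T:convYX:3}, which after translating $\widehat\nu^N$ back to $\nu^M$ and invoking Lemma~\ref{L:ELI}\ref{L:ELI:4} is exactly Theorem~\ref{T:NK}\ref{T:NK:3}. Note that uniform integrability enters only through the vanishing of the expectation on the set where $Z$ is large, not through any tightness transfer to $\nu^M$; the counterexamples of Subsection~\ref{SS:counter NK} show that some such use of it is unavoidable.
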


\begin{proof}
In view of Lemma~\ref{L:ABdecomp}, the equivalences \ref{C:NK1:2} $\Longleftrightarrow$ Theorem~\ref{T:NK}\ref{T:NK:1} and \ref{C:NK1:4} $\Longleftrightarrow$ Theorem~\ref{T:NK}\ref{T:NK:3} follow by choosing $f(x)=e^{(1-a)x}$ for all $x \in \R$ and $\epsilon={\rm sign}(1-a)$ in Theorem~\ref{T:NK}.  The condition  \ref{C:NK1:5} is implied by 
Theorem~\ref{T:NK}\ref{T:NK:3}  thanks to the last statement in Corollary~\ref{C:convYX2}.  Now assume that  \ref{C:NK1:5} holds and assume for contradiction that $N$ does not converge under $\Q$. The assumed uniform integrability implies that for any $\varepsilon > 0$ there exists $\kappa_1 > 0$ such that
\begin{equation} \label{eq:C:NK1:001}
\sup_{\sigma \in \mathcal{T}} \E_\Q\left[e^{(1-a) L_\sigma - U_\sigma}\oo_{\{\sigma<\tau_\infty\} \cap \{1/Z_\sigma \le 1/\kappa_1\}}\right]  = \sup_{\sigma \in \mathcal{T}} \E_\P\left[e^{B^a_\sigma - U_\sigma}\oo_{\{\sigma<\tau_0\} \cap \{Z_\sigma\ge \kappa_1\}}\right] < \varepsilon,
\end{equation}
using again Lemma~\ref{L:ABdecomp}. Now, it follows from Corollary~\ref{C:conv001} with $X=L$ and $f(x) = e^{(1-a)x}$ for all $x\in\R$ that~$L$ converges under~$\Q$. Hence $\inf_{t<\tau} ((1-a)L_t-U_t) = -\Theta$ for some finite nonnegative random variable $\Theta$. Furthermore, since by assumption $N$ does not converge under $\Q$, there is an event $C$ with $\Q(C)>0$ such that $\sigma<\tau$ on $C$, where $\sigma=\inf\{t\geq 0:1/Z_t\le1/\kappa\}$. Consequently, the left-hand side of \eqref{eq:C:NK1:001} is bounded below by $\E_\Q[e^{-\Theta}\oo_C]>0$, independently of~$\varepsilon$. This gives the desired contradiction.

Finally, \ref{C:NK1:6} implies that $(1+x)\log(1+x) \oo_{x > \kappa} * \nu^M$ is finite-valued. Hence by Corollary~\ref{C:convYX2} it also implies the remaining conditions, provided $a\le0$.
\end{proof}

\begin{remark}
Without the assumption that $(1+x)\log(1+x) \oo_{x > \kappa} * \nu^M$ is finite-valued for some $\kappa>0$, the conditions \ref{C:NK1:4} and \ref{C:NK1:5}  in Corollary~\ref{C:NK1} can be satisfied for all $a>1$ even if $Z$ is not a uniformly integrable martingale. On the other hand, there exist uniformly integrable martingales $Z$ for which \eqref{T:NK:eq1} does not hold for all $a<1$. These points are illustrated in Example~\ref{ex:5 16} below.
\qed
\end{remark}

The implications Theorem~\ref{T:NK}\ref{T:NK:1} $\Longrightarrow$ Corollary~\ref{C:NK1}\ref{C:NK1:2} and Theorem~\ref{T:NK}\ref{T:NK:3} $\Longrightarrow$ Corollary~\ref{C:NK1}\ref{C:NK1:4}--\ref{C:NK1:6} seem to be new, even in the continuous case. The reverse directions
 imply several well-known criteria in the literature. Setting $a=0$ and $U=0$ in~\eqref{eq:NPN}, and using that $A^0$ is nondecreasing, we recover the first condition in Theorem~\ref{T:LepMem}. More generally, taking $a\in[0,1)$ and $U=0$ yields a strengthening of Theorem~I.1(5-$\alpha$) in \citet{Lepingle_Memin_integrabilite}. 
Indeed, the $L^1$-boundedness in~\eqref{T:NK:eq1}, rather than uniform integrability as assumed by L\'epingle and M\'emin (with $U=0$), suffices to conclude that $Z$ is a uniformly integrable martingale. This is however not the case when $A^a$ is replaced by $B^a$; the uniform integrability assumption in \ref{C:NK1:5} cannot the weakened to $L^1$-boundedness in general. Counterexamples to this effect are constructed in Subsection~\ref{SS:counter NK}. 
However,  the implication Corollary~\ref{C:NK1}\ref{C:NK1:4} $\Longrightarrow$ Theorem~\ref{T:NK}\ref{T:NK:3}, which also seems to be new, shows that if the jumps of $M$ are bounded away from zero then uniform integrability can be replaced by $L^1$--boundedness. 

In a certain sense our results quantify how far the L\'epingle and M\'emin conditions are from being necessary: the gap is precisely captured by the extended locally integrable (under $\Q$) process~$U$. In practice it is not clear how to find a suitable process $U$. A natural question is therefore how well one can do by restricting to the case $U=0$. Theorem~\ref{T:NK} suggests that one should look for other functions $f$ than the exponentials chosen in Corollary~\ref{C:NK1}. The best possible choice is $f(x)=x\oo_{x>\kappa}$ for some $\kappa>0$, which, focusing on $\epsilon=1$ and $A=A^0$ for concreteness, leads to the criterion
\[
\sup_{\sigma\in\Tcal} \E_\P\left[ e^{A_\sigma} N_\sigma e^{-N_\sigma}\1{N_\sigma>\kappa}\right] <\infty.
\]
Here one only needs to control $A$ on the set where $N$ takes large positive values. Moreover, on this set one is helped by the presence of the small term $N e^{-N}$.

We now state a number a further consequences of the above results, which complement and improve various criteria that have already appeared in the literature.  Again we refer the reader to Remark~\ref{R:ELI Q P} for an important comment on the extended local integrability assumptions appearing below.

\begin{corollary}[Kazamaki type conditions] \label{C:nonpred1}
Each of the following conditions implies that $Z$ is a uniformly integrable martingale:
\begin{enumerate}
\item\label{C:nonpred1:1} The running supremum of ${A}^a$ is extended locally integrable on~$\lc0,\tau_\infty\lc$ under~$\Q$ for some $a\ne1$.
\item\label{C:nonpred1:1'} The running supremum of ${B}^a$ is extended locally integrable on~$\lc0,\tau_\infty\lc$ under~$\Q$ for some $a\ne1$.
\item\label{C:nonpred1:2} ${\displaystyle \sup_{\sigma \in \mathcal{T}} \E_\P\left[\exp\left( \frac{1}{2} M_\sigma +  \left(   \log(1+x) - \frac{x^2 +2 x}{2(1+x)}\right) \oo_{x<0} * \mu^M_\sigma \right) \1{\sigma<\tau_0} \right] < \infty.}$
\item\label{C:nonpred1:2'} ${\displaystyle \left(\exp\left( \frac{1}{2} M_\sigma +  \frac{1}{2}\left(  (1+x)  \log(1+x) - x\right) * \nu^M_\sigma \right) \1{\sigma<\tau_0} \right)_{\sigma \in \mathcal{T}}}$ is uniformly integrable.
\item\label{C:nonpred1:3} $M$ is a uniformly integrable martingale and
\begin{align}  
\E_\P\left[\exp\left( \frac{1}{2} M_{\tau_0-} +  \left(   \log(1+x) - \frac{x^2 +2 x}{2(1+x)}\right) \oo_{x<0} * \mu^M_{\tau_0-} \right) \right] &< \infty.  \label{eq:NPN3b}
\end{align}
\item\label{C:nonpred1:3'} $M$ is a uniformly integrable martingale and
\begin{align*}  
\E_\P\left[\exp\left( \frac{1}{2} M_{\tau_0-} +   \frac{1}{2}\left(  (1+x)  \log(1+x) - x\right) * \nu^M_{\tau_0-} \right) \right] &< \infty. 
\end{align*}
\item\label{C:nonpred1:4} $M$ satisfies $\Delta M \geq  -1+\delta$ for some $\delta > 0$ and
\begin{align}  \label{eq:NPN4}
\sup_{\sigma \in \mathcal{T}} \E_\P\left[ \exp\left(\frac{M_\sigma}{1+\delta} - \frac{1-\delta}{2 + 2 \delta} [M^c,M^c]\right)  \1{\sigma<\tau_0} \right] < \infty.
\end{align}
\end{enumerate}
\end{corollary}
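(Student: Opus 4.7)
The plan is to reduce each of the seven conditions to Corollary~\ref{C:NK1}, using Lemma~\ref{L:ABdecomp} together with elementary pointwise estimates of the integrands in $A^a$ and $B^a$.

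For \ref{C:nonpred1:1} and \ref{C:nonpred1:1'}, the natural choice is $U=\sup_{t\le\cdot}A^a_t$, respectively $U=\sup_{t\le\cdot}B^a_t$. Then $A^a-U\le 0$ (respectively $B^a-U\le 0$), so $e^{A^a-U}\le 1$ and the $L^1$-boundedness in Corollary~\ref{C:NK1}\ref{C:NK1:2}, respectively the uniform integrability in Corollary~\ref{C:NK1}\ref{C:NK1:5}, holds trivially. Extended local integrability of~$U$ under~$\Q$ is the standing hypothesis, and the auxiliary finiteness of $(1+x)\log(1+x)\oo_{x>\kappa}*\nu^M$ needed in~\ref{C:NK1:5} follows from the finite-valuedness of $B^a$, which forces $((1+x)\log(1+x)-x)*\nu^M$ to be finite-valued.

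For \ref{C:nonpred1:2}--\ref{C:nonpred1:3'} I compute $A^{1/2}$ and $B^{1/2}$ and dominate them pointwise. Writing $h(x)=\log(1+x)-(x^2+2x)/(2(1+x))$, a direct calculation gives $h'(x)=-x^2/(2(1+x)^2)\le 0$ with $h(0)=0$, hence $h\le 0$ on $[0,\infty)$ and
\[
A^{1/2}=\frac{M}{2}+h(x)*\mu^M\le\frac{M}{2}+h(x)\oo_{x<0}*\mu^M,
\]
which is the exponent in \ref{C:nonpred1:2} and \ref{C:nonpred1:3}. Since $x-\log(1+x)\ge 0$ for $x>-1$, analogously
\[
B^{1/2}\le\frac{M}{2}+\frac{1}{2}((1+x)\log(1+x)-x)*\nu^M,
\]
which is the exponent in \ref{C:nonpred1:2'} and \ref{C:nonpred1:3'}. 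These comparisons give \ref{C:nonpred1:2} and \ref{C:nonpred1:2'} immediately via Corollary~\ref{C:NK1}\ref{C:NK1:2}/\ref{C:NK1:5} with $U=0$. For \ref{C:nonpred1:3} and \ref{C:nonpred1:3'} it remains to upgrade the expectation at~$\tau_0-$ to a uniform estimate over bounded stopping times. The dominating exponent has the form $W=M/2+g$ with $g$ a nonnegative nondecreasing process, so Jensen's inequality combined with the monotonicity of~$g$ yields
\[
\E_\P\bigl[e^{W_{\tau_0-}}\mid\Fcal_\sigma\bigr]\ge e^{g_\sigma}\E_\P\bigl[e^{M_{\tau_0-}/2}\mid\Fcal_\sigma\bigr]\ge e^{g_\sigma+M_\sigma/2}=e^{W_\sigma}
\]
for every $\sigma\in\Tcal$, using that $M$ is a uniformly integrable martingale. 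Thus $(e^{W_\sigma})_{\sigma\in\Tcal}$ is dominated by a uniformly integrable martingale, hence uniformly integrable, exactly as needed.

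For \ref{C:nonpred1:4} the choice $a=1/(1+\delta)\ne 1$ gives
\[
A^{a}=\frac{M}{1+\delta}-\frac{1-\delta}{2(1+\delta)}[M^c,M^c]+h_\delta(x)*\mu^M,\qquad h_\delta(x)=\log(1+x)-\frac{x^2/(1+\delta)+x}{1+x}.
\]
Differentiating one obtains $h_\delta'(x)=-x(x-(\delta-1))/((1+\delta)(1+x)^2)$, so on the support $[\delta-1,\infty)$ with $\delta\in(0,1]$ the function~$h_\delta$ increases on $[\delta-1,0]$ and decreases on $[0,\infty)$; since $h_\delta(0)=0$, this forces $h_\delta\le 0$ on $[-1+\delta,\infty)$, which is exactly where $\Delta M$ lies. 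Hence $A^{a}$ is dominated by the exponent in~\eqref{eq:NPN4}, and Corollary~\ref{C:NK1}\ref{C:NK1:2} with $U=0$ concludes. The main obstacle is this calculus check in \ref{C:nonpred1:4}; all other steps fit the same template of a pointwise comparison followed by an application of Corollary~\ref{C:NK1}.
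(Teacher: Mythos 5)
Your argument tracks the paper's proof essentially step for step: parts \ref{C:nonpred1:1} and \ref{C:nonpred1:1'} via Corollary~\ref{C:NK1}\ref{C:NK1:2} and \ref{C:NK1:5} with $U$ equal to the running supremum (the paper takes $U=A^a$, resp.\ $U=B^a$, but your choice is if anything better matched to the hypothesis, which controls the running supremum rather than the process itself), parts \ref{C:nonpred1:2}--\ref{C:nonpred1:3'} via the pointwise estimates $h\le0$ on $[0,\infty)$, $x-\log(1+x)\ge0$, and the conditional-Jensen/submartingale argument, and part \ref{C:nonpred1:4} via the comparison $h_\delta\le0$. Your calculus is correct, including the direction of the inequality: what is needed is $\log(1+x)-\frac{x^2/(1+\delta)+x}{1+x}\le0$ on the range of the jumps (the paper's display asserts $\ge0$, evidently a typo, since already for $\delta=1$, $x=1$ the left side equals $\log 2-\tfrac34<0$).

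The one genuine issue is your restriction to $\delta\in(0,1]$ in part \ref{C:nonpred1:4}: the statement allows any $\delta>0$, and for $\delta>1$ the pointwise comparison fails. Indeed, from $h_\delta'(x)=-x(x+1-\delta)/((1+\delta)(1+x)^2)$ one sees that for $\delta>1$ the function $h_\delta$ increases on $(0,\delta-1)$ and decreases on $(\delta-1,\infty)$, so its maximum over the relevant range $[\delta-1,\infty)$ is attained at $x=\delta-1$ with $h_\delta(\delta-1)>0$ (for example $\delta=2$ gives $h_2(1)=\log 2-\tfrac23>0$). Hence $A^{1/(1+\delta)}$ is \emph{not} dominated by the exponent in \eqref{eq:NPN4}, and \eqref{eq:NPN} with $U=0$ does not follow. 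The paper's own proof asserts the inequality ``for all $x\ge-1+\delta$'' without restricting $\delta$ and therefore has exactly the same defect; your version is the honest one, but as written it does not establish \ref{C:nonpred1:4} for $\delta>1$. To cover that range one would need either a nonzero extended locally $\Q$--integrable process $U$ absorbing the positive contribution $h_\delta\oo_{\delta-1\le x\le x_\delta}*\mu^M$ (where $x_\delta$ is the positive root of $h_\delta$), which the hypothesis does not supply, or a different choice of $a$; otherwise the condition should be read with $\delta\le1$, consistent with the remark following the corollary.
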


\begin{proof}
For~\ref{C:nonpred1:1} and~\ref{C:nonpred1:1'}, take $U=A^a$ and $U=B^a$ in Corollary~\ref{C:NK1}\ref{C:NK1:2} and~\ref{C:NK1:5}, respectively. For~\ref{C:nonpred1:2} and~\ref{C:nonpred1:2'}, take $U=0$ and $a=1/2$ in Corollary~\ref{C:NK1}\ref{C:NK1:2} and~\ref{C:NK1:5}, and use the inequalities $\log(1+x) \leq (x^2 + 2x)/(2 +2x)$ for all $x \geq 0$. For~\ref{C:nonpred1:3} and~\ref{C:nonpred1:3'}, note that if $M$ is a uniformly integrable martingale then the exponential processes in~\ref{C:nonpred1:2} and~\ref{C:nonpred1:2'} are submartingales, thanks to the inequality $(1+x)\log(1+x)-x\ge0$ for all $x > -1$. Thus~\ref{C:nonpred1:3} implies that~\ref{C:nonpred1:2} holds, and~\ref{C:nonpred1:3'} implies that~\ref{C:nonpred1:2'} holds. Finally, due to the inequality
\[
\log(1+x) - \frac{x^2/(1+\delta) + x}{1+x} = \frac{1}{1+\delta} \int_0^x \frac{-y}{(1+y)^2} (1-\delta+y) \dd y \ge 0
\]
for all $x \ge -1+\delta$, \ref{C:nonpred1:4} implies that Corollary~\ref{C:NK1}\ref{C:NK1:2} holds with $a=1/(1+\delta)$ and $U=0$.
\end{proof}

\begin{remark}
We make the following observations concerning Corollary~\ref{C:nonpred1}:
\begin{itemize}
\item The condition in Corollary~\ref{C:nonpred1}\ref{C:nonpred1:1} is sufficient but not necessary for the conclusion, as Example~\ref{ex:5 16} below illustrates. Similarly, it can be shown that the condition in Corollary~\ref{C:nonpred1}\ref{C:nonpred1:1'} is not necessary for $Z$ to be a uniformly integrable. However, the condition in Theorem~\ref{T:NK}\ref{T:NK:3} implies that $B^a$ is extended locally integrable on~$\lc0,\tau_\infty\lc$ under~$\Q$. In view of the ``code book'', this can be seen by piecing together Lemmas~\ref{L:convYX} and~\ref{L:ABdecomp}, Theorem~\ref{T:convYX}, and~\eqref{eq:VV}.
\item The uniform integrability of $M$ is needed to argue that \eqref{eq:NPN3b} implies~\ref{C:nonpred1:2} in Corollary~\ref{C:nonpred1}. Even if $M$ is continuous this implication is false in general; see \citet{Ruf_Novikov} for examples.
\qed
\end{itemize}
\end{remark}

Corollary~\ref{C:nonpred1}\ref{C:nonpred1:3} appears already in   Proposition~I.3 in \citet{Lepingle_Memin_integrabilite}. Corollary~\ref{C:nonpred1}\ref{C:nonpred1:4} with the additional assumption that  $\delta \leq 1$ implies Proposition~I.6 in \citet{Lepingle_Memin_integrabilite}.   Also, conditions \ref{C:nonpred1:3}, \ref{C:nonpred1:3'} and (a somewhat weaker version of) \ref{C:nonpred1:4} below have appeared in \cite{Yan_1982}.    In particular, if $\Delta M \geq 0$ and $\delta = 1$, \eqref{eq:NPN4} yields Kazamaki's condition verbatim.

\begin{example} \label{ex:5 16}
Let $Y$ be a nonnegative random variable such that $\E_\P[Y]<\infty$ and $\E_\P[(1+Y)\log(1+Y)]=\infty$, let $\Theta$ be a $\{0,1\}$--valued random variable with $\P(\Theta=1)=1/(1+2\E_\P[Y])$, and let $W$ be standard Brownian motion. Suppose $Y$, $\Theta$, $W$ are pairwise independent. Now define 
\[
M = \left(Y\Theta - \frac{1}{2}(1-\Theta) + W - W_1\right)\oo_{\lc 1, \infty\lc}.
\]
Then $M$ is a martingale under its natural filtration with $\Delta M\ge-1/2$, and the process $Z=\Ecal(M)$ is not a uniformly integrable martingale, as it tends to zero as $t$ tends to infinity. However,
\[
((1+x)\log(1+x) - x) * \nu^M_1=\E_\P[(1+Y)\log(1+Y)-Y]=\infty,
\]
which implies that conditions \ref{C:NK1:4}--\ref{C:NK1:5} in Corollary~\ref{C:NK1} are satisfied for any $a>1$, apart from the finiteness of $(1+x)\log(1+x) \oo_{x\geq \kappa}* \nu^M$ for some $\kappa>0$.

Consider now the process $\widetilde Z=(Z_{t\wedge1})_{t\ge0} = (Y\Theta + \frac{1}{2}(1+\Theta)) \oo_{\lc 1, \infty\lc}$. This is a uniformly integrable martingale. Nonetheless, \eqref{eq:4.7} fails for any $a<1$. We now consider the process
$$\widetilde A^a = \left(\log(1+\Delta M_1) - (1-a) \frac{\Delta M_1}{1+\Delta M_1}\right) \oo_{\lc 1, \infty\lc}$$ for each $a \in \R$ as in \eqref{eq:A}. Then $\E_\P[\widetilde A_1^a \widetilde Z_1] = \infty$, which implies that $\widetilde A^a$ is not extended locally integrable under~$\Q$ for each $a \in \R$, as can be deduced based on Lemma~\ref{L:extended locally}. In particular, the condition in Corollary~\ref{C:nonpred1}\ref{C:nonpred1:1} is not satisfied.
\qed
\end{example}

\subsection{Further characterizations}

We now present a number of other criteria that result from our previous analysis, most of which seem to be new. Again, the reader should keep Remark~\ref{R:ELI Q P} in mind.

\begin{theorem}[Necessary and sufficient conditions based on extended localization]\label{T:further}
Let  $\epsilon \in \{-1,1\}$, $\eta \in (0,1)$, and $\kappa>0$.
Then the following  conditions are equivalent:
\begin{enumerate}[label={\rm(\alph*)},ref={\rm(\alph*)}]
	\item\label{T:further:1} $Z$ is a uniformly integrable martingale.
	\item\label{T:further:2} $(\epsilon N)^+$ is extended locally integrable on~$\lc0,\tau_\infty\lc$   under $\Q$.
	\item\label{T:further:3} $[M^c, M^c]  + (x^2 \wedge |x|) * \nu^M$ is extended locally integrable  on~$\lc0,\tau_\infty\lc$  under $\Q$.
\end{enumerate}
Moreover, the following two conditions are equivalent:
\begin{enumerate}[label={\rm(\alph*)},ref={\rm(\alph*)}, resume]
	\item\label{T:further:4} $Z$ is a uniformly integrable martingale and $((\Delta M)^- / (1+\Delta M))^2$ is extended locally integrable on~$\lc0,\tau_\infty\lc$  under $\Q$.
	\item\label{T:further:5} ${\displaystyle [M^c, M^c]  + {(x/(1+x))^2} * \mu^M}$ is extended locally integrable on~$\lc0,\tau_\infty\lc$  under $\Q$.
\end{enumerate}
Furthermore, the following conditions are equivalent:
\begin{enumerate}[label={\rm(\alph*)},ref={\rm(\alph*)},resume]
\item\label{T:further:6}  $Z$ is a uniformly integrable martingale and  $(1+x)\log(1+x) \oo_{x>\kappa} * \nu^M$ is extended locally integrable on~$\lc0,\tau_\infty\lc$  under $\Q$.
\item\label{T:further:7} ${\displaystyle  [M^c,M^c] + ((1+x)\log(1+x)-x) * \nu^M}$ is extended locally integrable on~$\lc0,\tau_\infty\lc$  under $\Q$.
\item\label{T:further:8}  $(1+x)\log(1+x) \oo_{x>\kappa} * \nu^M$ is finite-valued, $-x \oo_{x<-\eta} * \nu^M$ is extended locally integrable on~$\lc0,\tau_\infty\lc$  under $\Q$, and $(\epsilon L)^+$ is extended locally integrable under $\Q$.
\end{enumerate}
\end{theorem}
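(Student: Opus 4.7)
The plan is to reduce all three equivalences to the convergence results of Section~\ref{S:convergence} by passing to the F\"ollmer measure $\Q$ via the ``code book'' from Subsection~\ref{S:method}. Theorem~\ref{T:reciprocal} provides the change-of-variables formula $\int g(x)\,\nu^M(\dd x)=\int g(\phi(y))(1+y)\,\widehat\nu^N(\dd y)$, where $\phi(y)=-y/(1+y)$ is the involution of~\eqref{eq:phi}; under this identification $\Delta N=-\Delta M/(1+\Delta M)\in(-1,\infty)$, and $Z$ is a uniformly integrable $\P$--martingale if and only if $\lim_{t\to\tau_\infty}N_t$ exists in $\R$ $\Q$--almost surely. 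For \ref{T:further:1} $\Leftrightarrow$ \ref{T:further:2}: since $(\Delta N)^-\le 1$, the side jump condition in Theorem~\ref{T:conv}\ref{T:conv:a} is automatic, so applying the equivalence \ref{T:conv:a} $\Leftrightarrow$ \ref{T:conv:b''} (with $A=0$) to both $X=N$ and $X=-N$ under $\Q$ settles the claim. For \ref{T:further:3}, the change-of-variables formula rewrites $[M^c,M^c]+(x^2\wedge|x|)*\nu^M$ as $[N^c,N^c]+h(y)*\widehat\nu^N$ with $h(y)=(y^2/(1+y))\wedge|y|$, which is comparable to $y^2\wedge|y|$ on $(-1,\infty)$ up to bounded constants. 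Since the translated quantity is predictable, Lemma~\ref{L:ELI}\ref{L:ELI:4} reduces ``extended locally integrable'' to pointwise finiteness, matching condition \ref{T:conv:c} of Theorem~\ref{T:conv} applied to $X=N$.

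For \ref{T:further:4} $\Leftrightarrow$ \ref{T:further:5}, the identities $(\Delta M)^-/(1+\Delta M)=(\Delta N)^+$ and $(x/(1+x))^2*\mu^M=y^2*\mu^N$ show that \ref{T:further:5} asserts that $[N,N]$ is extended locally integrable under $\Q$. The direction \ref{T:further:5} $\Rightarrow$ \ref{T:further:4} is routine: after localizing so that $[N,N]$ is dominated by an integrable random variable, $N$ becomes $L^2$--bounded (hence convergent, making $Z$ uniformly integrable) and $((\Delta N)^+)^2\le[N,N]$ is extended locally integrable. Conversely, $Z$ uniformly integrable gives convergence of $N$ under $\Q$; combining extended local integrability of $((\Delta N)^+)^2$ with the bound $(\Delta N)^-\le 1$ renders $\Delta N$ extended locally integrable, and further localization allows us to assume $|\Delta N|\le 1$. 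Theorem~\ref{T:conv}\ref{T:conv:c} then gives $(y^2\wedge|y|)*\widehat\nu^N_{\tau_\infty-}<\infty$, which Lemma~\ref{L:BJ} upgrades to $y^2*\mu^N_{\tau_\infty-}<\infty$, so that $[N,N]_{\tau_\infty-}<\infty$. A final application of Lemma~\ref{L:ELI}\ref{L:ELI:3} concludes.

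For \ref{T:further:6}--\ref{T:further:8}, the change-of-variables formula yields the identities $(1+x)\log(1+x)\oo_{x>\kappa}*\nu^M=-\log(1+y)\oo_{y<-\kappa'}*\widehat\nu^N$, $((1+x)\log(1+x)-x)*\nu^M=(y-\log(1+y))*\widehat\nu^N$, and $-x\oo_{x<-\eta}*\nu^M=y\oo_{y>\eta'}*\widehat\nu^N$ with $\kappa'=\kappa/(1+\kappa)$ and $\eta'=\eta/(1-\eta)$. After these substitutions the three conditions match (up to the extended local integrability/finiteness issue) the events \eqref{T:convYX:3}, \eqref{T:convYX:2}, and \eqref{T:convYX:4} of Theorem~\ref{T:convYX}, whose equality delivers the convergence parts of the equivalence; Lemma~\ref{L:convYX} converts extended local integrability of $(\epsilon L)^+$ into convergence of~$L$. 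The translated integrals are predictable, so Lemma~\ref{L:ELI}\ref{L:ELI:4} upgrades the pointwise finiteness provided by Theorem~\ref{T:convYX} to extended local integrability. The elementary bound $-\log(1+y)\oo_{y\le-\kappa'}\le C(y-\log(1+y))$, with $C=C(\kappa')$ finite since the ratio is continuous on $(-1,-\kappa']$ and tends to $1$ at $-1$, bridges the jump condition in \ref{T:further:6} with the one in \ref{T:further:7}.

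The main obstacle is careful bookkeeping: translating every quantity via Theorem~\ref{T:reciprocal} and repeatedly promoting pointwise finiteness to extended local integrability through predictability and Lemma~\ref{L:ELI}\ref{L:ELI:4}. The single most substantive step is arranging the localization in the proof of \ref{T:further:4} $\Rightarrow$ \ref{T:further:5} so that Lemma~\ref{L:BJ} can be invoked on a truncated jump measure supported in $[-1,1]$; once $|\Delta N|\le 1$ is secured, the Lemma~\ref{L:BJ}/Lemma~\ref{L:ELI}\ref{L:ELI:3} combination yields extended local integrability of $[N,N]$ directly.
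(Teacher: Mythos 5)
Your proposal follows the paper's proof essentially step for step: translate everything under the F\"ollmer measure via Theorem~\ref{T:reciprocal} and the ``code book'', reduce \ref{T:further:1}--\ref{T:further:3} to Theorem~\ref{T:conv} (using that $(\Delta N)^-\le 1$ and that Lemma~\ref{L:ELI}\ref{L:ELI:4} converts finiteness of predictable nondecreasing processes into extended local integrability), identify \ref{T:further:5} with extended local integrability of $[N,N]$, and reduce \ref{T:further:6}--\ref{T:further:8} to the equality of the events \eqref{T:convYX:3}, \eqref{T:convYX:2}, \eqref{T:convYX:4} in Theorem~\ref{T:convYX} together with Lemma~\ref{L:convYX}. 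The change-of-variables identities you record are exactly those used in the paper.

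The one step that does not survive scrutiny is the claim, in the direction \ref{T:further:4} $\Rightarrow$ \ref{T:further:5}, that ``further localization allows us to assume $|\Delta N|\le 1$.'' Extended local integrability of $\Delta N$ only yields $\sup_t|\Delta N^{\tau_n}_t|\le\Theta_n$ with $\Theta_n$ \emph{integrable}, not bounded, so no localization makes the jumps uniformly bounded, and Lemma~\ref{L:BJ} cannot be applied to the full jump measure of $N$. The repair is either to treat the large jumps separately---$|y|\oo_{|y|>1}*\widehat\nu^N_{\tau_\infty-}<\infty$, obtained from Theorem~\ref{T:conv}\ref{T:conv:c}, forces finitely many jumps with $|y|>1$, so their squared sum is finite, while Lemma~\ref{L:BJ} applied to the truncated measure $\oo_{|y|\le1}\mu^N$ handles the small jumps---or, more directly, to invoke Corollary~\ref{C:convergence_QV} (as the paper does), which is precisely the statement $\{N\text{ converges}\}=\{[N,N]_{\tau_\infty-}<\infty\}$ under the jump-integrability hypothesis you have already established; Lemma~\ref{L:ELI}\ref{L:ELI:3} applied to $[N,N]$ then finishes as you indicate.
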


\begin{proof}[Proof of Theorem~\ref{T:further}]
Once again we use the ``code book'' from Subsection~\ref{S:method} freely. A calculation using Theorem~\ref{T:reciprocal} yields
\[
[M^c, M^c]  + (x^2 \wedge |x|) * \nu^M = \langle N^c,N^c\rangle + \left(\frac{y^2}{1+y}\wedge|y|\right)*\widehat\nu^N.
\]
The equivalence of \ref{T:further:1}--\ref{T:further:3} now follows from Theorem~\ref{T:conv} using the inequalities $(y^2\wedge|y|)/2 \le (y^2/(1+y))\wedge|y| \le 2(y^2\wedge|y|)$.
 
Since $(\Delta N)^+=(\Delta M)^-/(1+\Delta M)$ and $\Delta N>-1$, \ref{T:further:4} holds if and only if $N$ converges and $(\Delta N)^2$ is extended locally integrable under~$\Q$. By Corollary~\ref{C:convergence_QV} and Lemma~\ref{L:ELI}\ref{L:ELI:3}, this holds if and only if $[N,N]$ is extended locally integrable under~$\Q$. Since $[N,N]=\langle M^c,M^c\rangle+(x/(1+x))^2*\mu^M$, this is equivalent to \ref{T:further:5}.

To prove the equivalence of \ref{T:further:6}--\ref{T:further:8}, first note that $((1+x)\log(1+x) - x) * \nu^M$ is finite-valued under either condition, and hence so is~$L$. Note also the equalities
\begin{align*}
(1+x)\log(1+x)\oo_{x\ge \kappa}*\nu^M  &=-\log(1+y)\oo_{y\le-\kappa/(1+\kappa)}*\widehat\nu^N;\\
 [M^c,M^c] + ((1+x)\log(1+x)-x) * \nu^M &= [N^c,N^c] + (y-\log(1+y)) * \widehat\nu^N;\\
-x\oo_{x<-\eta}*\nu^M &= y\oo_{y>\eta/(1-\eta)}*\widehat\nu^N.
\end{align*}
With the identifications in the ``code book'', \ref{T:further:6} now states that the event~\eqref{T:convYX:3} has full probability under~$\Q$. Moreover, \ref{T:further:7} states that the event~\eqref{T:convYX:2} has full probability under $\Q$. Thanks to Lemma~\ref{L:convYX},  \ref{T:further:8} states that \eqref{T:convYX:4} has full probability under $\Q$. Thus all three conditions are equivalent by Theorem~\ref{T:convYX}.
\end{proof}

\begin{remark}
We make the following observations concerning Theorem~\ref{T:further}:
\begin{itemize}
\item If the jumps of $M$ are bounded away from $-1$, that is, $\Delta M \geq -1+\delta$ for some $\delta > 0$, then the second condition in Theorem~\ref{T:further}\ref{T:further:4} is automatically satisfied.
\item If $[M^c, M^c]_{\tau_0-}  + (x^2 \wedge |x|) * \nu^M_{\tau_0-} $ is extended locally integrable then $\tau_0 = \infty$ and $Z_\infty>0$ by Theorem~\ref{T:conv}. Contrast this with the condition in Theorem~\ref{T:further}\ref{T:further:3}.
\qed
\end{itemize}
\end{remark}

The implication \ref{T:further:3} $\Longrightarrow$ \ref{T:further:1} of Theorem~\ref{T:further} is proven in Theorem~12 in \cite{Kabanov/Liptser/Shiryaev:1979} if the process  in Theorem~\ref{T:further}\ref{T:further:3} is not only extended locally integrable, but bounded.

\section{Counterexamples}  \label{S:examp}

In this section we collect several examples of local martingales that illustrate the wide range of asymptotic behavior that can occur. This showcases the sharpness of the results in Section~\ref{S:convergence}. In particular, we focus on the role of the extended uniform integrability of the jumps.

\subsection{Random walk with large jumps}  \label{A:SS:lack}

Choose a sequence $(p_n)_{n \in \N}$ of real numbers such that $p_n\in(0,1)$ and $\sum_{n =1}^\infty p_n < \infty$. Moreover, choose a sequence $(x_n)_{n \in \N}$ of real numbers.
Then let $(\Theta_n)_{n \in \N}$ be a sequence of independent random variables with $\P(\Theta_n = 1) = p_n$ and $\P(\Theta_n = 0) = 1-p_n$ for all $n \in N$.  Now define a process $X$ by
\begin{align*}
	X_t = \sum_{n =1}^{[t]} x_n \left(1 - \frac{\Theta_n}{p_n}\right),
\end{align*}
where $[t]$ is the largest integer less  than or equal to  $t$, and let $\F$ be its natural filtration. Clearly $X$ is a locally bounded martingale. The Borel-Cantelli lemma implies that  $\Theta_n$ is nonzero for only finitely many $n \in \N$, almost surely, whence for all sufficiently large $n \in \N$ we have $\Delta X_n = x_n$. By choosing a suitable sequence $(x_n)_{n \in \N}$ one may therefore achieve essentially arbitrary asymptotic behavior. This construction was inspired by an example due to George Lowther that appeared on his blog Almost Sure on December 20, 2009.

\begin{lemma} \label{L:ex1}
With the notation of this subsection, $X$ satisfies the following properties:
\begin{enumerate}
\item\label{L:ex1:1} $\lim_{t \to \infty}  X_t$ exists in $\R$ if and only if $\lim_{m\to \infty} \sum_{n = 1}^m x_n$ exists in $\R$.
\item\label{L:ex1:3} $(1 \wedge x^2) * \mu^X_{\infty-} < \infty$ if and only if $[X,X]_{\infty-} < \infty$ if and only if $\sum_{n = 1}^\infty x_n^2 < \infty$.
\item\label{L:ex1:2} $X$ is a semimartingale on $[0,\infty]$ if and only if $(x^2\wedge|x|)*\nu^X_{\infty-}<\infty$ if and only if $X$ is a uniformly integrable martingale if and only if  $\sum_{n = 1}^\infty |x_n| < \infty$.
\end{enumerate}
\end{lemma}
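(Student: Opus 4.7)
The common thread in all three parts is the Borel--Cantelli lemma: since $\sum_n p_n < \infty$, there is an almost surely finite random index $N$ beyond which $\Theta_n = 0$ and hence $\Delta X_n = x_n$. I would establish this reduction once and then reuse it. For \ref{L:ex1:1}, the identity $X_m - X_N = \sum_{n=N+1}^m x_n$ for $m > N$ makes the equivalence with convergence of $\sum_n x_n$ immediate. For \ref{L:ex1:3}, $[X,X]_{\infty-} = \sum_n (\Delta X_n)^2$, whose tail beyond $N$ equals the tail of $\sum_n x_n^2$; and for $(1 \wedge x^2) * \mu^X_{\infty-}$ one further notes that if either of the truncated sums is finite then $|x_n| \to 0$ along the tail, so $1 \wedge x_n^2 = x_n^2$ eventually and the two series differ by only finitely many terms.

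The substance lies in \ref{L:ex1:2}. The plan hinges on computing the predictable compensator explicitly: since $\Delta X_n$ takes the values $x_n$ and $-x_n(1-p_n)/p_n$ with $\Fcal_{n-1}$-conditional probabilities $1 - p_n$ and $p_n$, one has
\[
\nu^X(\{n\}, \dd x) = (1 - p_n)\, \delta_{x_n}(\dd x) + p_n\, \delta_{-x_n(1-p_n)/p_n}(\dd x).
\]
To show $(x^2 \wedge |x|) * \nu^X_{\infty-} < \infty \Longleftrightarrow \sum_n |x_n| < \infty$, I would split the range of $|x_n|$ into the three regimes $|x_n| \leq p_n/(1-p_n)$, $p_n/(1-p_n) < |x_n| \leq 1$, and $|x_n| > 1$, on which the $\nu^X$-integrand evaluates to $x_n^2(1-p_n)/p_n$, $(1-p_n)(x_n^2 + |x_n|)$, and $2(1-p_n)|x_n|$, respectively. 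In the first regime the contribution $x_n^2(1-p_n)/p_n$ is at most $|x_n| \leq p_n/(1-p_n) \leq 2p_n$ once $p_n \leq 1/2$, hence summable by hypothesis regardless of the $x_n$; in the other two regimes it is comparable to $|x_n|$. The equivalence follows.

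The cycle in \ref{L:ex1:2} then closes as follows. If $\sum_n |x_n| < \infty$, then $\sum_n \E[|\Delta X_n|] = 2\sum_n (1-p_n)|x_n| < \infty$, so the partial sums of $X$ converge in $L^1$ and $X$ is a uniformly integrable martingale; extending by its $L^1$-limit produces a martingale, hence a semimartingale, on $[0,\infty]$. Conversely, if $X$ is uniformly integrable, Lemma~\ref{L:ELI}\ref{L:ELI:6} gives that $X$, and hence $X^-$, is extended locally integrable, and the implication \ref{T:conv:a} $\Longrightarrow$ \ref{T:conv:c} of Theorem~\ref{T:conv} (with $A = 0$ and $X^c = 0$) yields $(x^2 \wedge |x|) * \nu^X_{\infty-} < \infty$. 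Finally, the equivalence \ref{C:conv001:e} $\Longleftrightarrow$ \ref{C:conv001:d} of Corollary~\ref{C:conv001}, combined with the same Theorem~\ref{T:conv} implication, delivers $(x^2 \wedge |x|) * \nu^X_{\infty-} < \infty$ from the semimartingale clause. The main obstacle throughout is the three-regime compensator analysis; everything else is routine.
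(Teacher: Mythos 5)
Your treatment of parts \ref{L:ex1:1} and \ref{L:ex1:3} is correct and matches the paper's (both rest on Borel--Cantelli). In part \ref{L:ex1:2}, your explicit computation of $\nu^X(\{n\},\cdot)$ and the three-regime estimate giving $(x^2\wedge|x|)*\nu^X_{\infty-}<\infty \Longleftrightarrow \sum_n|x_n|<\infty$ is correct and is a genuinely different, more self-contained route than the paper's, which instead obtains this equivalence by passing through ``special semimartingale on $[0,\infty]$'' and Proposition~II.2.29 of \citet{JacodS}. Your derivations $\sum_n|x_n|<\infty\Rightarrow$ uniformly integrable martingale $\Rightarrow$ semimartingale on $[0,\infty]$, and uniformly integrable $\Rightarrow (x^2\wedge|x|)*\nu^X_{\infty-}<\infty$ via Lemma~\ref{L:ELI}\ref{L:ELI:6} and Theorem~\ref{T:conv}, are also fine.

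The gap is in the very last step, where you must show that the semimartingale property on $[0,\infty]$ implies the other conditions; without this the cycle of equivalences does not close. You invoke the equivalence \ref{C:conv001:e} $\Longleftrightarrow$ \ref{C:conv001:d} of Corollary~\ref{C:conv001}, but condition \ref{C:conv001:e} is the \emph{conjunction} of the semimartingale property of $\overline X$ on $[0,\infty]$ \emph{and} the extended local integrability of $(\Delta X)^-\wedge X^-$; the paper shows explicitly (Examples~\ref{E:P1} and~\ref{ex:semimartingale}) that this integrability hypothesis is not redundant, and in the present setting establishing it is essentially equivalent to what you are trying to prove, so the appeal is circular. The paper closes this step with a direct argument: assuming $\sum_n|x_n|=\infty$, integrate the bounded predictable process $H=\sum_{n}(\oo_{x_n>0}-\oo_{x_n<0})\oo_{\lc n\rc}$ against $X$; if $X$ were a semimartingale on $[0,\infty]$ then $(H\cdot X)_{\infty-}$ would be finite, yet by Borel--Cantelli $H\cdot X$ eventually has increments $|x_n|$ and therefore diverges like $\sum_n|x_n|$. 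You need this (or some equivalent bespoke argument) to rule out that $X$ is a semimartingale on $[0,\infty]$ when $\sum_n|x_n|=\infty$.
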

\begin{proof}
The statements in \ref{L:ex1:1} and \ref{L:ex1:3} follow from the Borel-Cantelli lemma. For \ref{L:ex1:2}, note that $|X_t| \le \sum_{n\in\N} |x_n|  (1 + \Theta_n/p_n)$ for all $t\ge0$. Since
\[
\E\left[\sum_{n = 1}^\infty |x_n|  \left(1 + \frac{\Theta_n}{p_n}\right)\right]=2\sum_{n = 1}^\infty |x_n|,
\]
the condition $\sum_{n = 1}^\infty |x_n| < \infty$ implies that $X$ is a uniformly integrable martingale, which implies that~$X$ is a special semimartingale on $[0,\infty]$, or equivalently that $(x^2\wedge|x|)*\nu^X_{\infty-}<\infty$ (see Proposition~II.2.29 in~\cite{JacodS}), which implies that $X$ is a semimartingale on $[0,\infty]$. It remains to show that this implies $\sum_{n = 1}^\infty |x_n| < \infty$. We prove the contrapositive, and assume $\sum_{n = 1}^\infty |x_n| = \infty$. Consider the bounded predictable process $H = \sum_{n=1}^\infty (\oo_{x_n>0} - \oo_{x_n<0}) \oo_{\lc n\rc}$. If $X$ were a semimartingale on $[0,\infty]$, then $(H\cdot X)_{\infty-}$ would be well-defined and finite. However, by Borel-Cantelli, $H\cdot X$ has the same asymptotic behavior as $\sum_{n=1}^\infty |x_n|$ and thus diverges. Hence $X$ is not a semimartingale on~$[0,\infty]$.
\end{proof}

Martingales of the above type can be used to illustrate that much of Theorem~\ref{T:conv} and its corollaries fails if one drops extended local integrability of $(\Delta X)^-\wedge X^-$. We now list several such counterexamples.

\begin{example} \label{E:P1}
We use the notation of this subsection.
\begin{enumerate}
\item \label{E:P1:1} Let $x_n = (-1)^n/\sqrt{n}$ for all $n \in \N$. Then
\[
\P\Big(\lim_{t \to \infty} X_t \text{ exists in $\R$}\Big)  = \P\left([X,X]_{\infty-} = \infty\right) =  \P\left(x^2 \oo_{|x|<1}* \nu^X_{\infty-} = \infty\right) = 1.
\]
Thus the implications \ref{T:conv:a} $\Longrightarrow$ \ref{T:conv:c} and \ref{T:conv:a} $\Longrightarrow$ \ref{T:conv:f} in Theorem~\ref{T:conv} fail without the integrability condition on $(\Delta X)^-\wedge X^-$. Furthermore, by setting $x_1=0$ but leaving  $x_n$ for all $n \geq 2$ unchanged, and ensuring that $p_n\ne x_n/(1+x_n)$ for all $n \in \N$, we have $\Delta X\ne-1$. Thus, $\Ecal(X)_t=\prod_{n=1}^{[t]} (1+\Delta X_n)$ is nonzero for all $t$. Since $\Delta X_n=x_n$ for all sufficiently large $n \in \N$, $\Ecal(X)$ will eventually be of constant sign. Moreover, for any $n_0\in\N$ we have
\[
\lim_{m\to\infty}\sum_{n=n_0}^m \log(1+x_n)\le \lim_{m\to\infty}\sum_{n=n_0}^m \left(x_n - \frac{x_n^2}{4}\right)=-\infty.
\]
It follows that $\P( \lim_{t\to\infty}\Ecal(X)_t=0) = 1$, showing that the implication \ref{T:conv:a} $\Longrightarrow$ \ref{T:conv:g} in Theorem~\ref{T:conv} fails without the integrability condition on $(\Delta X)^-\wedge X^-$.

\item  Part~\ref{E:P1:1} illustrates that the implications \ref{T:conv:a'} $\Longrightarrow$ \ref{T:conv:c},  \ref{T:conv:a'} $\Longrightarrow$ \ref{T:conv:f}, and \ref{T:conv:a'} $\Longrightarrow$ \ref{T:conv:g} in Theorem~\ref{T:conv} fail without the integrability condition on $(\Delta X)^-\wedge X^-$.
We now let $x_n = 1$ for all $n \in \N$. Then $\P(\lim_{t \to \infty} X_t = \infty)  = 1$, which illustrates that also \ref{T:conv:a'} $\Longrightarrow$ \ref{T:conv:a} in that theorem fails without integrability condition.

\item We now fix a sequence $(x_n)_{n \in \N}$ such that $|x_n| = 1/n$  but $g: m \mapsto \sum_{n = 1}^m x_n$ oscillates with $\liminf_{m \to \infty} g(m) = -\infty$ and  $\limsup_{m \to \infty} g(m) = \infty$.  This setup illustrates that \ref{T:conv:f} $\Longrightarrow$ \ref{T:conv:a} and \ref{T:conv:f} $\Longrightarrow$ \ref{T:conv:a'}  in Theorem~\ref{T:conv}  fail without the integrability condition on $(\Delta X)^- \wedge X^-$.  Moreover, by Lemma~\ref{L:ex1}\ref{L:ex1:2} the implication \ref{T:conv:f} $\Longrightarrow$ \ref{T:conv:c} fails without the additional integrability condition. The same is true for the implication \ref{T:conv:f} $\Longrightarrow$ \ref{T:conv:g}, since $\log \mathcal{E}(X) \leq X$.

\item Let $x_n=e^{(-1)^n/\sqrt{n}}-1$ and suppose $p_n\ne x_n/(1+x_n)$ for all $n \in \N$ to ensure $\Delta X\ne-1$. Then
\[
\P\Big( \lim_{t\to\infty}\Ecal(X)_t \text{ exists in }\R\setminus\{0\}\Big) = \P\Big(\lim_{t \to \infty} X_t = \infty\Big) = \P\Big([X,X]_{\infty-}= \infty\Big) = 1.
\]
Indeed, $\lim_{m\to\infty}\sum_{n=1}^m \log(1+x_n)=\lim_{m\to\infty}\sum_{n=1}^m (-1)^n/\sqrt{n}$ exists in $\R$, implying that $\Ecal(X)$ converges to a nonzero limit. Moreover,
\[
\lim_{m\to\infty}\sum_{n=1}^m x_n \ge \lim_{m\to\infty}\sum_{n=1}^m \left(\frac{(-1)^n}{\sqrt{n}}+\frac{1}{4n}\right)=\infty,
\]
whence $X$ diverges. Since $\sum_{n=1}^\infty x_n^2 \ge \sum_{n=1}^\infty 1/(4n)=\infty$, we obtain that $[X,X]$ also diverges. Thus the implications \ref{T:conv:g} $\Longrightarrow$ \ref{T:conv:a} and \ref{T:conv:g} $\Longrightarrow$ \ref{T:conv:f} in Theorem~\ref{T:conv} fail without the integrability condition on $(\Delta X)^-\wedge X^-$. So does the implication \ref{T:conv:g} $\Longrightarrow$ \ref{T:conv:c} due to Lemma~\ref{L:ex1}\ref{L:ex1:2}. Finally, note that the implication \ref{T:conv:g} $\Longrightarrow$ \ref{T:conv:a'} holds independently of any integrability conditions since $\log \mathcal{E}(X) \leq X$.

\item Let $x_n=-1/n$ for all $n\in\N$. Then $[X,X]_{\infty-}<\infty$ and $(\Delta X)^-$ is extended locally integrable, but $\lim_{t\to\infty}X_t=-\infty$. This shows that the condition involving limit superior is needed in Theorem~\ref{T:conv}\ref{T:conv:f}, even if $X$ is a martingale. We further note that if $X$ is Brownian motion, then $\limsup_{t\to\infty}X_t>-\infty$ and $(\Delta X)^-=0$, but $[X,X]_{\infty-}=\infty$. Thus some condition involving the quadratic variation is also needed in Theorem~\ref{T:conv}\ref{T:conv:f}.

\item Note that choosing $x_n= (-1)^n/n$ for each $n\in \N$ yields a locally bounded martingale~$X$ with $[X,X]_\infty<\infty$, $X_\infty = \lim_{t \to \infty} X_t$ exists, but $X$ is not a semimartingale on $[0,\infty]$.   This contradicts statements in the literature which assert that a semimartingale that has a limit is a semimartingale on the extended interval.  This example also illustrates that the implications \ref{T:conv1:a} $\Longrightarrow$ \ref{C:conv001:d} and \ref{T:conv1:a} $\Longrightarrow$ \ref{C:conv001:e}
in Corollary~\ref{C:conv001} fail without additional integrability condition. For the sake of completeness, Example~\ref{ex:semimartingale} illustrates that the integrability condition in Corollary~\ref{C:conv001}\ref{C:conv001:e} is not redundant either. \qed
\end{enumerate}
\end{example}

\begin{remark}
Many other types of behavior can be generated within the setup of this subsection. For example, by choosing the sequence $(x_n)_{n \in \N}$ appropriately we can obtain a martingale $X$ that converges nowhere, but satisfies $\P(\sup_{t\geq 0} |X_t| < \infty)=1$. We can also choose $(x_n)_{n \in \N}$ so that, additionally, either $\P([X,X]_{\infty-} = \infty)=1$ or $\P([X,X]_{\infty-} <\infty)=1$.
\qed
\end{remark}

\begin{example} \label{ex:ui}
The uniform integrability assumption in Corollary~\ref{C:convYX2} cannot be weakened to $L^1$--boundedness. To see this, within the setup of this subsection, let $x_n=-1/2$. Then $\Delta X\ge-1/2$. Moreover,  the sequence $(p_n)_{n\in\N}$ can be chosen so that
\begin{align}  \label{ex:ui:eq1}
	\sup_{\sigma\in\Tcal} \E\left[ e^{c\log(1+x) * (\mu^X-\nu^X)_\sigma}  \right] < \infty
\end{align}
for each $c<1$,
while, clearly, $\P(\lim_{t \to \infty} X_t = -\infty)=1$. This shows that the implication \ref{T:convYX2:b} $\Longrightarrow$ \ref{T:convYX2:a} in Corollary~\ref{C:convYX2}, with $c<1$, fails without the tail condition on $\nu^X$.

To obtain~\eqref{ex:ui:eq1}, note that $Y=\log(1+x) * (\mu^X-\nu^X)$ is a martingale, so that $e^{cY}$ is a submartingale, whence $\E[e^{cY_\sigma}]$ is nondecreasing in~$\sigma$. Since the jumps of $X$ are independent, this yields
\begin{align*}
	\sup_{\sigma\in\Tcal}  \E\left[ e^{c \log(1+x) * (\mu^X-\nu^X)_\sigma}  \right]
\le  \prod_{n=1}^\infty \E\left[ (1+\Delta X_n)^c \right] e^{-c\,\E[\log(1+\Delta X_n)]} =: e^{\kappa_n}.
\end{align*}
We have $\kappa_n\ge0$ by Jensen's inequality, and a direct calculation yields
\begin{align*}
\kappa_n =& \log\E\left[ (1+\Delta X_n)^{c} \right] - c\,\E[\log(1+\Delta X_n)] 
\le \log\left( 2 p_n^{1-c} +1\right) - c\,p_n \log(1+p_n^{-1})
\end{align*}
for all $c <1$.
Let us now fix a sequence $(p_n)_{n \in \N}$ such that the following inequalities hold for all $n \in \N$:
\begin{align*}
	p_n \log(1+p_n^{-1})  \leq \frac{1}{n^3} \qquad \text{and} \qquad
	p_n \leq \frac{1}{2^n}\left( e^{n^{-2}}  -1\right)^n. 
\end{align*}
This is always possible. Such a sequence satisfies $\sum_{n\in\N}p_n<\infty$ and results in $\kappa_n \leq 2/n^2$ for all $n \geq -c \vee (1/(1-c))$, whence $\sum_{n\in\N}\kappa_n<\infty$. This yields the assertion.
\qed
\end{example}

\subsection{Quasi-left continuous one-jump martingales}  \label{A:SS:one}

We now present examples based on a martingale $X$ which, unlike in Subsection~\ref{A:SS:lack}, has one single jump that occurs at a totally inaccessible stopping time. In particular, the findings of Subsection~\ref{A:SS:lack} do not rely on the fact that the jump times there are predictable.

Let $\lambda, \gamma:\mathbb R_+\to\mathbb R_+$ be two continuous nonnegative functions. Let $\Theta$ be a standard exponential random variable and define $\rho = \inf\{t\ge 0: \int_0^t\lambda(s) ds \ge \Theta\}$. Let $\F$ be the filtration generated by the indicator process $\oo_{\lc\rho,\infty\lc}$, and define a process $X$ by
\[
X_t = \gamma(\rho)\1{\rho\le t} - \int_0^t \gamma(s)\lambda(s)\1{s<\rho}\dd s.
\]
Note that $X$ is the integral of $\gamma$ with respect to $\1{\rho\le t}-\int_0^{t\wedge\rho}\lambda_s\dd s$ and is  a martingale. Furthermore, $\rho$ is totally inaccessible. This construction is sometimes called the {\em Cox construction}. Furthermore, the jump measure $\mu^X$ and corresponding compensator $\nu^X$ satisfy
\[
F*\mu^X = F(\rho,\gamma(\rho))\oo_{\lc \rho, \infty\lc}, \qquad
F*\nu^X_t = \int_0^{t\wedge\rho}F(s,\gamma(s))\lambda(s) \dd s
\]
for all $t \geq 0$,
where $F$ is any nonnegative predictable function. We will study such martingales when $\lambda$ and $\gamma$ posses certain integrability properties, such as the following:
\begin{align}
&\int_0^\infty\lambda(s)\dd s <\infty;   \label{A:eq:int1}\\
&\int_0^\infty\gamma(s)\lambda(s)\dd s =\infty;  \label{A:eq:int2}\\
&\int_0^\infty (1+\gamma(s))^c\lambda(s) \dd s <\infty \quad \text{ for all }c<1.  \label{A:eq:int3}
\end{align}
For instance, $\lambda(s) = 1/(1+s)^2$ and $\gamma(s)=s$ satisfy all three properties.

\begin{example} \label{E:2}
The limit superior condition in Theorem~\ref{T:conv} is essential, even if $X$ is a local martingale. Indeed, with the notation of this subsection, let $\lambda$ and $\gamma$ satisfy \eqref{A:eq:int1} and \eqref{A:eq:int2}. Then
\begin{align*}
&\P\left([X,X]_{\infty-} +  (x^2\wedge 1) * \nu^X_{\infty-}  < \infty\right) = \P\Big(\sup_{t\ge 0} X_t < \infty\Big)=1; \\
&\P \Big(\limsup_{t\to \infty} X_t = - \infty\Big) > 0.
\end{align*}
This shows that finite quadratic variation does not prevent a martingale from diverging; in fact, $X$ satisfies $\{[X,X]_{\infty-} =0\} = \{\limsup_{t\to \infty}X_t = - \infty\}$. The example also shows that one cannot replace $(x^2\wedge |x|)*\nu^X_{\infty-}$ by $(x^2\wedge 1) * \nu^X_{\infty-}$ in \eqref{T:conv2:2}. Finally, it illustrates in the quasi-left continuous case that diverging local martingales need not oscillate, in contrast to continuous local martingales.

To prove the above claims, first observe that $[X,X]_{\infty-} = \gamma(\rho)^2 \1{\rho<\infty} < \infty$ and $\sup_{t\ge 0} X_t\le\gamma(\rho)\1{\rho<\infty}<\infty$ almost surely. Next, we get $\P(\rho=\infty) = \exp({-\int_0^\infty\lambda(s) \dd s})>0$ in view of~\eqref{A:eq:int1}. We conclude by observing that $\lim_{t \to \infty} X_t = - \lim_{t \to \infty}  \int_0^t \gamma(s)\lambda(s) \dd s = -\infty$ on the event $\{\rho=\infty\}$ due to~\eqref{A:eq:int2}.
\qed
\end{example}

\begin{example}
Example~\ref{E:2} can be refined to yield a martingale with a single positive jumps, that diverges without oscillating, but has infinite quadratic variation. To this end, extend the probability space to include a Brownian motion $B$ that is independent of~$\Theta$, and suppose $\F$ is generated by $(\oo_{\lc\rho,\infty\lc},B)$. The construction of $X$ is unaffected by this. In addition to \eqref{A:eq:int1} and \eqref{A:eq:int2}, let $\lambda$ and $\gamma$ satisfy
\begin{equation} \label{eq:E:3:1}
\lim_{t\to\infty} \frac{\int_0^t \gamma(s)\lambda(s) \dd s }{\sqrt{2 t \log \log t}} = \infty.
\end{equation}
For instance, take $\lambda(s)=1/(1+s)^2$ and $\gamma(s)=1/\lambda(s)$. Then the martingale $X' = B + X$ satisfies
\begin{equation} \label{eq:E:3:2}
\P\left([X',X']_{\infty-} = \infty\right)= 1 \qquad\text{and}\qquad \P\Big(\sup_{t\ge 0} X'_t < \infty\Big) > 0,
\end{equation}
so that, in particular, the inclusion $\{ [X',X']_\infty=\infty\} \subset \{\sup_{t\ge 0} X'_t=\infty\}$ does not hold in general. 

To prove~\eqref{eq:E:3:2}, first note that $[X',X']_{\infty-} \geq [B,B]_{\infty-} = \infty$. Next, \eqref{eq:E:3:1} and the law of the iterated logarithm yield, on the event $\{\rho=\infty\}$,
\[
\limsup_{t \to \infty} X'_t = \limsup_{t \to \infty} \Big(B_t - \int_0^t \gamma(s)\lambda(s) \dd s\Big)  \leq \limsup_{t \to \infty} \Big(2 \sqrt{2 t \log \log t}  - \int_0^t \gamma(s)\lambda(s) \dd s\Big) = -\infty.
\]
Since $\P(\rho=\infty)>0$, this implies $\P(\sup_{t\geq 0} X'_t < \infty)>0$.
\qed
\end{example}

\begin{example} \label{ex:semimartingale}
The semimartingale property does not imply that $X^- \wedge (\Delta X)^-$ is extended local integrability. With the notation of this subsection, consider the process $\widehat{X} = -\gamma(\rho) \oo_{\lc \rho, \infty\lc}$, which is clearly a semimartingale on $[0,\infty]$. On $[0,\infty)$, it has the special decomposition $\widehat{X} = \widehat{M} - \widehat{A}$, where $\widehat{M} = -X$ and $\widehat{A} = \int_0 \gamma(s) \lambda(s) \oo_{\{s < \rho\}} \dd s$. We have $\P(A_\infty = \infty) > 0$, and thus, by Corollary~\ref{C:conv2} we see that the integrability condition in Corollary~\ref{C:conv001}\ref{C:conv001:e} is non-redundant. This example also illustrates that~\eqref{eq:XMA} does not hold in general.
\qed
\end{example}

\begin{example}\label{ex:6.8}
Also in the case where $X$ is quasi-left continuous, the uniform integrability assumption in Corollary~\ref{C:convYX2} cannot be weakened to $L^1$--boundedness. We again put ourselves in the setup of this subsection and suppose $\lambda$ and $\gamma$ satisfy \eqref{A:eq:int1}--\eqref{A:eq:int3}. Then, while $X$ diverges with positive probability, it nonetheless satisfies
\begin{align}  \label{A:eq:prop6.1}
	\sup_{\sigma\in\Tcal} \E\left[ e^{c\log(1+x) * (\mu^X-\nu^X)_\sigma }\right] < \infty
\end{align}
for all $c<1$.
Indeed, if $c \leq 0$, then $$e^{c\log(1+x) * (\mu^X-\nu^X)_\sigma} \leq e^{|c| \log(1+x) * \nu^X_\rho} \leq e^{|c| \int_0^\infty \log(1+s)  \lambda(s) \dd s} < \infty $$ for all $\sigma \in \Tcal$. 
If $c \in (0,1)$, the left-hand side of~\eqref{A:eq:prop6.1} is bounded above by
\begin{align*}
\sup_{\sigma \in \mathcal T} \E\left[e^{c\log(1+x)*\mu^X_\sigma}\right]
&\le 1 + \sup_{\sigma \in \mathcal T} \E\left[(1+\gamma(\rho))^c\,\1{\rho\le\sigma}\right] 
\le 1 + \E\left[(1+x)^c*\mu^X_{\infty}\right] \\
&=1 + \E\left[(1+x)^c*\nu^X_{\infty}\right] \le 1 +  \int_0^\infty(1+\gamma(s))^c\lambda(s)ds < \infty,
\end{align*}
due to \eqref{A:eq:int3}. 
\qed
\end{example}

\subsection{Counterexamples for Novikov-Kazamaki conditions} \label{SS:counter NK}

We now apply the constructions in the previous subsections to construct two examples that illustrate that the uniform integrability assumption in Corollary~\ref{C:NK1}\ref{C:NK1:5} cannot be weakened to $L^1$--boundedness. In the first example we consider predictable---in fact, deterministic---jump times, while in the second example there is one single jump that occurs at a totally inaccessible stopping time.

\begin{example}
Let $(\xi_n)_{n\in\N}$ be a sequence of independent random variables, defined on some probability space $(\Omega,\Fcal,\P)$, such that
\begin{align*}
\P\left( \xi_n = 1\right) &= \frac{1-p_n}{2}; \\
\P\left( \xi_n = -\frac{1-p_n}{1+p_n}\right) &= \frac{1+p_n}{2},
\end{align*}
where $(p_n)_{n\in\N}$ is the sequence from Example~\ref{ex:ui}. Let $M$ be given by $M_t=\sum_{n =1}^{[t]} \xi_n$ for all $t \in \N$, which is a martingale with respect to its natural filtration. Fix $a> 0$. We claim the following: The local martingale $Z=\Ecal(M)$ satisfies $\sup_{\sigma\in\Tcal}\E_\P[e^{B_\sigma^a}]<\infty$ for all $a >0$, but nonetheless fails to be a uniformly integrable martingale.

Let $\Q$ be the F\"ollmer associated with $Z$; see Theorem~\ref{T numeraire}. The process $N$ in~\eqref{eq:N} is then a pure jump martingale under~$\Q$, constant between integer times, with $\Delta N_n=-1/2$ if $\xi_n=1$, and $\Delta N_n=(1-p_n)/(2p_n)$ otherwise for each $n \in \N$. In view of Example~\ref{ex:ui}, the process $N$ explodes under~$\Q$. Hence $Z$ is not a uniformly integrable martingale under~$\P$. However, Lemma~\ref{L:ABdecomp} and~\eqref{ex:ui:eq1} in Example~\ref{ex:ui} yield
\[
\sup_{\sigma\in\Tcal}\E_\P\left[e^{B_\sigma^a}\right] = \sup_{\sigma\in\Tcal}\E_\Q\left[e^{(1-a) \log(1+x) * (\mu^N - \widehat\nu^N)_\sigma}\1{\sigma<\tau_\infty}\right] < \infty,
\]
where $\widehat\nu^N=\nu^N/(1+y)$ is the compensator of $\mu^N$ under~$\Q$.
\qed
\end{example}

\begin{example}
Let $N=X$ be the martingale constructed in Example~\ref{ex:6.8} but now on a probability space $(\Omega,\Fcal,\Q)$. Next, define the process $M$ in accordance with Theorem~\ref{T:reciprocal} as
\[
M = -N + \frac{y^2}{1+y}*\mu^N = -\frac{\gamma(\rho)}{1+\gamma(\rho)}\1{\rho\le t} + \int_0^{t\wedge\rho}\gamma(s)\lambda(s)ds.
\]
Then $M$ is a local martingale under the F\"ollmer measure $\P$ associated with~$\Ecal(N)$. Further, $Z=\Ecal(M)$ cannot be a uniformly integrable martingale under~$\P$, since~$N$ explodes with positive probability under~$\Q$. Nonetheless, thanks to~\eqref{A:eq:prop6.1} we have
\begin{align*}
\sup_{\sigma \in \mathcal T} \E_\P\left[e^{B_\sigma^a}\1{\sigma<\tau_0}\right] 
&=\sup_{\sigma \in \mathcal T}\E_\Q\left[e^{(1-a)\log(1+y)*(\mu^N-\widehat\nu^N)_\sigma}\right]< \infty,
\end{align*}
where again $\widehat\nu^N=\nu^N/(1+y)$ is the compensator of $\mu^N$ under~$\Q$. We conclude that \eqref{eq:4.7} is not enough in general to guarantee that $\Ecal(M)$ be a uniformly integrable martingale.
\qed
\end{example}

\appendix

\section{Stochastic exponentials and logarithms}  \label{A:SE}

In this appendix we discuss stochastic exponentials of semimartingales on stochastic intervals.

\begin{definition}[Maximality]\label{D:maximal}
Let $\tau$ be a foretellable time, and let $X$ be a semimartingale on $\lc0,\tau\lc$. We say that $\tau$ is {\em $X$--maximal} if the inclusion $\{\lim_{t\to\tau}X_t \text{ exists in }\R\}\subset\{\tau=\infty\}$ holds almost surely. \qed
\end{definition}

\begin{definition}[Stochastic exponential]\label{D:stochExp}
Let $\tau$ be a foretellable time, and let $X$ be a semimartingale on $\lc 0,\tau\lc$ such that $\tau$ is $X$--maximal. The \emph{stochastic exponential of $X$} is the process $\mathcal E(X)$ defined by
\[
\mathcal E(X )_t = \exp\left(X_t - \frac{1}{2}[X^c,X^c]_t \right) \prod_{0<s\le t} (1+\Delta X_s)e^{-\Delta X_s}
\]
for all $t< \tau$, and by $\mathcal E(X)_t=0$ for all $t\ge\tau$. \qed
\end{definition}

If $(\tau_n)_{n\in\N}$ is an announcing sequence for $\tau$, then $\Ecal(X)$ of Definition~\ref{D:stochExp} coincides on $\lc0,\tau_n\lc$ with the usual (Dol\'eans-Dade) stochastic exponential of $X^{\tau_n}$. In particular, the two notions coincide when $\tau=\infty$. Many properties of stochastic exponentials thus remain valid. For instance, if $\Delta X_t=-1$ for some $t\in[0,\tau)$ then $\Ecal(X)$ jumps to zero at time $t$ and stays there. If $\Delta X>-1$ then $\Ecal(X)$ is strictly positive on~$\lc0,\tau\lc$. Also, on $\lc0,\tau\lc$, $\mathcal E(X)$ is the unique solution to the equation
\begin{equation*}
Z = e^{X_0} + Z_- \cdot X \quad \text{on} \quad \lc 0,\tau\lc;
\end{equation*}
 see  \citet{Doleans_1976}. 
It follows that $\mathcal E(X)$ is a local martingale on $\lc 0,\tau\lc$ if and only if $X$ is. We record the more succinct expression
\begin{align*}
\mathcal E( X ) = \oo_{\lc0,\tau_0\lc} \exp\left(X - \frac{1}{2}[X^c,X^c] - (x - \log(1+x)) * \mu^X\right),
\end{align*}
where $\tau_0=\tau\wedge\inf\{t \geq 0:\Delta X_t=-1\}$. If $X$ is a local supermartingale on $\lc0,\tau\lc$ with $\Delta X \geq -1$, Theorem~\ref{T:conv} in conjunction with the $X$--maximality of $\tau$ shows that $\lim_{t\to\tau}\Ecal(X)_t=0$ almost surely on $\{\tau<\infty\}$.

We now consider the stochastic logarithm of a nonnegative semimartingale $Z$ that stays at zero after reaching it. In preparation for this, recall that for a stopping time $\rho$ and a set $A\in\mathcal F$, the \emph{restriction of $\rho$ to $A$} is given by
\[
\rho(A) = \rho \oo_A + \infty \oo_{A^c}.
\]
Here $\rho(A)$ is a stopping time if and only if $A\in\mathcal F_\rho$. Define the following stopping times associated to a nonnegative semimartingale $Z$ (recall our convention that $Z_{0-}=Z_0$):
\begin{align}
\nonumber
\tau_0 &= \inf\{ t\ge 0: Z_t = 0\};\\
\label{eq:tauC}
\tau_c &= \tau_0(A_C), \quad\quad A_C =\{Z_{\tau_0-}=0\};\\
\tau_J &= \tau_0(A_J), \quad\quad A_J = \{Z_{\tau_0-}>0\}. \nonumber
\end{align}
These stopping times correspond to the two ways in which $Z$ can reach zero: either continuously or by a jump. We have the following property of $\tau_c$.

\begin{lemma}\label{L:tauCpred}
Fix some nonnegative semimartingale $Z$.
The stopping time $\tau_c$ of \eqref{eq:tauC} is foretellable.
\end{lemma}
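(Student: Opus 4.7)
The plan is to construct an explicit announcing sequence for $\tau_c$. The natural candidates are the hitting times $\rho_n = \inf\{t\ge 0 : Z_t \le 1/n\}$, each a stopping time because $\F$ is right-continuous and $Z$ is c\`adl\`ag adapted, and these are nondecreasing in $n$.

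The first step is to verify that on the event $A_C$ they do the job directly: the condition $Z_{\tau_0-}=0$ forces the existence, for each $n$, of a time $t<\tau_0$ with $Z_t\le 1/n$, whence $\rho_n<\tau_0=\tau_c$ on $A_C\cap\{\tau_c>0\}$; and $\rho_n\nearrow\tau_0$ on $A_C$, because any subsequential limit $\rho^{\ast}<\tau_0$ would force $Z_{\rho^{\ast}}=0$, contradicting the definition of $\tau_0$.

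The main obstacle is the behavior on $A_C^c$, where $\tau_c=\infty$ but the $\rho_n$ may remain bounded (for instance when $Z$ jumps directly to zero at a finite $\tau_0$). I would exploit that $A_C=\{Z_{\tau_0-}=0\}\in\Fcal_{\tau_0-}$, which allows me to modify the sequence on an $\Fcal_{\rho_n}$-approximation of $A_C$. Concretely, I would take $E_n=\{\rho_n<\tau_0\}\cap\{Z_{\rho_n-}\le 2/n\}\in\Fcal_{\rho_n}$, set $\widetilde\rho_n=\rho_n\,\oo_{E_n}+n\,\oo_{E_n^c}$, and pass to the running maximum $\tau_n=\bigvee_{k\le n}\widetilde\rho_k$ to enforce monotonicity. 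On $A_C$ the events $E_n$ hold eventually by the previous paragraph, so $\tau_n=\rho_n$ for large $n$ and $\tau_n\nearrow\tau_c$; on $A_C^c$, whenever $E_n$ fails one has $\tau_n\ge n$, which should force $\tau_n\to\infty=\tau_c$.

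The hardest step will be ruling out the oscillatory pathology where $E_n$ holds on $A_C^c$ for infinitely many $n$---which can occur if $Z$ has times $s$ with $Z_{s-}=0$ but $Z_s>0$ interspersed before $\tau_0$. To handle this I would either sharpen the threshold $\{Z_{\rho_n-}\le 2/n\}$ to a persistence-type condition ensuring $Z$ stays small throughout a left-neighborhood of $\rho_n$, or, perhaps more cleanly, bypass the explicit construction and appeal to the Meyer decomposition of $\tau_0$ into predictable and totally inaccessible parts: the restriction of $\tau_0$ to the maximal $\Fcal_{\tau_0-}$-measurable set on which it admits an announcing sequence is exactly $\tau_c$, so that $\tau_c$ is predictable and hence (as the paper notes) foretellable.
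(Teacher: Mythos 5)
Your overall strategy---announcing $\tau_c$ by level-crossing times of $Z$, restricted to an event that detects whether zero is approached continuously---is the same as the paper's, but your detector is wrong and the construction fails on unproblematic paths. The clause $Z_{\rho_n-}\le 2/n$ in $E_n$ is not implied by $Z_{\tau_0-}=0$. Take the deterministic nonincreasing path $Z_t=4^{-k}$ for $t\in[1-2^{-k},1-2^{-k-1})$, $k\ge0$, and $Z_t=0$ for $t\ge1$ (a supermartingale, so squarely in the intended scope): here $Z_{\tau_0-}=0$ and $\tau_c=\tau_0=1$, yet for $n=4^k$ one has $\rho_n=1-2^{-k}$ and $Z_{\rho_n-}=4/n>2/n$, so $E_n$ fails for infinitely many $n$, your $\widetilde\rho_n$ equals $n$ infinitely often, and the running maximum $\tau_n$ tends to $\infty\ne\tau_c$ while eventually exceeding $\tau_c$---both requirements of an announcing sequence are violated. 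A process with $Z_{\tau_0-}=0$ may cross each level $1/n$ by a jump from far above $2/n$; the condition on $Z_{\rho_n-}$ must simply be dropped. The correct detector is the value of $Z$ \emph{at} the crossing time, which is what the paper uses: with $\sigma_n'=\inf\{t:Z_t<1/n\}\wedge n$ and $A_n=\{Z_{\sigma_n'}>0\}$, one gets $A_C\subset A_n$ for every $n$ because $Z_t>0$ for every $t<\tau_0$ by definition of $\tau_0$, while on $A_C^c$ the event $A_n$ fails for all large $n$ and the cap at $n$ sends $\sigma_n\to\infty=\tau_c$. Note also that capping \emph{before} restricting is what makes $\sigma_n$ a stopping time; your $\widetilde\rho_n=\rho_n\oo_{E_n}+n\oo_{E_n^c}$ need not be one, since $E_n\in\Fcal_{\rho_n}$ does not give $E_n\in\Fcal_n$ on $\{\rho_n>n\}$, and the hitting time of the closed set $[0,1/n]$ is itself delicate without the usual conditions, which the paper deliberately avoids.

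Your convergence argument on $A_C$ is also flawed: if $\rho_n$ increases strictly to $\rho^*<\tau_0$, you obtain $Z_{\rho^*-}=0$, not $Z_{\rho^*}=0$, and for a general nonnegative semimartingale this is no contradiction. The pathology you flag ($Z_{s-}=0<Z_s$ before $\tau_0$) is genuine; the argument implicitly relies on its absence, which holds for the nonnegative local martingales the lemma is applied to (such a process is absorbed at zero from the first time $Z$ or $Z_-$ vanishes, so $Z_{s-}>0$ for all $s<\tau_0$), but without it $\tau_c$ need not be foretellable at all. Neither of your proposed remedies closes this gap: the ``persistence-type condition'' is never formulated, and the appeal to the accessible/totally inaccessible decomposition of $\tau_0$ is circular---the assertion that the accessible part of $\tau_0$ is exactly its restriction to $\{Z_{\tau_0-}=0\}$ is essentially the content of the lemma, and accessibility is in any case weaker than foretellability.
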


\begin{proof}
We must exhibit an announcing sequence for $\tau_c$, and claim that $(\sigma_n)_{n\in\mathbb N}$ is such a sequence, where
\begin{align*}
	\sigma_n = n\wedge\sigma'_n(A_n), \qquad \sigma'_n = \inf\left\{t\ge 0 : Z_t  < \frac{1}{n} \right\}\wedge n, \qquad A_n = \{Z_{\sigma_n'}>0\}.
\end{align*}
To prove this, we first observe that $\sigma_n = n < \infty = \tau_c$ on $A_n^c$ for all $n \in \N$. Moreover, we have $\sigma_n \leq \sigma_n'< \tau_c$ on $A_n$ for all $n \in \N$, where we used that $Z_{\tau_c} = 0$ on the event $\{\tau_c < \infty\}$. We need to show that $\lim_{n \to \infty} \sigma_n = \tau_c$.  On the event $A_C$, see \eqref{eq:tauC}, we have $\tau_c = \tau_0 = \lim_{n \to \infty} \sigma_n' = \lim_{n \to \infty} \sigma_n$ since $A_C \subset A_n$ for all $n \in \N$. On the event $A_C^c = \bigcup_{n =1}^\infty A_n^c$, we have $\tau_c = \infty = \lim_{n \to \infty} n = \lim_{n \to \infty} \sigma_n$. Hence $(\sigma_n)_{n \in \N}$ is an announcing sequence of $\tau_c$, as claimed.
\end{proof}

If a nonnegative semimartingale $Z$  reaches zero continuously, the process $H = \frac{1}{Z_-}\1{Z_->0}$ explodes in finite time, and is therefore not left-continuous. In fact, it is not $Z$--integrable. However, if we view $Z$ as a semimartingale on the stochastic interval $\lc 0,\tau_c\lc$, then $H$ is $Z$--integrable in the sense of stochastic integration on stochastic intervals. Thus $H \cdot Z$ exists as a semimartingale on~$\lc0,\tau_c\lc$, which we will call the stochastic logarithm of $Z$.

\begin{definition}[Stochastic logarithm]\label{D:stochLog}
Let $Z$ be a nonnegative semimartingale with $Z=Z^{\tau_0}$.
The  semimartingale $\mathcal L(Z)$ on $\lc0,\tau_c\lc$ defined by
\begin{align*}
\mathcal L( Z ) = \frac{1}{Z_-}\1{Z_->0} \cdot Z \quad \text{on} \quad \lc0,\tau_c\lc
\end{align*}
is called the \emph{stochastic logarithm of $Z$}.\qed
\end{definition}

We now clarify the relationship of stochastic exponentials and logarithms in the local martingale case. To this end, let $\mathfrak{Z}$ be the set of all nonnegative local martingales $Z$ with $Z_0 = 1$. Any such process~$Z$ automatically satisfies $Z = Z^{\tau_0}$. Furthermore, let $\mathfrak L$ denote the set of all stochastic processes~$X$ satisfying the following conditions:
\begin{enumerate}
\item $X$ is a local martingale on $\lc0,\tau\lc$ for some foretellable, $X$--maximal time $\tau$.
\item $X_0=0$, $\Delta X\ge -1$ on $\lc0,\tau\lc$, and $X$ is constant after the first time $\Delta X=-1$.
\end{enumerate}

The next theorem extends the classical correspondence between strictly positive local martingales and local martingales with jumps strictly greater than~$-1$. The reader is referred to Proposition~I.5 in \citet{Lepingle_Memin_Sur} and Appendix~A of \citet{K_balance} for related results.

\begin{theorem}[Relationship of stochastic exponential and logarithm] \label{T:SE}
The stochastic exponential $\mathcal{E}$ is a bijection from $\mathfrak{L}$ to $\mathfrak{Z}$, and its inverse is the stochastic logarithm $\mathcal L$.  Consider $Z = \mathcal{E}(X)$ for some $Z\in\mathfrak Z$ and $X \in \mathfrak{L}$. The identity $\tau = \tau_c$ holds almost surely, where $\tau$ is the foretellable $X$--maximal time corresponding to $X$, and $\tau_c$ is given by~\eqref{eq:tauC}.
\end{theorem}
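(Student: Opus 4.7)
The plan is to establish that the two maps $\Ecal$ and $\Lcal$ are well defined, check that they are mutually inverse, and finally verify the identity $\tau=\tau_c$. I proceed in three stages.

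First, I would check that the maps land in the claimed spaces. For $X\in\mathfrak{L}$ with $X$--maximal foretellable time $\tau$ and announcing sequence $(\tau_n)$, each $\Ecal(X^{\tau_n})$ is a nonnegative local martingale on $[0,\infty)$, so $\Ecal(X)$ is a nonnegative local martingale on $\lc0,\tau\lc$. The observation already recorded in the excerpt (Theorem~\ref{T:conv} together with $X$--maximality) gives $\Ecal(X)_t\to 0$ on $\{\tau<\infty\}$, so the extension by zero past $\tau$ is consistent with the left limit. Lemma~\ref{L:SMC}, which applies trivially since $\Ecal(X)^-\equiv 0$, then shows the extended process is a local martingale on $[0,\infty]$, hence $\Ecal(X)\in\mathfrak{Z}$. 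Conversely, for $Z\in\mathfrak{Z}$, the integrand $\tfrac{1}{Z_-}\oo_{Z_->0}$ is locally bounded on $\lc0,\tau_c\lc$ along the sequence from the proof of Lemma~\ref{L:tauCpred}, so $\Lcal(Z)$ is a local martingale on $\lc0,\tau_c\lc$ with $\Lcal(Z)_0=0$ and $\Delta\Lcal(Z)=\Delta Z/Z_-\in(-1,\infty)$ because $Z>0$ on $\lc0,\tau_c\lc$; in particular, the constancy condition past the first time $\Delta\Lcal(Z)=-1$ is vacuous.

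Second, I would verify the maximality of $\tau_c$ for $\Lcal(Z)$, the crux of the argument. Suppose for contradiction there exists $B\subset\{\tau_c<\infty\}$ of positive probability on which $X=\Lcal(Z)$ converges in $\R$ at $\tau_c$. Then $\sup_{t<\tau_c}|X_t|<\infty$ on $B$, and combined with the uniform bound $(\Delta X)^-\le 1$, the process $(\Delta X)^-\wedge X^-$ is extended locally integrable on $B$. The implication \ref{T:conv:a}$\Longrightarrow$\ref{T:conv:c} of Theorem~\ref{T:conv} (with $A=0$) yields $[X^c,X^c]_{\tau_c-}+(x^2\wedge|x|)*\nu^X_{\tau_c-}<\infty$ on $B$, and via Remark~\ref{R:4.3} also $(x^2\wedge|x|)*\mu^X_{\tau_c-}<\infty$. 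Consequently, for every $\varepsilon>0$ only finitely many jumps satisfy $|\Delta X|\ge\varepsilon$ on $B$. Splitting $(x-\log(1+x))*\mu^X$ into small jumps, where $x-\log(1+x)\le\kappa\,x^2$, and the finitely many large jumps, each contributing a finite amount since $\Delta X>-1$ strictly on $\lc0,\tau_c\lc$, gives $(x-\log(1+x))*\mu^X_{\tau_c-}<\infty$ on $B$. But then the identity
\[
\log Z = X - \tfrac{1}{2}[X^c,X^c] - (x-\log(1+x))*\mu^X
\]
on $\lc0,\tau_c\lc$ shows $\log Z_{\tau_c-}$ is finite on $B$, so $Z_{\tau_c-}>0$ on $B$, contradicting the fact that $Z\to 0$ at $\tau_c$ on $A_C$. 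Hence $\tau_c$ is $\Lcal(Z)$--maximal and $\Lcal(Z)\in\mathfrak{L}$.

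Third, I would check the composition identities and the equality of times. For $\Lcal\circ\Ecal$, the SDE $\Ecal(X)=1+\Ecal(X)_-\cdot X$ on $\lc0,\tau\lc$, together with $\Ecal(X)_->0$ there, immediately gives $\Lcal(\Ecal(X))=X$ on $\lc0,\tau\lc$ and both processes are constant thereafter. For $\Ecal\circ\Lcal$, both $Z$ and $\Ecal(\Lcal(Z))$ satisfy $U=1+U_-\cdot\Lcal(Z)$ on $\lc0,\tau_c\lc$, so they coincide there by uniqueness; on $\rc\tau_c,\infty\rc$, which is nonempty only on $A_C\cap\{\tau_c<\infty\}$, both processes vanish (the former by definition of $\tau_c$ and the latter by the convention in Definition~\ref{D:stochExp}, using the just-established maximality). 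Finally, for $Z=\Ecal(X)$ with $X\in\mathfrak{L}$, I split on $\rho=\inf\{t:\Delta X_t=-1\}$. If $\rho<\tau$, then $X$ is constant on $\lc\rho,\tau\lc$, so $\lim_{t\to\tau} X_t$ exists in $\R$ and $X$--maximality forces $\tau=\infty$; meanwhile $\tau_0=\rho$ with $Z_{\rho-}>0$ since $\Delta X>-1$ on $\lc0,\rho\lc$, so $\tau_c=\infty=\tau$. If $\rho\ge\tau$, then $\Ecal(X)>0$ on $\lc0,\tau\lc$ and, by the excerpt's observation, $\Ecal(X)_t\to 0$ at $\tau$ on $\{\tau<\infty\}$, giving $\tau_0=\tau$, $A_C$ holds, and hence $\tau_c=\tau$; on $\{\tau=\infty\}\cap\{\rho\ge\tau\}$ one has $\rho=\infty$ as well, so $Z$ remains strictly positive and $\tau_c=\infty=\tau$.

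The main obstacle is the middle stage: controlling the jump compensator $(x-\log(1+x))*\mu^X$ near $-1$. Extended local integrability of $(\Delta X)^-$ via the uniform bound $\Delta X>-1$ is the tool that allows Theorem~\ref{T:conv} to be invoked and, via the quadratic--variation finiteness it produces, ruling out accumulation of large negative jumps and thus turning convergence of $X$ into finiteness of $\log Z_{\tau_c-}$ - the desired contradiction.
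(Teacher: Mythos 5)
Your overall architecture matches the paper's: establish that $\Ecal$ maps $\mathfrak L$ into $\mathfrak Z$ via Lemma~\ref{L:SMC} together with the \ref{T:conv:a}$\Leftrightarrow$\ref{T:conv:g} content of Theorem~\ref{T:conv}, obtain the composition identities from uniqueness of the exponential SDE, and deduce $\Lcal(Z)\in\mathfrak L$ together with $\tau=\tau_c$. Your Stage 2 re-derives the implication \ref{T:conv:a}$\Rightarrow$\ref{T:conv:g} by hand (via \ref{T:conv:a}$\Rightarrow$\ref{T:conv:c}\,\&\,\ref{T:conv:f} and the explicit formula for $\log Z$) where the paper simply cites it; that is valid and more self-contained, though it duplicates work already done inside the proof of Theorem~\ref{T:conv}. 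Note also that your Stage 2 uses $Z=\Ecal(\Lcal(Z))$ on $\lc0,\tau_c\lc$, which you only establish in Stage 3; this is not circular (the SDE-uniqueness step needs neither maximality nor membership in $\mathfrak L$), but the order of presentation should reflect it.

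There is one false claim in Stage 1 that needs repair. You assert that $\Delta\Lcal(Z)\in(-1,\infty)$ ``because $Z>0$ on $\lc0,\tau_c\lc$'' and that the constancy condition in the definition of $\mathfrak L$ is therefore vacuous. This fails on the event $A_J=\{Z_{\tau_0-}>0\}$, where $Z$ jumps to zero: there $\tau_c=\infty$, so $\lc0,\tau_c\lc=\lc0,\infty\lc$ contains $\tau_0$, and $\Delta\Lcal(Z)_{\tau_0}=\Delta Z_{\tau_0}/Z_{\tau_0-}=-1$. (The class $\mathfrak Z$ does not exclude local martingales that jump to zero; that restriction is imposed only in Section~\ref{S:NK}.) The constancy requirement is thus genuinely in force and must be verified; it does hold, because the integrand $\oo_{Z_->0}/Z_-$ vanishes on $\rc\tau_0,\infty\lc$ and $Z$ is absorbed at $0$ there, so $\Lcal(Z)$ is constant after the first time its jump equals $-1$, exactly as the definition of $\mathfrak L$ demands. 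The same caveat affects your Stage 3 appeal to ``$\Ecal(X)_->0$ on $\lc0,\tau\lc$,'' which is false after $\tau_J$ when $\tau_J<\tau$, although there you do patch it with the observation that both processes are constant thereafter. With this one-line correction the argument is complete: the maximality contradiction on $B\subset\{\tau_c<\infty\}$ (which lives entirely on $A_C$, where $\Delta\Lcal(Z)>-1$ does hold), the finiteness of $(x-\log(1+x))*\mu^X_{\tau_c-}$ deduced from $[X,X]_{\tau_c-}<\infty$, and the case analysis on $\rho=\tau_J$ for the identity $\tau=\tau_c$ are all correct.
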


\begin{proof}
First, $\Ecal$ maps each $X\in\mathfrak L$ to some $Z\in\mathfrak Z$, and the corresponding $X$--maximal foretellable time~$\tau$ equals $\tau_c$. To see this, note that the restriction of $Z = \mathcal{E}(X)$ to $\lc0,\tau\lc$ is a nonnegative local martingale on $\lc0,\tau\lc$. By Lemma~\ref{L:SMC}, it can be extended to a local martingale~$\overline{Z}$. The implication \ref{T:conv:a} $\Longrightarrow$ \ref{T:conv:g} in Theorem~\ref{T:conv} yields $Z = \overline{Z}$, whence $Z \in \mathfrak{Z}$. We also get $\tau_c=\tau$.

Next, $Z=\mathcal{E}(\mathcal{L}(Z))$ for each $Z\in\mathfrak Z$. This follows from $Z = 1 + Z_-(Z_-)^{-1} \1{Z_->0} \cdot Z = 1 + Z_-  \cdot \mathcal{L}(Z)$ in conjunction with uniqueness of solutions to this equation.

Finally, $\Lcal$ maps each $Z\in\mathfrak Z$ to some $X\in\mathfrak L$, and $\tau_0$ is equal to the corresponding $X$--maximal foretellable time~$\tau$. Indeed, $X = \mathcal{L}(Z)$ is a local martingale on $\lc0, \tau_c\lc$ with jumps $\Delta X=\Delta Z / Z_-\ge-1$, and is constant after the first time $\Delta X=-1$. Moreover, since $Z=\Ecal(\Lcal(Z))=\Ecal(X)$, the implication \ref{T:conv:g} $\Longrightarrow$ \ref{T:conv:a}  in Theorem~\ref{T:conv} yields that $\tau_c$ is $X$--maximal. Thus $X\in\mathfrak L$, with $\tau_c$ being the corresponding $X$--maximal foretellable time.
\end{proof}

Reciprocals of stochastic exponentials appear naturally in connection with changes of probability measure. We now develop some identities relating to such reciprocals. The following function plays an important role:
\begin{equation} \label{eq:phi}
\phi: (-1,\infty) \to (-1,\infty), \qquad \phi(x) = -1 + \frac{1}{1+x}.
\end{equation}
Note that $\phi$ is an involution, that is, $\phi(\phi(x))=x$. The following notation is convenient: Given functions $F:\Omega\times\R_+\times\R\to\R$ and $f:\R\to\R$, we write $F\circ f$ for the function $(\omega,t,x)\mapsto F(\omega,t,f(x))$. We now identify the reciprocal of a stochastic exponential or, more precisely, the stochastic logarithm of this reciprocal. Part of the following result is contained in Lemma~3.4 of~\citet{KK}.

\begin{theorem}[Reciprocal of a stochastic exponential] \label{T:reciprocal}
Let $\tau$ be a foretellable time, and let $M$ be a local martingale on $\lc 0,\tau\lc$ such that $\Delta M> -1$. Define a semimartingale $N$ on $\lc0,\tau\lc$ by
\begin{align}  \label{eq:defN}
	N = -M + [M^c, M^c] + \frac{x^2}{1+x} * \mu^M.
\end{align}
Then $\mathcal{E}(M) \mathcal{E}(N) = 1 \quad\text{on}\quad\lc0,\tau\lc$. Furthermore, a predictable function $F$ is $\mu^M$--integrable ($\nu^M$--integrable) if and only if $F \circ \phi$ is $\mu^N$--integrable ($\nu^N$--integrable). In this case, we have 
	\begin{align}  \label{eq:fmu}
		F*\mu^M = (F \circ \phi) * \mu^N
	\end{align}
on $\lc0,\tau\lc$.
The same formula holds if $\mu^M$ and $\mu^N$ are replaced by $\nu^M$ and $\nu^N$, respectively.
\end{theorem}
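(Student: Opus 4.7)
The plan is to establish $\Ecal(M)\Ecal(N) = 1$ by a direct It\^o computation, and then to deduce the integrability equivalences and substitution formulas from the key jump identity $\Delta N = \phi(\Delta M)$. All operations live on the stochastic interval $\lc 0,\tau\lc$; by restricting to an announcing sequence $(\tau_n)_{n \in \N}$ for $\tau$, it suffices to argue on each $[0,\tau_n]$ and then to piece the conclusions together.

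First I would set $Z = \Ecal(M)$. Because $\Delta M > -1$, Theorem~\ref{T:SE} shows $Z > 0$ on $\lc 0,\tau\lc$, and $Z$ is bounded away from zero on each compact subinterval. Applying It\^o's formula to $f(z) = 1/z$ and substituting $\dd Z = Z_-\,\dd M$ and $\dd [Z^c,Z^c] = Z_-^2\,\dd [M^c,M^c]$, then simplifying the jump correction via
\begin{equation*}
\frac{1}{Z_s} - \frac{1}{Z_{s-}} + \frac{\Delta Z_s}{Z_{s-}^2}
= \frac{1}{Z_{s-}}\cdot\frac{(\Delta M_s)^2}{1+\Delta M_s},
\end{equation*}
collects all three contributions into $(1/Z)_-\cdot N$, so $1/Z = 1 + (1/Z)_-\cdot N$. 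Uniqueness of solutions to this linear equation identifies $1/Z$ with $\Ecal(N)$, which is the first claim.

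Reading off jumps from the same telescoping gives $\Delta N = -\Delta M + (\Delta M)^2/(1+\Delta M) = \phi(\Delta M)$. Since $\phi$ is an involution on $(-1,\infty)$ fixing $0$, the jump times of $M$ and $N$ coincide, and for any nonnegative predictable $F$ the pathwise identity
\begin{equation*}
F * \mu^M_t = \sum_{0 < s \le t} F(s,\Delta M_s)\1{\Delta M_s \ne 0}
= \sum_{0 < s \le t} (F\circ\phi)(s,\Delta N_s)\1{\Delta N_s \ne 0}
= (F\circ\phi) * \mu^N_t
\end{equation*}
holds on $\lc 0,\tau\lc$. Applied to $|F|$ and using $\phi \circ \phi = \mathrm{id}$, this yields the claimed equivalence of $\mu^M$-integrability of $F$ and $\mu^N$-integrability of $F\circ\phi$, and splitting $F = F^+ - F^-$ extends~\eqref{eq:fmu} to general predictable $F$.

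For the corresponding identity on compensators I would argue by uniqueness. For $F$ bounded with support bounded away from $0$, both $F * \mu^M$ and $(F\circ\phi)*\mu^N$ have locally finitely many, bounded jumps, and their predictable compensators are $F * \nu^M$ and $(F\circ\phi) * \nu^N$ respectively; since the two processes are pathwise equal by the previous paragraph, uniqueness of the compensator forces $F * \nu^M = (F\circ\phi) * \nu^N$. A monotone-class argument then extends this identity to all nonnegative predictable $F$ and, by splitting, to the general case, yielding the equivalence of $\nu^M$-integrability of $F$ and $\nu^N$-integrability of $F\circ\phi$ along with the substitution formula. The only genuinely delicate point throughout is ensuring that these identities are bona fide equalities of semimartingale integrals on the stochastic interval rather than formal expressions; this is handled cleanly by the announcing sequence and the local nature of both the stochastic exponential equation and the compensator.
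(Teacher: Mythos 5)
Your argument is correct, and it reverses the logical order of the paper's proof in an interesting way. The paper first establishes the jump identity $\Delta N=\phi(\Delta M)$ and from it the pathwise substitution formula $G*\mu^N=(G\circ\phi)*\mu^M$ for nonnegative predictable $G$ (exactly as in your third paragraph), and only then derives $\Ecal(M)\Ecal(N)=1$ by plugging $G(y)=y-\log(1+y)$ into this formula and comparing the two explicit exponential expressions from Definition~\ref{D:stochExp}. You instead prove the product identity first, by applying It\^o's formula to $1/\Ecal(M)$ and identifying the result as the solution of $1+(1/Z)_-\cdot N$, then invoking uniqueness of the Dol\'eans-Dade equation; your computation of the jump correction is right, and the positivity of $Z$ and $Z_-$ (which follows already from the definition of $\Ecal$ when $\Delta M>-1$, not really requiring Theorem~\ref{T:SE}) is all that It\^o's formula on the open set $(0,\infty)$ needs. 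Your route has the merit of explaining where the formula for $N$ comes from, at the cost of handling the well-definedness of $\frac{x^2}{1+x}*\mu^M$ inside It\^o's jump sum; the paper's route avoids It\^o entirely but leans on the closed-form expression for the stochastic exponential. For the compensator statement the paper simply asserts that it ``follows immediately''; your uniqueness-of-compensator argument for bounded $F$ supported away from zero, followed by a monotone class extension, is a legitimate way to make that assertion precise (note only that local integrability of $F*\mu^M$ for such $F$, needed for the compensator to exist, comes from the finiteness of the number of jumps of size $\ge\varepsilon$ on compacts).
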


\begin{remark}
	Since $|x^2/(1+x)| \leq 2x^2$ for $|x|\le1/2$, the process $x^2/(1+x) * \mu^M$ appearing in \eqref{eq:defN} is finite-valued on $\lc0,\tau\lc$.\qed
\end{remark}

\begin{remark}\label{R:alternative}
	Since $\phi$ is an involution, the identity \eqref{eq:fmu} is equivalent to
	\begin{align}  \label{eq:gmu}
		(G\circ \phi) * \mu^M = G * \mu^N,
	\end{align}
where $G$ is a $\mu^N$--integrable function. The analogous statement holds for $\nu^M$ and $\nu^N$ instead of $\mu^M$ and $\mu^N$, respectively. \qed
\end{remark}

\begin{proof}[Proof of Theorem~\ref{T:reciprocal}]
Note that we have
\[
\Delta N = -\Delta M + \frac{(\Delta M)^2}{1+\Delta M} = \phi(\Delta M) \quad \text{on}\quad \lc0, \tau\lc.
\]
This implies the equality $G*\mu^N=(G\circ\phi)*\mu^M$ on $\lc0,\tau\lc$ for every nonnegative predictable function~$G$. Taking $G=F\circ \phi$ and using that $\phi$ is an involution yields the integrability claim as well as~\eqref{eq:fmu}, and hence also~\eqref{eq:gmu}. The corresponding assertion for the predictable compensators follows immediately. Now, applying~\eqref{eq:gmu} to the function $G(y) = y-\log(1+y)$ yields
\[
(y-\log(1+y))*\mu^N = \left(-1+\frac{1}{1+x} + \log(1+x)\right) * \mu^M \quad \text{on}\quad \lc0, \tau\lc. 
\]
A direct calculation then gives $\Ecal(M)=1/\Ecal(N)$ on $\lc0,\tau\lc$. This completes the proof.
\end{proof}

\section{The F\"ollmer measure} \label{S:follmer}

In this appendix we review the construction of a probability measure that only requires the Radon-Nikodym derivative to be a nonnegative local martingale.  We rely on results by \citet{Pa}, \citet{F1972}, and \citet{M}, and refer to \citet{Perkowski_Ruf_2014} and \citet{CFR2011} for further details, generalizations, proofs, and related literature. A concise description of the construction is available in~\citet{Larsson_2013}.

Consider a filtered probability space $(\Omega,\Fcal,\F,\P)$, where $\Omega$ is a set of possibly explosive paths taking values in a Polish space, $\F$ is the right-continuous modification of the canonical filtration, and $\Fcal = \bigvee_{t\geq 0} \Fcal_t$; see Assumption~$(\mathcal{P})$ in \citet{Perkowski_Ruf_2014} for details. Let $Z$ denote a nonnegative local martingale with $Z_0 = 1$, and define the stopping times $\tau_0=\inf\{t \geq 0:Z_t=0\}$ and $\tau_\infty=\lim_{n\to\infty}\inf\{t\geq 0:Z_t\ge n\}$. Assume that $Z$ does not jump to zero, that is, $Z_{\tau_0-}=0$ on $\{\tau_0<\infty\}$.  For notational convenience we assume, without loss of generality, that we work with a version of $Z$ that satisfies $Z_t(\omega) = \infty$ for all $(t,\omega)$ with $\tau_\infty(\omega) \leq t$.

\begin{theorem}[F\"ollmer's change of measure]  \label{T numeraire}
Under the assumptions of this appendix, there exists a probability measure $\Q$ on $\mathcal F$, unique on $\mathcal F_{\tau_\infty-}$, such that
\[
\E_\P\left[ Z_\sigma G\right] = \E_\Q\left[ G\1{Z_\sigma<\infty}\right]
\]
holds for any stopping time $\sigma$ and any nonnegative $\Fcal_\sigma$--measurable random variable~$G$. Moreover, $ Y = (1/Z)\oo_{\lc0,\tau_\infty\lc}$ is a nonnegative $\Q$--local martingale that does not jump to zero. Finally, $Z$ is a uniformly integrable martingale under~$\P$ if and only if $\Q(Y_{\infty-}>0)=1$, that is, if and only if $Z$ does not explode under~$\Q$.
\end{theorem}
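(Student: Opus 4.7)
The plan is to construct $\Q$ via Parthasarathy's extension theorem for standard systems, and then derive all the claimed properties from the resulting change-of-measure identity. First I would observe that the hypothesis on~$\F$---that it is the right-continuous canonical filtration on a Polish space of explosive paths---implies that $(\mathcal F_t)_{t\ge 0}$ forms a standard system. For each $t\ge 0$, define a subprobability measure $\mu_t$ on $\mathcal F_t$ by $\dd\mu_t/\dd\P = Z_t\1{Z_t<\infty}$. The martingale property of~$Z$ ensures that $(\mu_t)_{t\ge 0}$ is consistent, i.e., $\mu_t|_{\mathcal F_s}=\mu_s$ for $s\le t$, and Parthasarathy's extension theorem, applied as in~\citet{F1972} or~\citet{Perkowski_Ruf_2014}, yields a unique probability measure $\Q$ on $\mathcal F_{\tau_\infty-}$ extending the $\mu_t$'s and assigning the remaining mass $1-\E_\P[Z_t\1{Z_t<\infty}]$ to the explosion set $\{\tau_\infty\le t\}$. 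Uniqueness is immediate since $\mathcal F_{\tau_\infty-}$ is generated by $\bigcup_{t\ge 0}\mathcal F_t$. The change-of-measure identity $\E_\P[Z_\sigma G]=\E_\Q[G\1{Z_\sigma<\infty}]$ then holds for bounded stopping times $\sigma\le n$ by combining optional stopping of $Z$ under~$\P$ with the construction of~$\Q$, and extends to arbitrary (possibly infinite) stopping times by applying it at $\sigma\wedge n$ and letting $n\to\infty$, noting that $\{Z_{\sigma\wedge n}<\infty\}=\{\tau_\infty>\sigma\wedge n\}\downarrow\{\tau_\infty>\sigma\}=\{Z_\sigma<\infty\}$.

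Next I would verify that $Y=(1/Z)\oo_{\lc 0,\tau_\infty\lc}$ is a nonnegative $\Q$-local martingale. Localize by $\rho_n=\inf\{t\ge 0:Z_t\ge n\}$, so that $Y^{\rho_n}$ is bounded below by~$1/n$ on $\lc 0,\rho_n\lc$. A direct computation from the change-of-measure identity, applied to $Z^{\rho_n}$ and bounded $G\in\mathcal F_\sigma$, gives
\[
\E_\Q[Y_{\sigma\wedge\rho_n} G\1{\sigma\wedge\rho_n<\tau_\infty}] = \E_\P[G\1{\sigma\wedge\rho_n<\tau_0}],
\]
which is independent of~$\sigma$ in the sense needed to conclude that $Y^{\rho_n}$ is a $\Q$-martingale; since $\rho_n\uparrow\tau_\infty$ under~$\Q$, $Y$ is a local $\Q$-martingale. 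That $Y$ does not jump to zero under~$\Q$ is a consequence of the canonical-space setup: a jump of $Y$ to zero at a time $t<\tau_\infty$ would correspond to $Z$ jumping from a finite value to~$\infty$ at~$t$, which cannot occur in the explosive canonical path space where explosion is always a continuous passage to infinity, so $Y_{\tau_\infty-}=0$ continuously on $\{\tau_\infty<\infty\}$.

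For the uniform integrability equivalence, apply the change-of-measure identity at $\sigma=\infty$ and $G=1$ to obtain $\E_\P[Z_\infty]=\Q(Z_\infty<\infty)$. Under our convention $Z_t=\infty$ for $t\ge\tau_\infty$, the event $\{Z_\infty<\infty\}$ equals $\{\tau_\infty=\infty\}\cap\{\lim_{t\to\infty}Z_t<\infty\}$, which by the no-jump-to-zero property of $Y$ coincides $\Q$-almost surely with $\{Y_{\infty-}>0\}$: on $\{\tau_\infty<\infty\}$ we have $Y_{\tau_\infty-}=0$ continuously, whereas on $\{\tau_\infty=\infty\}$ the nonnegative $\Q$-supermartingale $Y$ converges $\Q$-almost surely in~$[0,\infty)$, and $Y_{\infty-}>0$ iff $\lim_{t\to\infty}Z_t<\infty$. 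Since $Z$ is a nonnegative $\P$-supermartingale with $Z_0=1$, it is a uniformly integrable $\P$-martingale iff $\E_\P[Z_\infty]=1$, yielding the stated equivalence. The main obstacle is the measure-theoretic construction of $\Q$ in the first step, specifically verifying the standard-system hypothesis and tracking how the singular mass $1-\E_\P[Z_t\1{Z_t<\infty}]$ is assigned to the canonical explosion set; once $\Q$ is in hand, the remaining steps are routine applications of the change-of-measure formula and standard martingale theory.
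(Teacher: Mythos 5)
The paper's own ``proof'' is a citation to \citet{Perkowski_Ruf_2014} and \citet{CFR2011}, and your proposal attempts to reconstruct that construction directly; the strategy (Parthasarathy extension over a standard system, then the change-of-measure identity) is the right one, but the central step of your construction contains a genuine error. You define $\mu_t$ on $\Fcal_t$ by $\dd\mu_t/\dd\P=Z_t\1{Z_t<\infty}$ and claim the family $(\mu_t)_{t\ge0}$ is consistent. Since $\P(\tau_\infty<\infty)=0$, the indicator is $\P$--a.s.\ equal to one, so consistency $\mu_t|_{\Fcal_s}=\mu_s$ is \emph{equivalent} to $\E_\P[Z_t\mid\Fcal_s]=Z_s$, i.e.\ to $Z$ being a true martingale. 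But $Z$ is only assumed to be a local martingale, and for a strict local martingale (the case the F\"ollmer measure is designed for, e.g.\ the reciprocal Bessel(3) process) one has $\E_\P[Z_t]<\E_\P[Z_s]$ for $t>s$, so the family is not consistent and there is nothing for Parthasarathy's theorem to extend. The ``remaining mass'' $1-\E_\P[Z_t]$ cannot simply be ``assigned'' to $\{\tau_\infty\le t\}$; it must be produced by the construction. The correct route is to index the consistent family by a localizing sequence: with $(\tau_n)_{n\in\N}$ localizing $Z$ (or $\rho_n=\inf\{t:Z_t\ge n\}$, for which $Z^{\rho_n}$ is a uniformly integrable martingale), the measures $\Q_n:=Z_{\tau_n}\cdot\P$ on $\Fcal_{\tau_n}$ are genuine probability measures satisfying $\Q_{n+1}|_{\Fcal_{\tau_n}}=\Q_n$ by optional sampling, and---after a deterministic change of time mapping $[0,\infty]$ to a compact interval, as the paper indicates---the standard-system extension applies to $\bigvee_n\Fcal_{\tau_n}=\Fcal_{\tau_\infty-}$ and automatically places the missing mass on the explosion event.

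Two further steps are not adequately justified. First, your argument that $Y$ does not jump to zero under $\Q$ (``explosion is always a continuous passage to infinity in the explosive canonical path space'') is false as a statement about the path space: whether $Z$ reaches infinity continuously or by a jump under $\Q$ is a property of the measure, not of $\widetilde\Omega$, and it requires a proof (in the references it follows from the fine structure of the construction along the stopping times $\rho_n$, using that $\P(Z_{\rho_n}=\infty)=0$ for each $n$; note that the change-of-measure identity alone gives no information about $\Q$ on $\{Z_\sigma=\infty\}$). Second, the passage from bounded to general stopping times by letting $n\to\infty$ in $\sigma\wedge n$ is exactly where uniform integrability of $Z$ would be needed on the $\P$--side, and $G$ is $\Fcal_\sigma$-- but not $\Fcal_{\sigma\wedge n}$--measurable; this limit has to be handled via Fatou/monotone arguments on both sides simultaneously, which is where the cited propositions do real work. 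The final equivalence (UI martingale iff $\E_\P[Z_\infty]=1$ iff $\Q(Y_{\infty-}>0)=1$) is fine once the identity at $\sigma=\infty$ and the no-jump-to-zero property are in hand.
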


\begin{proof}
The statement is proven in Propositions~2.3 and 2.5 and in Theorem~3.1 in  \citet{Perkowski_Ruf_2014}, after a  change of time that maps $[0,\infty]$ to a compact time interval; see also Theorem~2.1 in \citet{CFR2011}. 
\end{proof}

Now, let $M=\Lcal(Z)$ be the stochastic logarithm of $Z$. Thus $Z=\Ecal(M)$, and $M$ is a $\P$--local martingale on $\lc0,\tau_0\lc$ with $\Delta M>-1$. The following lemma identifies the compensator under~$\Q$ of the jump measure of~$N$, defined in \eqref{eq:defN}. While it can be obtained using general theory (e.g., Theorem~III.3.17 in \citet{JacodS}), we give an elementary proof for completeness.

\begin{lemma} \label{L:predc}
Under the assumptions of this appendix, let $\Q$ denote the probability measure in Theorem~\ref{T numeraire} corresponding to $Z$. Then the compensator under $\Q$ of $\mu^N$ is given by $\widehat{\nu}^N =  \nu^N/(1+y)$.
\end{lemma}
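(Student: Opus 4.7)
The plan is to verify the defining property of a compensator: for every bounded nonnegative predictable function $F$ on $\Omega \times \R_+ \times \R$ vanishing on a neighbourhood of $\{y = 0\}$, the process $F * \mu^N - F * \widehat\nu^N$ should be a $\Q$--local martingale on $\lc 0, \tau_\infty\lc$; a monotone-class extension then handles general predictable $F$. Localizing by $\rho_n = n \wedge \inf\{t \geq 0 : Z_t \geq n\}$---a bounded stopping time with $\rho_n < \tau_\infty$ along which $Z^{\rho_n}$ is a uniformly integrable $\P$--martingale---Theorem~\ref{T numeraire} reduces the assertion to
\[
\E_\P\left[Z_\sigma (F * \mu^N_\sigma - F * \widehat\nu^N_\sigma)\right] = 0
\]
for every bounded stopping time $\sigma \leq \rho_n$.

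First I would compute $\E_\P[Z_\sigma\,F * \mu^N_\sigma]$ by integration by parts on $Z \cdot (F * \mu^N)$. The key observation is that $\Delta Z = Z_-\Delta M$ together with the involution identity $\Delta M = \phi(\Delta N) = -\Delta N/(1+\Delta N)$ makes the bracket contribution $\Delta Z_s\,\Delta(F * \mu^N)_s = -Z_{s-}\,\Delta N_s F(s,\Delta N_s)/(1+\Delta N_s)$ combine with the term $Z_{s-}\,F(s, \Delta N_s)$ from $Z_-\,d(F * \mu^N)$ to yield the single jump contribution $Z_{s-}\,F(s, y)/(1+y)\,d\mu^N$, while $(F * \mu^N)_- \cdot Z$ is a $\P$--local martingale. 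Taking $\P$--expectation and invoking the defining property of the $\P$--compensator $\nu^N$ on the bounded predictable integrand $Z_-\,F(s,y)/(1+y)$ yields
\[
\E_\P[Z_\sigma\,F * \mu^N_\sigma] = \E_\P\!\left[\frac{Z_- F(s,y)}{1+y} * \nu^N_\sigma\right] = \E_\P[Z_- F * \widehat\nu^N_\sigma].
\]
Integration by parts on $Z \cdot (F * \widehat\nu^N)$ produces the same integral $\int_0^\sigma Z_{s-}\,d(F*\widehat\nu^N)_s$ plus a $\P$--martingale plus the bracket $[Z, F * \widehat\nu^N]_\sigma$. That bracket is a sum over the at-most-countably-many predictable atoms $T$ of $\widehat\nu^N$ of $\Delta Z_T \cdot \Delta (F*\widehat\nu^N)_T\,\1{T \leq \sigma}$; predictability of $\widehat\nu^N$ makes $\Delta(F*\widehat\nu^N)_T$ $\mathcal{F}_{T-}$--measurable, and an announcing sequence $T_k \uparrow T$ together with $\{T \le \sigma\} = \bigcap_k \{T_k < \sigma\}$ gives $\1{T \le \sigma} \in \mathcal{F}_{T-}$. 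Conditioning on $\mathcal{F}_{T-}$ and using $\E_\P[\Delta Z_T \mid \mathcal{F}_{T-}] = 0$ (valid because $Z^{\rho_n}$ is a $\P$--martingale and $T \leq \sigma \leq \rho_n$) eliminates each term, so $\E_\P[[Z, F * \widehat\nu^N]_\sigma] = 0$ and both sides of the target identity match.

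The hard part will be the predictable-atoms step. When $M$ (equivalently $N$) is quasi-left continuous, $\widehat\nu^N$ has no atoms, $F * \widehat\nu^N$ is continuous, the bracket $[Z, F * \widehat\nu^N]$ vanishes identically, and the argument is immediate. In the general case one must measurably enumerate the predictable jump times of $\widehat\nu^N$, perform the $\mathcal{F}_{T-}$--measurability check above, and interchange sum and expectation using boundedness of $F$ together with $Z \leq n$ on $\lc 0, \rho_n\rc$. The remaining steps---routine truncation in $y$ and a monotone-class extension to general predictable $F$---introduce no new ideas.
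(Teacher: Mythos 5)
Your proposal is correct and follows essentially the same route as the paper: reduce to a $\P$--expectation identity via the F\"ollmer transfer formula, integrate by parts to move $Z$ inside the integral against $\mu^N$ (producing the factor $1/(1+y)$ from the jump relation $\Delta M=\phi(\Delta N)$), apply the $\P$--compensator property, and kill the bracket $[Z,F*\widehat\nu^N]$. The only differences are cosmetic: the paper does the bookkeeping in terms of $\mu^M,\nu^M$ via Theorem~\ref{T:reciprocal} and disposes of the bracket term by citing Yoeurp's lemma, whereas you work directly with $\mu^N,\nu^N$ and re-derive that lemma by hand through the predictable atoms.
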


\begin{proof}
Let $G$ be a nonnegative predictable function and define $F=G\circ\phi$. By monotone convergence and Thereom~II.1.8 in \citet{JacodS}, the claim is a simple consequence of the equalities
\[
\E_\Q\left[ G * \mu^N_\sigma\right] = \E_\P\left[\mathcal{E}(M)_\sigma \, \left(F * \mu^M_\sigma\right)\right]  = \E_\P\left[\mathcal{E}(M)_\sigma \, \left((1+x)F* \nu^M_\sigma\right)\right] = \E_\Q\left[ G  * \frac{\nu^N_\sigma}{1+y}\right],
\]
valid for any stopping time $\sigma<\tau_0\wedge\tau_\infty$ ($(\P+\Q)/2$--almost surely) such that $\Ecal(M)^\sigma$ is a uniformly integrable martingale under~$\P$. The first equality follows from Theorem~\ref{T:reciprocal} and Theorem~\ref{T numeraire}, as does the third equality. 

We now prove the second equality. The integration-by-parts formula, the equality $[\mathcal{E}(M),F * \mu^M]=(x\mathcal{E}(M)_-F) * \mu^M$, and the associativity rule $\mathcal{E}(M)_- \cdot (F * \mu^M)=(\mathcal{E}(M)_-F) * \mu^M$ yield
\begin{equation}
\mathcal{E}(M) (F * \mu^M)= (\mathcal{E}(M)_-(1+x)F) * \mu^M + (F * \mu^M)_- \cdot \mathcal{E}(M)\label{eq:L3a}
\end{equation}
on $\lc0,\tau_0\lc$, and similarly 
\begin{align}
\mathcal{E}(M) ((1+x)F * \nu^M) &= (\mathcal{E}(M)_-(1+x)F) * \nu^M+ ((1+x)F * \nu^M)_- \cdot \mathcal{E}(M)   \nonumber  \\
&\qquad+ [\mathcal{E}(M),(1+x) F * \nu^M].   \label{eq:L3b}
\end{align}
Let $(\tau_n)_{n\in\N}$ be a localizing sequence for the following three $\P$--local martingales on $\lc0,\tau_0\lc$: $(F * \mu^M)_- \cdot \mathcal{E}(M)$,  $((1+x) F * \nu^M)_- \cdot \mathcal{E}(M)$, and $[\mathcal{E}(M),(1+x) F * \nu^M]$. The latter is a local martingale on $\lc0,\tau_0\lc$ by Yoeurp's lemma; see Example~9.4(1) in \citet{HeWangYan}. Taking expectations in \eqref{eq:L3a} and \eqref{eq:L3b} and recalling the defining property of the predictable compensator yields
\[
\E_\P\left[\mathcal{E}(M)_\sigma (F * \mu^M)_{\sigma\wedge\tau_n}\right]=\E_\P\left[\mathcal{E}(M)_\sigma ((1+x)F * \nu^M)_{\sigma\wedge\tau_n}\right].
\]
Monotone convergence gives the desired conclusion.
\end{proof}

\section{Extended local integrability under a change of measure} \label{App:Z}

Let $Z$ be a nonnegative local martingale with explosion time $\tau_\infty=\lim_{n\to\infty}\inf\{t\geq 0:Z_t\ge n\}$, and let~$\Q$ be the corresponding F\"ollmer measure; see Theorem~\ref{T numeraire}. It was mentioned in Remark~\ref{R:ELI Q P} that local uniform integrability under~$\Q$ can be characterized in terms of~$\P$. We now provide this characterization.

\begin{definition}[Extended $Z$--localization]  \label{D:ELZI}
With the notation of this appendix, a  progressive process $U$ is called {\em extended locally $Z$--integrable} if there exists a nondecreasing sequence  $(\rho_m)_{m \in \N}$ of stopping times  such that the following two conditions hold:
\begin{enumerate}
\item\label{D:ELZI:i}  $\lim_{m \to \infty} \E_\P[\1{\rho_m < \infty}Z_{\rho_m}] = 0$.
\item\label{D:ELZI:ii}   $\lim_{n \to \infty} \E_\P[\sup_{t \leq \tau_n \wedge \rho_m} U_t\, Z_{\tau_n \wedge \rho_m}] < \infty$ for all $m \in \N$ and a localizing sequence $(\tau_n)_{n \in \N}$ of $Z$.
\end{enumerate}
\end{definition}

\begin{remark} \label{R:5.7}
We make the following remarks concerning Definition~\ref{D:ELZI}.
\begin{itemize}
\item If $Z$ is not a uniformly integrable martingale under $\P$, then a sequence $(\rho_m)_{m\in\N}$ satisfying condition \ref{D:ELZI:i}   of Definition~\ref{D:ELZI} can never be a localizing sequence for $Z$. To see this, note that if~$Z$ is not a uniformly integrable martingale under~$\P$, then
\[
\E_\P[\1{\sigma < \infty}Z_\sigma] \geq \E_\P[Z_\sigma]  -  \E_\P[Z_\infty] = 1  -  \E_\P[Z_\infty]  > 0
\]
for any stopping time $\sigma$ such that $Z^\sigma$ is a uniformly integrable martingale. 
\item As a warning, we note that $U$ being c\`adl\`ag adapted and extended locally bounded is, in general, not sufficient for $U$ being extended locally $Z$--integrable. However, clearly $U$ being bounded is sufficient.
\qed
\end{itemize}
\end{remark}

\begin{lemma} \label{L:extended locally}
With the notation and assumptions of this appendix, let $U$ be a progressive process on $\lc0,\tau_\infty\lc$ with $U_0 = 0$.  Then $U$ is extended locally integrable under $\Q$ if and only if $U$ is extended locally $Z$--integrable under $\P$.
\end{lemma}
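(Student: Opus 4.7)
The plan is to translate both notions via F\"ollmer's identity from Theorem~\ref{T numeraire}, which states $\E_\P[Z_\sigma G]=\E_\Q[G\,\oo_{\{Z_\sigma<\infty\}}]$ for any stopping time $\sigma$ and nonnegative $\Fcal_\sigma$--measurable~$G$. Throughout one uses a common nondecreasing sequence $(\rho_m)_{m\in\N}$ to witness both notions. For Definition~\ref{D:ELZI}\ref{D:ELZI:ii} one should interpret the $\sup$ with $|U_t|$ in place of $U_t$ (equivalently, apply the argument separately to $U^+$ and $U^-$), and choose indistinguishable versions of the running suprema as in Section~\ref{S:prelim}.

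First I would translate condition \ref{D:ELZI:i}. Since the convention forces $Z_t=\infty$ on $\lc\tau_\infty,\infty\lc$, applying F\"ollmer's identity with $\sigma=\rho_m$ and $G=\oo_{\{\rho_m<\infty\}}$ gives $\E_\P[Z_{\rho_m}\oo_{\{\rho_m<\infty\}}]=\Q(\rho_m<\tau_\infty)$, after noting the inclusion $\{\rho_m<\tau_\infty\}\subset\{\rho_m<\infty\}$. As $(\rho_m)_{m\in\N}$ is nondecreasing, the vanishing of the left-hand side as $m\to\infty$ is equivalent to $\Q(\bigcup_{m\in\N}\{\rho_m\ge\tau_\infty\})=1$, which is precisely Definition~\ref{D:extended}\ref{D:extended:ii} under~$\Q$.

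For condition \ref{D:ELZI:ii}, I would take $(\tau_n)_{n\in\N}$ to be the hitting times $\tau_n=\inf\{t\ge0:Z_t\ge n\}$, which form a localizing sequence for $Z$ and satisfy $\tau_n\nearrow\tau_\infty$ by definition. Since $\tau_n<\tau_\infty$ and hence $Z_{\tau_n\wedge\rho_m}<\infty$, F\"ollmer's identity with $\sigma=\tau_n\wedge\rho_m$ and $G=\sup_{t\le\tau_n\wedge\rho_m}|U_t|$ yields
\[
\E_\P\Bigl[\sup_{t\le\tau_n\wedge\rho_m}|U_t|\;Z_{\tau_n\wedge\rho_m}\Bigr]=\E_\Q\Bigl[\sup_{t\le\tau_n\wedge\rho_m}|U_t|\Bigr].
\]
The right-hand side is nondecreasing in $n$ and, by monotone convergence, increases to $\E_\Q[\sup_{t<\tau_\infty,\,t\le\rho_m}|U_t|]$. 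A case split on $\{\rho_m<\tau_\infty\}$ versus $\{\rho_m\ge\tau_\infty\}$, combined with the stopping convention of Section~\ref{S:prelim} and $U_0=0$, shows that the latter quantity equals $\E_\Q[\sup_{t\ge0}|U^{\rho_m}_t|]$. Thus the limit in Definition~\ref{D:ELZI}\ref{D:ELZI:ii} is finite if and only if $\Theta_m:=\sup_{t\ge0}|U^{\rho_m}_t|$ is $\Q$--integrable, which is precisely condition~(i) of Definition~\ref{D:extended} under~$\Q$ with this choice of $\Theta_m$.

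The main technical nuisance is the identification $\sup_{t<\tau_\infty,\,t\le\rho_m}|U_t|=\sup_{t\ge0}|U^{\rho_m}_t|$ in the last step. On $\{\rho_m<\tau_\infty\}$ both expressions equal $\sup_{t\le\rho_m}|U_t|$; on $\{\rho_m\ge\tau_\infty\}$ the convention $U^{\rho_m}_t=U_{\rho_m}\oo_{\{\rho_m<\tau_\infty\}}=0$ for $t\ge\rho_m$ together with $U_0=0$ makes both equal $\sup_{t<\tau_\infty}|U_t|$. Granted this identification, the equivalence of both pairs of conditions, and hence of the two notions of extended localization, drops out immediately.
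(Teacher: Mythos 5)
Your proof is correct and follows essentially the same route as the paper's: condition (i) of Definition~\ref{D:ELZI} is translated via the F\"ollmer identity into $\lim_{m\to\infty}\Q(\rho_m\ge\tau_\infty)=1$, and condition (ii) is translated, using a measurable version of the running supremum and monotone convergence along a localizing sequence, into $\Q$--integrability of $\sup_{t\le\rho_m}|U_t|$. The measurability point you flag (replacing $\sup_{s\le\cdot}|U_s|$ by an indistinguishable progressive process) is exactly how the paper handles it as well.
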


\begin{proof}
Let $(\rho_m)_{m \in \N}$ be a nondecreasing sequence of stopping times.  Then $\lim_{m \to \infty} \Q(\rho_m \geq \tau_\infty) = 1$ if and only if $\lim_{m \to \infty} \E_\P[\1{\rho_m < \infty}Z_{\rho_m}] = 0$ since 
\begin{align*}
\Q(\rho_m < \tau_\infty)  =  \E_\Q\left[\1{\rho_m<\tau_\infty} Z_{\rho_m} \frac{1}{Z_{\rho_m}}\right]  =  \E_\P\left[\1{\rho_m<\infty} Z_{\rho_n} \right]
\end{align*}
for each $m \in \N$.

Next, since $U$ is progressive, the left-continuous process $\sup_{s<\cdot}|U_s|$ is adapted to the $\P$--augmentation $\overline\F$ of $\F$; see the proof of Theorem~IV.33 in~\cite{Dellacherie/Meyer:1978}. Hence it is $\overline \F$--predictable, so we can find an $\F$--predictable process $V$ that is indistinguishable from it; see Lemma~7 in Appendix~1 of~\cite{Dellacherie/Meyer:1982}. Setting $W_t=\max(V_t,|U_t|)$ it follows that $W$ is progressive with respect to $\F$ and satisfies $\sup_{s\le\cdot}|U_s|= W$ almost surely. Moreover, for each $m \in \N$,
\begin{align*}
\E_\Q\left[W_{\rho_m}\right]  = \lim_{n \to \infty} \E_\Q\left[W_{\tau_n \wedge \rho_m}  \right]  = \lim_{n \to \infty} \E_\P\left[ W_{\tau_n \wedge \rho_m} Z_{\tau_n \wedge \rho_m}\right]
\end{align*}
for any localizing sequence $(\tau_n)_{n \in \N}$ of $Z$. These observations yield the claimed characterization.
\end{proof}

\section{Embedding into path space} \label{app:embed}

The arguments in Section~\ref{S:NK} relate the martingale property of a nonnegative local $\P$--martingale $Z$ to the  convergence of the stochastic logarithm $N$ of $1/Z$ under a related probability measure $\Q$.  We can only guarantee the existence of this probability measure if our filtered measurable space is of the canonical type discussed in Appendix~\ref{S:follmer}.  We now argue that this is not a restriction.  In a nutshell,  we can always embed $Z = \mathcal E(M)$ along with all other relevant processes into such a canonical space. 

To fix some notation in this appendix, let $(\Omega,\Fcal,\F,\P)$ be a filtered probability space where the filtration $\F$ is right-continuous, let $\tau$ be a foretellable time, and let $M$ be a local martingale on $\lc0,\tau\lc$. Furthermore, let $(H^n)_{n\in\N}$ be an arbitrary, countable collection of c\`adl\`ag adapted processes on $\lc0,\tau\lc$ and let  $\G$ denote the right-continuous modification of the filtration generated by $(M, (H^n)_{n\in \N})$.

Define  $E=\R\times\R\times\cdots$ (countably many copies of $\R$) equipped with the product topology and note that $E$ is Polish. Let $\Delta \notin E$ denote some cemetery state.  Let $\widetilde\Omega$ consist of all  functions $\widetilde\omega:\R_+\to E \cup \{\Delta\}$ that are c\`adl\`ag on $[0,\zeta(\widetilde\omega))$, where $\zeta(\widetilde\omega)=\inf\{t\ge0:\widetilde\omega(t)=\Delta\}$, and that satisfy $\widetilde\omega(t)=\Delta$ for all $t\ge\zeta(\widetilde\omega)$. Let $\widetilde\F=(\widetilde{\Fcal}_t)_{t\ge0}$ be the right-continuous filtration generated by the coordinate process, and set $\widetilde\Fcal=\bigvee_{t\ge0}\widetilde{\Fcal}_t$.  

\begin{theorem}[Embedding into canonical space]\label{T:can_embedding}
	Under the notation of this appendix, there exist a measurable map $\Phi: (\Omega, \Fcal) \rightarrow (\widetilde{\Omega}, \widetilde {\Fcal})$ 
	and c\`adl\`ag  $\widetilde \F$--adapted processes $\widetilde{M}$ and  $(\widetilde H^n)_{n\in\N}$  such that  the following properties hold, where  $\widetilde \P = \P \circ \Phi^{-1}$ denotes the push-forward measure:
\begin{enumerate}
	\item\label{T:can_embedding1}  $\zeta$ is foretellable under $\widetilde\P$ and $\tau=\zeta\circ\Phi$, $\P$--almost surely.
	\item\label{T:can_embedding2}  $H^n=\widetilde H^n\circ\Phi$ on $\lc 0, \tau\lc$, $\P$--almost surely for each $n\in\N$.
		\item\label{T:can_embedding3}  
		 $M=\widetilde M\circ\Phi$ on $\lc 0, \tau\lc$, $\P$--almost surely, and $\widetilde M$ is a local $\widetilde \P$--martingale on $\lc0,\zeta\lc$; we denote the compensator of its jump measure by $\nu^{\widetilde M}$.
\item\label{T:can_embedding4} For any measurable function $f: \R\rightarrow \R$,  $f*\nu^M=(f*\nu^{\widetilde M})\circ\Phi$ on $\lc 0, \tau\lc$, $\P$--almost surely, if one side (and thus the other) is well-defined.
	\item\label{T:can_embedding5} For any $\widetilde \F$--optional process $\widetilde U$, the process $U=\widetilde U\circ\Phi$ is $\F$--optional. In particular, $\sigma=\widetilde\sigma\circ\Phi$ is an $\F$--stopping times for any $\widetilde\F$--stopping time~$\widetilde\sigma$.
\end{enumerate}
\end{theorem}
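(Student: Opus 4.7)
The plan is to embed $\omega$ into the canonical space via the obvious coordinate map and then transfer all properties by change-of-variables. Concretely, let $(\tau_n)_{n\in\N}$ be an announcing sequence for $\tau$, and augment the countable collection $(H^n)_{n\in\N}$ (without loss of generality) by the c\`adl\`ag $\F$-adapted indicator processes $I^n_t = \oo_{\{\tau_n \le t\}}$. Define
\begin{equation*}
\Phi(\omega)(t) = \big(M_t(\omega), H^1_t(\omega), H^2_t(\omega), \ldots\big) \quad\text{for } t<\tau(\omega), \qquad \Phi(\omega)(t) = \Delta \quad\text{for } t\ge\tau(\omega),
\end{equation*}
and let $\widetilde M, \widetilde H^n$ be the corresponding coordinate projections on $\lc 0,\zeta\lc$, extended (say by $0$) to $\rc\zeta,\infty\lc$. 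I would first verify that $\Phi(\omega)\in\widetilde\Omega$ for $\P$-a.e.\ $\omega$ (c\`adl\`ag structure on $\lc 0,\tau\lc$ plus the foretellability of $\tau$, discarding a $\P$-null set where the left limit at $\tau$ fails), and that $\Phi$ is $\Fcal/\widetilde\Fcal$-measurable by checking that each evaluation $\widetilde\omega\mapsto\widetilde\omega(s)$ pulls back to an $\Fcal_s$-measurable map on $\Omega$. This gives $\Phi^{-1}(\widetilde\Fcal_t)\subset\Fcal_t$, which is the key ingredient for all subsequent transfers.

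Items \ref{T:can_embedding1}--\ref{T:can_embedding3} then follow quickly. Define $\widetilde\tau_n = \inf\{t\ge 0 : \widetilde I^n_t = 1\}$, which are $\widetilde\F$-stopping times satisfying $\widetilde\tau_n\circ\Phi = \tau_n$, and hence announce $\zeta$ under $\widetilde\P$. The identifications $H^n = \widetilde H^n\circ\Phi$ and $M = \widetilde M\circ\Phi$ on $\lc 0,\tau\lc$ hold by definition. For the local $\widetilde\P$-martingale property of $\widetilde M$ on $\lc 0,\zeta\lc$, I would refine the announcing sequence so that each $M^{\tau_n}$ is a uniformly integrable $\P$-martingale, and then for $s\le t$ and $\widetilde A\in\widetilde\Fcal_s$ compute, using $\widetilde \P = \P\circ\Phi^{-1}$,
\begin{equation*}
\int_{\widetilde A} \widetilde M^{\widetilde\tau_n}_t\, d\widetilde\P = \int_{\Phi^{-1}(\widetilde A)} M^{\tau_n}_t\, d\P = \int_{\Phi^{-1}(\widetilde A)} M^{\tau_n}_s\, d\P = \int_{\widetilde A} \widetilde M^{\widetilde\tau_n}_s\, d\widetilde\P,
\end{equation*}
where $\Phi^{-1}(\widetilde A)\in\Fcal_s$ by the measurability established above.

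For item \ref{T:can_embedding4}, I would argue by uniqueness of the predictable compensator. The identity $\Delta M = \Delta \widetilde M\circ\Phi$ on $\lc 0,\tau\lc$ immediately yields $f*\mu^M = (f*\mu^{\widetilde M})\circ\Phi$ on that interval. Writing $\widetilde \nu = \nu^{\widetilde M}$, a change-of-variables argument analogous to the one above shows that $f*(\mu^{\widetilde M}-\widetilde\nu)\circ\Phi$ is a local $\P$-martingale on $\lc 0,\tau\lc$ whenever $f*(\mu^{\widetilde M}-\widetilde\nu)$ is a local $\widetilde\P$-martingale on $\lc 0,\zeta\lc$. Moreover, $(f*\widetilde\nu)\circ\Phi$ is $\F$-predictable (item \ref{T:can_embedding5} applied to the predictable class, generated by left-continuous adapted processes). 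Uniqueness of the dual predictable projection of $\mu^M$ then forces $\nu^M = \widetilde\nu\circ\Phi$ on $\lc 0,\tau\lc$. Item \ref{T:can_embedding5} itself is routine: the optional $\sigma$-algebra is generated by c\`adl\`ag adapted processes, and each such process on $\widetilde\Omega$ pulls back via $\Phi$ to a c\`adl\`ag $\F$-adapted process by the measurability of $\Phi$. The main obstacle is maintaining filtration compatibility \emph{without} invoking $\P$-augmentation: since $\F$ and $\widetilde\F$ are not completed, one has to ensure that pullbacks of predictable/optional processes land in the \emph{same} class (not merely in its $\P$-completion), and likewise that the c\`adl\`ag/left-limit structure at $\tau$ is preserved outside a genuine $\P$-null set; the cemetery convention and the use of the announcing sequence are what make this possible.
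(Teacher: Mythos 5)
Your construction and arguments for items \ref{T:can_embedding1}, \ref{T:can_embedding2}, \ref{T:can_embedding3} and \ref{T:can_embedding5} essentially match the paper's: the coordinate map built from $M$, the $H^n$ and indicators of an announcing sequence, the inclusion $\Phi^{-1}(\widetilde\Fcal_t)\subset\Fcal_t$, and the pushforward of the martingale property (which goes from the larger filtration $\F$ to the smaller coordinate filtration, hence is legitimate) all work as you describe. The paper handles the exceptional set where paths fail to be c\`adl\`ag on $\lc0,\tau\lc$ slightly more carefully, via a stopping time $T$ with $\P(T<\infty)=0$ and sending the path to $\Delta$ after $T\wedge\tau$, so that $\Phi$ is defined and measurable everywhere rather than off a null set; this is cosmetic.

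The genuine gap is in item \ref{T:can_embedding4}. The compensator $\nu^M$ is the dual predictable projection of $\mu^M$ \emph{relative to $\F$}, whereas your embedding transports only the information generated by $(M,(H^n)_{n\in\N})$; the pullback filtration $\Phi^{-1}(\widetilde\F)$ is in general strictly smaller than $\F$. Pulling back the $\widetilde\P$--local martingale $f*(\mu^{\widetilde M}-\nu^{\widetilde M})$ yields a local martingale only with respect to $\Phi^{-1}(\widetilde\F)$, not with respect to $\F$, so the uniqueness of the dual predictable projection --- which is uniqueness \emph{within a fixed filtration} --- identifies $(f*\nu^{\widetilde M})\circ\Phi$ as the compensator of $f*\mu^M$ relative to the smaller filtration only. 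Compensators genuinely change when the filtration shrinks: a jump time of $M$ can be predictable for $\F$ yet totally inaccessible for the filtration generated by $M$ alone, in which case the two compensators disagree on their atoms, and then $f*\nu^M\ne(f*\nu^{\widetilde M})\circ\Phi$. This is exactly the difficulty that Lemma~\ref{L:shrink} of the paper is designed to overcome: one includes the processes $f^n*\nu^M$ themselves, for a measure-determining family $(f^n)_{n\in\N}$, as additional coordinates of $\Phi$, together with auxiliary processes $K^n$ (announcing sequences $\rho_{m,k}$ and martingales $J^m$ attached to the predictable jump times of $f^n*\nu^M$) chosen so that $f^n*\nu^M$ remains predictable, and remains the compensator of $f^n*\mu^M$, with respect to the reduced filtration $\Hb$. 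Only then is the corresponding coordinate process $\widetilde F^n$ an $\widetilde\F$--predictable process with $f^n*\mu^{\widetilde M}-\widetilde F^n$ a $\widetilde\P$--local martingale, whence $\widetilde F^n=f^n*\nu^{\widetilde M}$, and a monotone class argument gives \ref{T:can_embedding4} for general $f$. Your proposal omits this mechanism entirely, and without it item \ref{T:can_embedding4} --- the part of the theorem actually needed in Section~\ref{S:NK}, since the Novikov--Kazamaki functionals are built from $\nu^M$ --- does not follow.
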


Assume for the moment that Theorem~\ref{T:can_embedding} has been proved and recall from \citet{Perkowski_Ruf_2014} that $(\widetilde \Omega, \widetilde\Fcal, \widetilde\F, \widetilde\P)$ satisfies the assumptions of Theorem~\ref{T numeraire}.
Now, with an appropriate choice of the sequence $(H^n)_{n \in \N}$, Theorem~\ref{T:can_embedding} allows us, without loss of generality, to assume $(\widetilde \Omega, \widetilde\Fcal, \widetilde\F, \widetilde\P)$ as the underlying filtered space when proving the Novikov-Kazamaki type conditions. To illustrate the procedure, suppose $Z=\Ecal(M)$ is a nonnegative local martingale as in Section~\ref{S:NK}, satisfying Theorem~\ref{T:NK}\ref{T:NK:2} for some optional process $U$ that is extended locally $Z$--integrable (see Appendix~\ref{App:Z}). Without loss of generality we may assume $U$ is nondecreasing. We now apply Theorem~\ref{T:can_embedding}. By choosing the family $(H^n)_{n\in\N}$ appropriately, we can find an $\widetilde\F$--optional process $\widetilde U$ with $U=\widetilde U\circ\Phi$ almost surely, that is extended locally $\widetilde Z$--integrable, where $\widetilde Z=\Ecal(\widetilde M)$. Furthermore, we have
\[
\sup_{\widetilde\sigma \in \widetilde{\mathcal{T}}} \E_{\widetilde\P}\left[\widetilde Z_{\widetilde\sigma} f(\epsilon \widetilde N_{\widetilde\sigma}-\widetilde U_{\widetilde\sigma})\right] \le \sup_{\sigma \in \mathcal{T}} \E_\P\left[Z_\sigma f(\epsilon N_\sigma-U_\sigma)\right] < \infty,
\]
where $\widetilde\Tcal$ is the set of all bounded $\widetilde\F$--stopping times, and $\widetilde N$ is given by~\eqref{eq:N} with $M$ replaced by $\widetilde M$. By Theorem~\ref{T:NK}, the local $\widetilde\F$--martingale $\widetilde Z$ is a uniformly integrable martingale, and thus so is~$Z$. Transferring the reverse implication to the canonical space is done in similar fashion, using also Remark~\ref{R:Uspecs}.

We begin the proof of Theorem~\ref{T:can_embedding}  with a technical lemma.
\begin{lemma}[A canonical sub-filtration] \label{L:shrink}
Under the notation of this appendix, let $(f^n)_{n\in\N}$ be a collection of bounded measurable functions on $\R$, each supported in some compact subset of~$\R\setminus\{0\}$.
Then there exists  a countable family $(K^n)_{n \in \N}$ of $\F$--adapted  c\`adl\`ag  processes,  such that  the following properties hold, with $\Hb$ denoting the right-continuous modification of the filtration generated by $(M,f^n*\nu^{M}, H^n, K^n)_{n\in \N}$:
\begin{enumerate}
	\item\label{L:shrink:i} $\tau$ is foretellable with respect to $\Hb$.
	\item\label{L:shrink:ii}  $M$ is an $\Hb$--local martingale on $\lc0,\tau\lc$.
	\item\label{L:shrink:iii}  $f^n*\nu^{M}$ is indistinguishable from the $\Hb$--predictable compensator of $f^n*\mu^{M}$ for each~$n\in\N$.
\end{enumerate}
\end{lemma}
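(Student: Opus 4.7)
The plan is to assemble $(K^n)_{n\in\N}$ as a union of countably many subfamilies of c\`adl\`ag $\F$-adapted processes, each tailored to one of the three properties. Two standard encodings will be used throughout: an $\F$-stopping time~$\rho$ is recorded via the indicator process $\oo_{\lc\rho,\infty\lc}$, while an $\Fcal_{\rho-}$-measurable random variable~$\xi$ is recorded via the c\`adl\`ag $\F$-martingale $t\mapsto\E[\psi(\xi)\mid\Fcal_t]$ for a fixed bounded bijection $\psi\colon\R\to(-1,1)$, whose left-limit at the $\F$-predictable time~$\rho$ equals $\psi(\xi)$ and hence recovers~$\xi$.

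For \ref{L:shrink:i} and \ref{L:shrink:ii}, I would fix an $\F$-announcing sequence $(\tau_k)_{k\in\N}$ for~$\tau$, further localized so that each $M^{\tau_k}$ is a uniformly integrable $\F$-martingale, and add the indicators $\oo_{\lc\tau_k,\infty\lc}$ to the $K^n$. Then each~$\tau_k$ is an $\Hb$-stopping time, so~$\tau$ becomes $\Hb$-foretellable. Because~$M$ is $\Hb$-adapted (as a generator of~$\Hb$) and $\Hcal_s\subseteq\Fcal_s$, the tower property
\begin{equation*}
\E\left[M^{\tau_k}_t \mid \Hcal_s\right] = \E\left[\E\left[M^{\tau_k}_t \mid \Fcal_s\right] \mid \Hcal_s\right] = \E\left[M^{\tau_k}_s \mid \Hcal_s\right] = M^{\tau_k}_s,\qquad s\le t,
\end{equation*}
exhibits $M^{\tau_k}$ as a uniformly integrable $\Hb$-martingale, yielding \ref{L:shrink:ii}. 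The identical argument, applied to the $\F$-local martingale $f^n*\mu^M-f^n*\nu^M$---which is $\Hb$-adapted once $f^n*\nu^M$ appears as a generator of~$\Hb$ and $\mu^M$ is determined by~$M$---shows that this compensated integral is an $\Hb$-local martingale, after adding to the $K^n$ indicators of a further $\F$-localizing sequence.

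It remains to establish \ref{L:shrink:iii}, namely the $\Hb$-predictability of~$f^n*\nu^M$. Since $\mathrm{supp}(f^n)\subset\R\setminus\{0\}$ is compact, $f^n*\mu^M$ has only finitely many jumps on compact subsets of $\lc0,\tau\lc$ and $f^n*\nu^M$ is therefore a locally integrable finite-variation process. As an $\F$-predictable c\`adl\`ag process, its jumps are exhausted by a sequence of $\F$-predictable stopping times $(\rho_{n,k})_{k\in\N}$ with $\xi_{n,k}:=\Delta(f^n*\nu^M)_{\rho_{n,k}}\in\Fcal_{\rho_{n,k}-}$. For each $(n,k)$, I would add to the $K^n$ the indicators of an $\F$-announcing sequence for $\rho_{n,k}$, together with the c\`adl\`ag $\F$-martingale $W^{n,k}_t=\E[\psi(\xi_{n,k})\mid\Fcal_t]$. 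The first addition makes $\rho_{n,k}$ an $\Hb$-predictable stopping time, and the second ensures that $W^{n,k}_{\rho_{n,k}-}=\psi(\xi_{n,k})$ is $\Hcal_{\rho_{n,k}-}$-measurable, so that $\xi_{n,k}$ is as well. Consequently, $f^n*\nu^M=(f^n*\nu^M)_-+\sum_{k}\xi_{n,k}\oo_{\lc\rho_{n,k},\infty\lc}$ is $\Hb$-predictable, being the sum of a left-continuous $\Hb$-adapted process and $\Hb$-predictable jump contributions.

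The main obstacle is exactly this last step: $\Hb$-adaptedness of a c\`adl\`ag process is strictly weaker than $\Hb$-predictability, so the $\F$-predictability of~$f^n*\nu^M$---which amounts to $\Fcal_{\rho-}$-measurability of its jumps at $\F$-predictable times~$\rho$---must be transferred to~$\Hb$ by explicitly encoding both those predictable jump times and their jump sizes via the auxiliary c\`adl\`ag $\F$-adapted processes described above.
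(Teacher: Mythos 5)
Your argument follows the paper's proof essentially step for step: indicators of an announcing/localizing sequence give \ref{L:shrink:i} and \ref{L:shrink:ii}, and \ref{L:shrink:iii} is reduced to the $\Hb$--predictability of $f^n*\nu^M$ by covering its jump times with $\F$--predictable times, making those times $\Hb$--foretellable, and recovering each jump size from the left limit of an auxiliary $\F$--martingale (the paper uses $J^m_t=\E[\Delta X_{\sigma_m}\1{\sigma_m<\infty}\mid\Fcal_t]$ directly, boundedness of the jumps making your bijection $\psi$ unnecessary but harmless). One slip in your final display: the decomposition must use the graph indicator, $f^n*\nu^M=(f^n*\nu^M)_-+\sum_k\xi_{n,k}\,(\oo_{\lc\rho_{n,k},\infty\lc}-\oo_{\rc\rho_{n,k},\infty\lc})=(f^n*\nu^M)_-+\sum_k\xi_{n,k}\oo_{\lc\rho_{n,k}\rc}$, not $\oo_{\lc\rho_{n,k},\infty\lc}$, since $(f^n*\nu^M)_-$ already contains all jumps occurring strictly before the current time; with that correction your argument coincides with the paper's.
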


\begin{proof}
Let $(\tau_m)_{m\in\N}$ be a localizing sequence for $M$ announcing $\tau$. Including the c\`adl\`ag  $\F$--adapted processes $\oo_{\lc\tau_m,\infty\lc}$ in the family $(K^n)_{n\in\N}$ makes  $\tau_m$ an $\Hb$--stopping time for each $m \in \N$, and guarantees \ref{L:shrink:i}  and \ref{L:shrink:ii} . 

Next, fix $n\in\N$. Then, the $\F$--martingale $f^n*\mu^{M}-f^n*\nu^{M}$ is clearly $\Hb$--adapted and hence an $\Hb$--martingale. Thus  \ref{L:shrink:iii} follows if the $\F$--predictable  process $X = f^n*\nu^{M}$ is indistinguishable from an $\Hb$--predictable process. Let $(\sigma_m)_{m\in\N}$ be a sequence of $\F$--predictable times with pairwise disjoint graphs covering the jump times of $X$; see Proposition~I.2.24 in \citet{JacodS}. Since $X$ has bounded jumps, we may define a martingale $J^m$ as the right-continuous modification of $(\E[ \Delta X_{\sigma_m}\1{\sigma_m<\infty}\mid\Fcal_t])_{t \geq 0}$, for each $m \in \N$. Let also $(\rho_{m,k})_{k\in\N}$ be an announcing sequence for $\sigma_m$. We then have
\[
\lim_{k\to \infty} J^m_{\rho_{m,k}}\oo_{\rc\rho_{m,k},\infty\lc} = J^m_{\sigma_m-}\oo_{\lc\sigma_m,\infty\lc} = \Delta X_{\sigma_m}\oo_{\lc\sigma_m,\infty\lc}.
\]
Thus, if we include the processes $J^m$ and $\oo_{\lc\rho_{m,k},\infty\lc}$ for $m,k \in \N$ in the family $(K^n)_{n \in \N}$, then each $\Delta X_{\sigma_m}\oo_{\lc\sigma_m,\infty\lc}$ becomes the almost sure limit of $\Hb$--adapted left-continuous processes and hence $\Hb$--predictable up to indistinguishability.  The decomposition
\[
X=X_-+\sum_{m\in\N}\left(\Delta X_{\sigma_m}\oo_{\lc\sigma_m,\infty\lc} - \Delta X_{\sigma_m}\oo_{\rc\sigma_m,\infty\lc}\right)
\]
then implies that, up to indistinguishability, $X$ is a sum of $\Hb$--predictable processes. Repeating the same construction for each $n\in\N$ yields  \ref{L:shrink:iii}. 
\end{proof}

\begin{proof}[Proof of Theorem~\ref{T:can_embedding}]
Let $(M,f^n*\nu^{M}, H^n, K^n)_{n\in \N}$ and $\Hb$ be as in Lemma~\ref{L:shrink}. There exists an $\Hb$--stopping time~$T$ with $\P(T<\infty) = 0$ such that the paths $(M(\omega),f^n*\nu^{M}(\omega), H^n(\omega), K^n(\omega))_{n\in \N}$ are c\`adl\`ag on $[0,T(\omega) \wedge \tau(\omega) )$ for all $\omega \in \Omega$.  We now check that the measurable map $\Phi: (\Omega, \Fcal) \rightarrow (\widetilde{\Omega}, \widetilde {\Fcal})$ given by
\[
\Phi(\omega)(t) = \left( M_t(\omega),f^n*\nu^M_t(\omega),H^n_t(\omega),K^n_t(\omega)\right)_{n \in \N}\1{t<T(\omega) \wedge \tau(\omega)} + \Delta \1{t\ge T(\omega) \wedge \tau(\omega)},
\]
along with the obvious choice of processes $\widetilde{M}$ and $(\widetilde{H}^n)_{n \in \N}$,
satisfies the conditions of the theorem. The statements in \ref{T:can_embedding1} and~\ref{T:can_embedding2} are clear since $\zeta\circ\Phi=T\wedge\tau$. For \ref{T:can_embedding3}, one uses in addition that $\Phi^{-1}(\widetilde\Fcal_t)\subset\Fcal_t$ for all $t\ge0$ due the $\Fcal_t/\widetilde\Fcal_t$--measurability of~$\Phi$.

We now prove \ref{T:can_embedding4}. For each $n \in \N$, let $\widetilde{F}^n$ denote the coordinate process in the canonical space corresponding to $f^n *\nu^M$.  Then, by Lemma~\ref{L:shrink}, for each $n \in \N$, $\widetilde{F}^n$ is indistinguishable from an $\widetilde {\F}$--predictable process and, due to the definition of $\widetilde \P$, the process $f^n * \mu^{\widetilde{M}} - \widetilde{F}^n$ is a $\widetilde{\P}$--local martingale. Here, $\mu^{\widetilde{M}}$ denotes the jump measure of the c\`adl\`ag process $\widetilde M$. Thus $\widetilde{F}^n$ is indistinguishable from $f^n*\nu^{\widetilde{M}}$, which gives $f^n * \nu^M = (f^n * \nu^{\widetilde M}) \circ \Phi$ on $\lc 0, \tau\lc$, $\P$--almost surely, for each $n \in \N$. Choosing $(f^n)_{n \in \N}$ to be a measure-determining family along with a monotone class argument thus yields \ref{T:can_embedding4}.

For~\ref{T:can_embedding5}, let $\widetilde\sigma$ be an $\widetilde\F$--stopping time. Then $\sigma=\widetilde\sigma\circ\Phi$ is an $\F$-stopping times, since
\[
\{\sigma\le t\}=\Phi^{-1}(\{\widetilde\sigma\le t\})\in\Phi^{-1}(\widetilde\Fcal_t)\subset\Fcal_t
\]
for all $t\ge0$. Thus if $\widetilde U=\oo_{\lc0,\widetilde\sigma\lc}$, then $U=\widetilde U\circ\Phi=\oo_{\lc0,\sigma\lc}$ is $\F$--optional. The result now follows by a monotone class argument.
\end{proof}

\setlength{\bibsep}{0.0pt}
\bibliography{aa_bib}{}
\bibliographystyle{apalike}

\end{document}